\documentclass[pdflatex,sn-mathphys-num]{sn-jnl}

\usepackage{graphicx}%
\usepackage{multirow}%
\usepackage{amsmath,amssymb,amsfonts}%
\usepackage{amsthm}%
\usepackage[title]{appendix}%
\usepackage{xcolor}%
\usepackage{textcomp}%
\usepackage{manyfoot}%
\usepackage{booktabs}%
\usepackage{algorithm}%
\usepackage{algorithmicx}%
\usepackage{algpseudocode}%
\usepackage{listings}%
\usepackage{framed}
\usepackage{nameref}
\usepackage{orcidlink}
\usepackage{amsmath, amsthm, amssymb, amsfonts, mathrsfs}
\usepackage{enumerate}
\usepackage{geometry}
\usepackage{mathtools}
\usepackage{hyperref}
\usepackage{cleveref}

\usepackage{ esint }

\theoremstyle{plain}
\newtheorem{theorem}{Theorem}[section]
\newtheorem{proposition}[theorem]{Proposition}
\newtheorem{lemma}[theorem]{Lemma}
\newtheorem{corollary}[theorem]{Corollary}
\theoremstyle{definition}
\newtheorem{definition}[theorem]{Definition}
\newtheorem{rem}[theorem]{Remark}
\newtheorem{example}[theorem]{Example}

\DeclareMathOperator{\ext}{\textup{ext}}

\DeclareMathOperator{\inter}{\textup{int}}
\DeclareMathOperator{\LO}{\textup{span}}
\DeclareMathOperator{\relint}{\textup{relint}}

\DeclareMathOperator{\rank}{rank}

\DeclareMathOperator{\dist}{\textup{dist}}
\DeclareMathOperator{\conv}{\textup{conv}}
\DeclareMathOperator{\aff}{\textup{aff}}
\DeclareMathOperator{\diam}{\textup{diam}}
\newcommand{\dH}[1]{\,\mathrm d\mathcal{H}^{#1}}
\newcommand{\dlambda}[1]{\,\mathrm d\lambda^{#1}}

\begin{document}

\title[Anisotropic Minkowski Content for Countably $\mathcal{H}^k$-rectifiable Sets]{Anisotropic Minkowski Content for Countably $\mathcal{H}^k$-rectifiable Sets}

\author*[]{\fnm{Filip} \sur{Fryš}\,\orcidlink{0009-0000-2134-0109}}\email{filip.frys@matfyz.cuni.cz}

\affil[]{ \orgname{Charles University, Faculty of Mathematics and Physics}, \orgaddress{\street{Sokolovsk\'{a} 83}, \city{Prague 8}, \postcode{186 75}, \state{Czech Republic}}}

\abstract{
We study anisotropic Minkowski content for lower-dimensional rectifiable sets. First, we prove that, for every convex body \(C\subseteq\mathbb R^n\), the \(k\)-dimensional \(C\)-anisotropic Minkowski content of every compact \(k\)-rectifiable set exists. We show that it is given by an integral involving the \((n-k)\)-dimensional volumes of the projections of \(C\) onto the approximate normal spaces of the set.

We then establish the same formula for closed countably \(\mathcal H^k\)-rectifiable sets of finite \(\mathcal H^k\)-measure satisfying an AFP-\(k\)-condition relative to the linear span of \(C\), provided that the condition is witnessed by a finite Radon measure.

Finally, we prove that, for a countably \(\mathcal H^k\)-rectifiable set, if the formula holds for one full-dimensional convex body, then it holds for every full-dimensional convex body.
}

\keywords{anisotropic lower-dimensional Minkowski content, countably $\mathcal{H}^k$-rectifiable set, convex body}

\maketitle\section*{Acknowledgments}
I would like to express my gratitude to my supervisor Jan Rataj for his support and valuable advice. I am also grateful to the anonymous reviewer for a careful reading of the manuscript and for several helpful comments and suggestions.
I gratefully acknowledge the financial support provided by Charles University under grants
PRIMUS/24/SCI/009 and GAUK 34126.

\section*{Introduction}\label{secIntro}

This paper investigates anisotropic versions of lower-dimensional \emph{Minkowski content} and establishes a generalization of Federer's classical isotropic result for $k$-rectifiable compact sets \cite[Theorem~3.2.39]{federer}.

Let $S\subseteq\mathbb{R}^n$ be a nonempty compact set. For $r>0$, the \emph{(isotropic) tubular $r$-neighborhood} of $S$ is defined by
\[
S_r \coloneq \{x\in\mathbb{R}^n : \dist(x,S)\le r\}.
\]
Equivalently, $S_r$ can be written as the Minkowski sum
\[
S_r = S \oplus B(0,r) \coloneq \bigcup_{s\in S} B(s,r).
\]

The volume $\lambda^n(S_r)$ and its asymptotic behavior as $r\to0_+$ have been studied extensively. To capture this behavior, one introduces the \emph{$k$-dimensional Minkowski content} of $S$,
\[
\mathcal{M}^k_{B(0,1)}(S) \coloneq 
\lim_{r\to0_+}\frac{\lambda^n(S_r)}{\omega_{n-k} r^{n-k}},
\]
whenever the limit exists, where $\omega_{n-k}$ denotes the volume of the $(n-k)$-dimensional Euclidean unit ball.

For certain classes of sets, the volume $\lambda^n(S_r)$ admits a polynomial expansion in $r$. If $K$ is a convex body, that is, a nonempty compact convex set, then $$\lambda^n(K_r)=\sum_{j=0}^n{n \choose j} V_n\big(K[j], B(0, 1)[n-j]\big)r^{n-j}, \qquad r>0,$$ where the coefficients $V_n\big(K[j], B(0, 1)[n-j]\big)$, $j\in \{0, \dots, n\}$, are the mixed volumes of $j$ copies of $K$ and $n-j$ copies of $B(0, 1)$ (see, for instance, \cite[Chapter~5]{rolf}). 

Similarly, if $S$ is a compact smooth $k$-dimensional submanifold without
boundary embedded in $\mathbb{R}^n$, Weyl's tube formula gives a polynomial
expansion of $\lambda^n(S_r)$ in powers of $r$ for all sufficiently small
$r>0$; see \cite{We}. 

In both cases, the above limit therefore exists.

\medskip

In a fundamental result, Federer \cite[Theorem~3.2.39]{federer} proved that if $S$ is compact and $k$-rectifiable, then $\mathcal{M}^k_{B(0,1)}(S)$ exists and coincides with the $k$-dimensional Hausdorff measure $\mathcal{H}^k(S)$.

This result was later extended by Ambrosio, Fusco, and Pallara \cite[Theorem~2.104]{ambrosio} to countably $\mathcal{H}^k$-rectifiable compact sets. More precisely, if $S$ is countably $\mathcal{H}^k$-rectifiable and compact, and there exist $\gamma>0$ and a probability Radon measure $\mu$, absolutely continuous with respect to $\mathcal{H}^k$, such that
\begin{equation}\label{Eq30}
\mu\big(B(x,r)\big) \geq \gamma r^k, \qquad x\in S,\ r\in(0,1),
\end{equation}
then the limit $\mathcal{M}^k_{B(0,1)}(S)$ exists and equals $\mathcal{H}^k(S)$.

The aim of this work is to leave the isotropic framework and study an \emph{anisotropic} counterpart of these results. Given a convex body $C\subseteq\mathbb{R}^n$ with $0\in\relint(C)$, we define the \emph{$C$-anisotropic tubular neighborhood} of $S$ by

$$S \oplus rC \coloneq \bigcup_{s\in S} (s+rC), \qquad r>0.$$

Accordingly, the \emph{$k$-dimensional $C$-anisotropic Minkowski content} of $S$ is defined as

$$\mathcal{M}^k_C(S) \coloneq 
\lim_{r\to0_+}\frac{\lambda^n(S\oplus rC)}{\omega_{n-k} r^{n-k}},$$ whenever the limit exists.

So far, the existence of $\mathcal{M}^k_C(S)$ has been well understood only in the case $k=n-1$ and for convex bodies $C$ containing the origin in their interior. In this setting, Lussardi and Villa \cite[Theorem 3.4]{Villa2} proved that condition \eqref{Eq30} guarantees the existence of $\mathcal{M}^{n-1}_C(S)$ for countably $\mathcal{H}^{n-1}$-rectifiable compact sets $S$, and that

$$\mathcal{M}^{n-1}_C(S)
=
\frac12 \int\limits_S \bigl(h_C(\nu_S)+h_C(-\nu_S)\bigr)\,\dH{n-1},$$
where
$$h_C(x)\coloneq \max_{y\in C} x\cdot y$$
denotes the support function of $C$, and $\nu_S(x)$ is a unit normal vector to the approximate tangent space of $S$ at $x$ for $\mathcal{H}^{n-1}$-almost every $x\in S$.

More recently, Kiderlen and Rataj \cite[Theorem 8]{Rataj5} extended this result to lower-dimensional structuring elements. They introduced a condition analogous to \eqref{Eq30}: if $S$ is a countably $\mathcal{H}^{n-1}$-rectifiable compact set with $\mathcal{H}^{n-1}(S)<\infty$, $L$ is an $m$-dimensional linear subspace of $\mathbb{R}^n$, and there exist $\gamma>0$ and a probability Radon measure $\mu$, absolutely continuous with respect to $\mathcal{H}^{n-1}$, such that
\begin{equation}\label{Eq31}
\mu\big(B(x,r)\big) \geq \gamma r^{m-1}\mathcal{H}^{n-m}\Big(P_{L^\perp}\big(S\cap B(x,r)\big)\Big),
\qquad x\in S,\ r\in(0,1),
\end{equation}
then for any convex body $C\subseteq L$ the limit $\mathcal{M}^{n-1}_C(S)$ exists and admits the same integral representation. Here, $P_{L^{\perp}}$ stands for the orthogonal projection onto the linear subspace $L^{\perp}$.

\medskip

In this paper we show that, for every $k\in\{1, \dots, n-1\}$ and every convex body $C$, the $C$-anisotropic $k$-dimensional Minkowski content of a $k$-rectifiable compact set $S$ exists and is given by
\begin{equation}\label{Eq40}
\mathcal{M}^k_C(S)
=
\frac{1}{\omega_{n-k}}
\int\limits_S \mathcal{H}^{n-k}\big(P_{(\textup{T}^k_xS)^{\perp}}(C)\big)\dH{k}(x),
\end{equation}
where $\textup{T}^k_xS$ denotes the \emph{approximate tangent space to $S$ at $x$.}

We further introduce a condition extending \eqref{Eq30} and \eqref{Eq31} that ensures the existence of $\mathcal{M}^k_C(S)$ for countably $\mathcal{H}^k$-rectifiable closed sets $S$ with $\mathcal{H}^k(S)<\infty$, and arbitrary convex bodies $C$. Namely, we show that if there exist a finite Radon measure $\mu$, absolutely continuous with respect to $\mathcal{H}^k$, and a constant $\gamma>0$ such that $$\mu\big(B(x, r)\big)\geq\gamma r^{k+m-n}\mathcal{H}^{n-m}\Big(P_{L^{\perp}}\big(S\cap B(x, r)\big)\Big),\quad\text{$x\in S$, $r\in (0, 1)$,}$$ where $L$ is the $m$-dimensional subspace such that $\LO(C)=L$, then the limit $\mathcal{M}^k_C(S)$ exists and coincides with the integral $$\frac{1}{\omega_{n-k}}
\int\limits_S \mathcal{H}^{n-k}\big(P_{(\textup{T}^k_xS)^{\perp}}(C)\big)\dH{k}(x).$$

Finally, we show that if \eqref{Eq40} holds for some convex body with nonempty interior, then it holds for all convex bodies with nonempty interior.

\medskip

The rest of the paper unfolds as follows.

Section \ref{secPrelim} introduces the notation used throughout and gathers the essential background from geometric measure theory and convex geometry. In particular, we recall the basic properties of convex bodies and mixed volumes.

In Section \ref{secDef}, we define the $s$-dimensional anisotropic Minkowski content. We first establish the result for convex bodies, which serves as a guiding example, and then extend it to sets that arise as compact subsets of $C^{1}$ $k$-graphs.

Section \ref{count} addresses the general rectifiable case. Here we show that the $k$-dimensional $C$-anisotropic Minkowski content of a countably $\mathcal{H}^{k}$-rectifiable closed set with finite $\mathcal{H}^k$-measure exists and attains the expected value, provided an appropriate form of the AFP-condition is satisfied.

Finally, in Section \ref{secFurther}, we prove that if the $k$-dimensional $C$-anisotropic Minkowski content of a countably $\mathcal{H}^k$-rectifiable set $S$ exists and takes the correct value for one full-dimensional convex body $C$, then the same conclusion holds for every full-dimensional convex body.

\section{Preliminaries}
\label{secPrelim}
\subsection{Notation}
 Let $n\in\mathbb{N}$ (and so $n\geq1$). For $x, y\in\mathbb{R}^n$, we denote by $x\cdot y=\sum_{i=1}^{n}x_i y_i$ the \textit{Euclidean inner product of $x$ and $y$}, and the symbol $|\cdot|$ stands for the norm induced by this inner product. By $\Vert\cdot\Vert$ we denote the standard operator norm on the space of all linear endomorphisms of $\mathbb{R}^n$. 
 
 We let $\mathbb{N}_0\coloneqq\mathbb{N}\cup\{0\}.$

If $A, B\subseteq\mathbb{R}^n$, then: \begin{itemize}
    \item The symbol $A\oplus B\coloneq \{a+b: (a, b)\in A\times B\}$ stands for the \textit{Minkowski sum of $A$ and $B$}. 
    \item The \textit{$r$-multiple of $A$} is denoted by $rA\coloneq\{ra;\;a\in A\}$, where $r\in\mathbb{R}$.
    \item The \textit{symmetric difference of $A$ and $B$} is denoted by $A\Delta B\coloneq (A\setminus B)\cup(B\setminus A)$.
    \item The \textit{orthogonal complement of $A$} will be denoted by  $A^{\perp}$, the \textit{linear span of $A$} by $\LO (A)$ and the \textit{affine span of $A$} by $\aff(A)$. If $A$ is a linear subspace of $\mathbb{R}^n$, we denote by $\dim(A)$ the \emph{dimension of $A$.}
    \item We denote the \textit{interior of $A$} by $\inter (A),$ the \textit{exterior of $A$} by $\ext (A)$, the \emph{closure of $A$} by $\overline{A}$, and the \textit{boundary of $A$} 
    by $\partial A.$ If $A$ is a subset of a metric space $X$, we denote its interior and boundary relative to $X$ by
$\inter_X(A)$ and
$\partial_X A$, respectively. Further, we define $\relint(A)\coloneq\inter_{\aff(A)}(A).$
    \item If $A\neq\emptyset$, then $\dist(\cdot, A)$ denotes the classical \textit{Euclidean distance to $A$} and $\diam(A)$ denotes the \textit{diameter of $A$.}
    \item The symbol $\lambda^{n}$ denotes the \textit{$n$-dimensional outer Lebesgue measure}; we use the same notation for its restriction to the Lebesgue measurable sets. Further, $\mathcal{H}^{s}$ denotes the \textit{$s$-dimensional Hausdorff measure}.
    \item The \textit{characteristic function of $A$} will be denoted by  $\chi_A$.

\end{itemize}

Further, let $B(x, r)\equiv B_{\mathbb{R}^n}(x, r)$ denote the closed ball centered at $x$ with radius $r>0$. For $s\geq0$, we write
$$\omega_s\coloneq \frac{\pi^{s/2}}{\Gamma\left(\frac{s}{2}+1\right)},$$ where $\Gamma$ stands for the standard gamma function. For $s\in\mathbb{N}_0$, this is the volume of the unit ball in
$\mathbb R^s$. 

We set $\mathbb{S}^{n-1}\coloneq\partial B(0, 1).$ The \emph{cube centered at $x\in\mathbb{R}^n$ with side length $2s>0$} is defined as $Q_{{\mathbb{R}^n}}(x, s)\coloneq x+[-s, s]^n$. 

The set of all linear subspaces of
$\mathbb{R}^n$ of dimension $k$ will be denoted by $G_k(\mathbb{R}^n).$ If $L\in G_k(\mathbb{R}^n)$, we denote by $B_L(x, r)$ the set $B(x, r)\cap L$ and by $P_{L}$ the orthogonal projection onto $L$.

The \emph{determinant} will be denoted by $\det$.

 Let $\mu$ be a nonnegative measure and let $B$ be a $\mu$-measurable set. The symbol $\mu\big|_{B}$ stands for the \textit{restriction of $\mu$ to $B$.} We abbreviate \textit{almost everywhere} by a.e. and \textit{almost all} by a.a. 

 The space $\mathcal{C}^{\infty}_c(\Omega)$ consists of all infinitely differentiable compactly supported functions on an open set  $\Omega\subseteq\mathbb{R}^n$. The space $\mathcal{C}(A)$ consists of all continuous  functions from $A\subseteq\mathbb{R}^n$ to $\mathbb{R}$. We say that a function $f\colon A\to\mathbb{R}^l$, where $A\subseteq\mathbb{R}^n$, is \emph{of class $\mathcal{C}^k$} provided that there exist $\Omega\supseteq A$ open and a function $g\in\mathcal{C}^k(\Omega; \mathbb{R}^l)$, that is, a $k$-times continuously differentiable function on $\Omega$ to $\mathbb{R}^l$, such that the restriction of $g$ to $A$---$g|_{A}$ is equal to $f$.

    The symbol $D$ will be used for the \textit{total derivative} (and we will identify the derivative and the corresponding matrix that represents the derivative).

    If $\mathcal{V}$ is a collection of subsets of $\mathbb{R}^n$, we define $$\bigcup\mathcal{V}\coloneq\bigcup\{V:V\in\mathcal{V}\}.$$

 \subsection{Geometric Measure Theory}
In this subsection, we formulate some useful results from geometric measure theory. Let us start with the following definition. 
\begin{definition}
        Let $k, n\in\mathbb{N}$ satisfy $1\leq k\leq n$, $f\colon\mathbb{R}^k\to\mathbb{R}^n$ and 
        $g\colon\mathbb{R}^n\to\mathbb{R}^k$ be Lipschitz. We define (almost everywhere) the \textit{Jacobian determinant of $f$ at $x\in\mathbb{R}^k$} and
        the \textit{Jacobian determinant of $g$ at $y\in\mathbb{R}^n$} as \begin{align*}
            J_kf(x)\coloneq \sqrt{\det\big([Df(x)]^{\top}[D f(x)]\big)}&&\text{and}&&J^kg(y)\coloneq \sqrt{\det\big([Dg(y)][Dg(y)]^{\top}\big)}.
        \end{align*}
\end{definition}

\begin{theorem}[Area Formula, \protect{\cite[Theorem 3.2.3]{federer}}] Let  $f\colon\mathbb{R}^k\to\mathbb{R}^n$ be Lipschitz and $E\subseteq\mathbb{R}^k$ be $\mathcal{H}^k$-measurable, where $k, n\in\mathbb{N}$ satisfy $1\leq k\leq n$. Then the map $y\mapsto \mathcal{H}^{0}\big(f^{-1}(\{y\})\cap E\big)$ is $\mathcal{H}^k$-measurable and $$\int\limits_{E}J_kf(x)\dH{k}(x)=\int\limits_{\mathbb{R}^n}\mathcal{H}^{0}\big(f^{-1}(\{y\})\cap E\big)\dH{k}(y).$$
    
\end{theorem}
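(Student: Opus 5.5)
The plan follows the classical route. First verify the formula for linear maps. Then use Rademacher's theorem to cut $E$ into countably many pieces on which $f$ is almost affine and injective. Finally sum over the pieces, with a separate argument for the set where the Jacobian vanishes.

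\emph{Step 1 (linear case).} Let $T\colon\mathbb{R}^k\to\mathbb{R}^n$ be linear. If $\rank T<k$, then $T(E)$ lies in a lower-dimensional subspace, so $\mathcal{H}^k(T(E))=0$, and also $J_kT=0$. If $\rank T=k$, use the polar decomposition $T=O\circ S$, where $S=(T^{\top}T)^{1/2}$ is a symmetric automorphism of $\mathbb{R}^k$ and $O$ is a linear isometry of $\mathbb{R}^k$ into $\mathbb{R}^n$. Since $\mathcal{H}^k$ is invariant under isometries and coincides with $\lambda^k$ on $\mathbb{R}^k$, the change of variables for $\lambda^k$ gives
\[
\mathcal{H}^k(T(E))=|\det S|\,\lambda^k(E)=J_kT\,\mathcal{H}^k(E).
\]
Because $T$ is injective, this is exactly the formula.

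\emph{Step 2 (measurability).} Reduce to $E$ Borel (even $\sigma$-compact) by removing an $\mathcal{H}^k$-null set; its image under the Lipschitz map $f$ is again $\mathcal{H}^k$-null. Images of compact sets are compact, so $f(F)$ is $\mathcal{H}^k$-measurable for every $\mathcal{H}^k$-measurable $F$. For the dyadic partitions $\mathcal{Q}_j$ of $\mathbb{R}^k$ into half-open cubes of side $2^{-j}$ we have
\[
\mathcal{H}^0\big(f^{-1}(\{y\})\cap E\big)=\lim_{j\to\infty}\sum_{Q\in\mathcal{Q}_j}\chi_{f(E\cap Q)}(y),
\]
a monotone limit of measurable functions. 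The same identity combined with monotone convergence reduces the formula to sets on which $f$ is injective: if the formula holds for each $E\cap Q$ with $Q$ small enough that $f$ is injective there, then summing and passing to the limit gives the general case. The ``small enough'' will be supplied by the decomposition in Step 3.

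\emph{Step 3 (decomposition, the main obstacle).} By Rademacher's theorem $Df$ exists a.e. Let $Z=\{x\in E: J_kf(x)=0\}$ and $E^+=E\setminus Z$. Fix $t>1$. I would prove Federer's lemma: $E^+$ is covered, up to a null set, by countably many Borel sets $E_i$, each paired with an injective linear map $T_i\colon\mathbb{R}^k\to\mathbb{R}^n$, such that:
\begin{itemize}
\item $f|_{E_i}$ is injective;
\item $\operatorname{Lip}\big(f\circ T_i^{-1}|_{T_i(E_i)}\big)\le t$ and $\operatorname{Lip}\big(T_i\circ (f|_{E_i})^{-1}\big)\le t$;
\item $t^{-k}J_kT_i\le J_kf\le t^kJ_kT_i$ on $E_i$.
\end{itemize}
To build them, take a countable dense family of injective linear maps $T$ and rational $\delta>0$. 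Consider the sets of points $x$ where $Df(x)$ is close to $T$ and where $|f(z)-f(x)-Df(x)(z-x)|\le \delta|z-x|$ for $|z-x|<\delta$. Intersect these with a fine Borel partition of diameter $<\delta$. The uniform differentiability estimate on each piece, together with $|Tv|\ge c|v|$, yields the two-sided Lipschitz bounds and hence injectivity.

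Given the decomposition, Step 1 gives for each $i$
\[
t^{-2k}\int_{E_i}J_kf\dH{k}\le t^{-k}J_kT_i\,\mathcal{H}^k(E_i)\le\mathcal{H}^k(f(E_i))\le t^{k}J_kT_i\,\mathcal{H}^k(E_i)\le t^{2k}\int_{E_i}J_kf\dH{k}.
\]
Summing over $i$, using injectivity and Step 2, and letting $t\to1_+$ gives the formula on $E^+$.

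\emph{Step 4 (critical set).} It remains to show $\mathcal{H}^k(f(Z))=0$. For $\varepsilon>0$ consider $g_\varepsilon(x)=(f(x),\varepsilon x)\in\mathbb{R}^{n+k}$. This map is injective with $J_kg_\varepsilon>0$ everywhere, and $J_kg_\varepsilon\le C\varepsilon$ on $Z$ with $C$ depending only on $\operatorname{Lip}f$. Steps 1 to 3 applied to $g_\varepsilon$ give
\[
\mathcal{H}^k(g_\varepsilon(Z\cap B))\le C\varepsilon\,\lambda^k(B)
\]
for bounded $B$. Projecting onto the first factor is $1$-Lipschitz, so $\mathcal{H}^k(f(Z\cap B))\le C\varepsilon\lambda^k(B)$, which tends to $0$ as $\varepsilon\to0_+$. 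Exhausting $\mathbb{R}^k$ by balls, both sides of the formula vanish on $Z$, and this completes the argument.
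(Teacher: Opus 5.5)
Your proposal is correct and is essentially the standard proof of the result the paper cites, Federer's Lemma 3.2.2 and Theorem 3.2.3 (the same route appears in Evans--Gariepy); the paper itself gives no proof and only quotes Federer. That proof consists of the linear case, a countable Borel decomposition of $\{J_kf>0\}$ into pieces on which $f$ is $t$-bi-Lipschitz relative to a fixed injective linear map, and the map $x\mapsto(f(x),\varepsilon x)$ to show $\mathcal{H}^k\big(f(\{J_kf=0\})\big)=0$. Two small points to tidy: make the pieces $E_i$ pairwise disjoint by replacing $E_i$ with $E_i\setminus\bigcup_{j<i}E_j$, which keeps all three properties, since the summation step needs $\sum_i\chi_{f(E_i)}(y)=\mathcal{H}^0\big(f^{-1}(\{y\})\cap E^+\big)$ for $\mathcal{H}^k$-a.e.\ $y$; and note that the dyadic cubes serve only for measurability, not for injectivity, which comes solely from the decomposition in Step 3.
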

\begin{theorem}[Coarea Formula, \protect{\cite[Theorem 3.2.11]{federer}}]  Let  $f\colon\mathbb{R}^n\to\mathbb{R}^k$ be Lipschitz and $E\subseteq\mathbb{R}^n$ be $\mathcal{H}^n$-measurable, where $k, n\in\mathbb{N}$ satisfy $1\leq k\leq n$. Then the map $y\mapsto \mathcal{H}^{n-k}\big(f^{-1}(\{y\})\cap E\big)$ is $\mathcal{H}^k$-measurable and $$\int\limits_{E}J^kf(x)\dH{n}(x)=\int\limits_{\mathbb{R}^k}\mathcal{H}^{n-k}\big(f^{-1}(\{y\})\cap E\big)\dH{k}(y).$$
\end{theorem}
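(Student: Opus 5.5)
This is the classical coarea formula, and I would prove it in three steps: the linear case, then a piecewise-linear approximation of $f$, and finally a separate treatment of the set where the Jacobian vanishes.

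\textbf{Linear case.} First let $f=A$ be a linear map $\mathbb{R}^n\to\mathbb{R}^k$ of rank $k$. Write $\mathbb{R}^n=(\ker A)^\perp\oplus\ker A$. The restriction $A|_{(\ker A)^\perp}$ is a linear isomorphism onto $\mathbb{R}^k$, and its determinant has absolute value $\sqrt{\det(AA^\top)}=J^kA$. Fubini's theorem on this orthogonal splitting then gives
\[
J^kA\,\lambda^n(E)=\int_{\mathbb{R}^k}\mathcal{H}^{n-k}\big(A^{-1}(\{y\})\cap E\big)\dH{k}(y).
\]
Here each fibre $A^{-1}(\{y\})$ is a translate of $\ker A$, so its Hausdorff measure is Lebesgue measure on that affine plane. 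If $\rank A<k$, then $J^kA=0$ and $A(\mathbb{R}^n)$ is $\mathcal{H}^k$-null, so both sides vanish.

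\textbf{Measurability and an a priori bound.} Next I establish the Eilenberg-type inequality
\[
\int^{*}_{\mathbb{R}^k}\mathcal{H}^{n-k}\big(f^{-1}(\{y\})\cap E\big)\dH{k}(y)\le c(n,k)\,\mathrm{Lip}(f)^k\,\lambda^n(E).
\]
The proof covers $E$ by small sets of diameter $\delta$ and notes that $f$ maps each of them into a set of diameter at most $\mathrm{Lip}(f)\delta$. Summing the indicator functions of these images with weights $\delta^{n-k}$ and applying Fatou's lemma gives the bound. The same covering argument, run with compact sets $E$, shows that $y\mapsto\mathcal{H}^{n-k}(f^{-1}(\{y\})\cap E)$ is upper semicontinuous in a suitable sense, hence Borel. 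This extends to Borel $E$ by monotone class arguments, and to $\mathcal{H}^n$-measurable $E$ by the inequality, which controls null sets.

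\textbf{Approximation on the regular set.} Let $G=\{x: Df(x)\text{ exists and } J^kf(x)>0\}$. Using Rademacher's theorem together with a Lusin/Whitney-type argument (as in the proof of the area formula), I partition $G$, up to a null set, into countably many Borel pieces $E_i$ with the following property: there is a surjective linear $A_i$ such that $f=A_i\circ\psi_i$ on $E_i$, where $\psi_i$ is a bi-Lipschitz map with constants in $[t^{-1},t]$ and $\psi_i$ is close to the identity at the infinitesimal level. One way to arrange this is to complete $f$ to a locally injective map $x\mapsto(f(x),P_{\ker A_i}x)$. Transporting the linear case through $\psi_i$ distorts $\mathcal{H}^n$ and $\mathcal{H}^{n-k}$ by factors at most $t^{n}$. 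Summing over $i$ and letting $t\to1$ gives the formula on $G\cap E$.

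\textbf{Main obstacle: the degenerate set.} I expect the set $Z=\{J^kf=0\}$ to be the hardest step, since there we must show that the right-hand side vanishes. I would use the perturbation trick. Set $g_\varepsilon(x,z)=f(x)+\varepsilon z$ on $\mathbb{R}^n\times\mathbb{R}^k$. This map has $J^kg_\varepsilon\ge\varepsilon^k$ everywhere and $J^kg_\varepsilon\le C\varepsilon$ on $Z\times B(0,1)$. Applying the regular case to $g_\varepsilon$ on $Z\times B(0,1)$ gives a total of order $\varepsilon\,\lambda^n(Z)$. On the other hand, by the Eilenberg inequality applied to the slices, that total bounds from below a constant times $\int\mathcal{H}^{n-k}(f^{-1}(\{y\})\cap Z)\dH{k}(y)$. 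Letting $\varepsilon\to0$ then shows this integral is zero. Combining the three cases proves the formula for every $\mathcal{H}^n$-measurable $E$.
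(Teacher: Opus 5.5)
The paper does not prove this statement; it quotes it verbatim from Federer's Theorem 3.2.11. Your outline reproduces the standard proof of that result and is correct: the linear case by Fubini, Eilenberg's inequality with upper semicontinuity of $y\mapsto\mathcal{H}^{n-k}_\delta(f^{-1}(\{y\})\cap E)$ for compact $E$, the factorization $f=A_i\circ\psi_i$ on $\{J^kf>0\}$ built from $x\mapsto(f(x),P_{\ker A_i}x)$, and the perturbation $f(x)+\varepsilon z$ on $\{J^kf=0\}$.

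Three small points should be made explicit. In the degenerate step, first restrict to $Z$ of finite measure so that the bound of order $\varepsilon\,\lambda^n(Z)$ goes to zero. There you also need Eilenberg's inequality for $\mathcal{H}^n$ on subsets of $\mathbb{R}^{n+k}$; the same covering proof gives it. Finally, the $\lambda^n$-null set where $Df$ fails to exist is disposed of by that inequality.
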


The formul\ae in the previous two theorems can be naturally extended for nonnegative measurable functions.

\begin{definition}
        Let $k\in\mathbb{N}_0$ with $k\leq n$ and $S\subseteq\mathbb{R}^n$ be $\mathcal{H}^k$-measurable. 
        
        The set $S$ is said to be \textit{$k$-rectifiable} provided that there exists a bounded set $F\subseteq\mathbb{R}^k$ and a Lipschitz map $f\colon F\to\mathbb{R}^n$ such that $f(F)=S.$
        
        The set $S$ is said to be \textit{countably $\mathcal{H}^{k}$-rectifiable} if there exists a sequence $\{f_i\}_{i=0}^{\infty}$ of Lipschitz functions from $\mathbb{R}^k$ to $\mathbb{R}^n$ such that $$\mathcal{H}^k\left(S\setminus \bigcup_{i\in\mathbb{N}_0}f_i\left(\mathbb{R}^k\right)\right)=0.$$
 \end{definition}

The standard extension theorem for Lipschitz maps (see, for instance, \cite[Theorem~2.10.43]{federer}) implies that every $k$-rectifiable set is, in particular, countably $\mathcal{H}^k$-rectifiable. 

\begin{theorem}[\protect{\cite[Theorem 3.2.22]{federer}}]\label{rezy}
    If $W$ is a countably $\mathcal{H}^{m}$-rectifiable set and $Z$ is a countably $\mathcal{H}^{l}$-rectifiable set, where $m\geq l$, and $f\colon W\to Z$ is Lipschitzian, then for $\mathcal{H}^l$-a.a. $y\in Z$, the set $f^{-1}(\{y\})$ is countably $\mathcal{H}^{m-l}$-rectifiable.
\end{theorem}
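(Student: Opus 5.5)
The plan is to reduce to Euclidean domains through bi-Lipschitz parametrizations, prove the statement there by combining a $\mathcal{C}^1$ approximation with the implicit function theorem, and dispose of every exceptional set with the Eilenberg-type inequality
\[
\int^{*}_{\mathbb{R}^l}\mathcal{H}^{m-l}\big(N\cap f^{-1}(\{y\})\big)\dH{l}(y)\le c(m,l)\,(\mathrm{Lip} f)^l\,\mathcal{H}^m(N).
\]
This inequality gives the following fact, used repeatedly: if $\mathcal{H}^m(N)=0$, then for $\mathcal{H}^l$-a.a.\ $y$ the set $N\cap f^{-1}(\{y\})$ is $\mathcal{H}^{m-l}$-null. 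Null sets are trivially countably $\mathcal{H}^{m-l}$-rectifiable, so any $\mathcal{H}^m$-null part of $W$ can be ignored.

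First I decompose $W$ and $Z$. By the standard structure theorem, there are compact sets $K_i\subseteq\mathbb{R}^m$ and bi-Lipschitz maps $g_i\colon K_i\to W$ with $\mathcal{H}^m\big(W\setminus\bigcup_i g_i(K_i)\big)=0$. Similarly there are bi-Lipschitz maps $h_j\colon E_j\to Z$, with $E_j\subseteq\mathbb{R}^l$ compact, covering $Z$ up to an $\mathcal{H}^l$-null set $Z_0$. The fibres over $Z_0$ are irrelevant, since we only need the conclusion for a.a.\ $y$. The fibres meeting the null remainder of $W$ are handled by the fact above. A countable union of countably $\mathcal{H}^{m-l}$-rectifiable sets is again such a set, and countable unions of null sets of $y$'s stay null. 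It therefore suffices to fix $i,j$ and prove the claim for $A=K_i\cap (f\circ g_i)^{-1}(h_j(E_j))$ and $F=h_j^{-1}\circ f\circ g_i\colon A\to\mathbb{R}^l$. This $F$ is Lipschitz, and I extend it to a Lipschitz map $\mathbb{R}^m\to\mathbb{R}^l$ by \cite[Theorem~2.10.43]{federer}. Since $g_i$ is Lipschitz, it maps countably $\mathcal{H}^{m-l}$-rectifiable subsets of $A$ to countably rectifiable sets. Moreover $g_i(F^{-1}(\{z\})\cap A)=f^{-1}(\{h_j(z)\})\cap g_i(A)$, and $h_j^{-1}$ maps $\mathcal{H}^l$-null sets to null sets. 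Hence the Euclidean statement transfers back.

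In the Euclidean case, I use Whitney/Federer $\mathcal{C}^1$ approximation of Lipschitz maps. For every $\varepsilon>0$ there is a $\mathcal{C}^1$ map $G$ with $\lambda^m(\{F\neq G\})<\varepsilon$. Taking $\varepsilon=1/p$, we get closed sets $A_p$ on which $F=G_p$ and $DF=DG_p$ a.e. These sets cover $A$ up to a $\lambda^m$-null set, which is handled by the Eilenberg fact. On $A_p\cap\{\rank DG_p=l\}$, the implicit function theorem shows that each fibre $G_p^{-1}(\{z\})$ is locally an $(m-l)$-dimensional $\mathcal{C}^1$ submanifold. It is therefore countably $\mathcal{H}^{m-l}$-rectifiable, and so is its intersection with $A_p$. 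On $A_p\cap\{J^lF=0\}$, the coarea formula gives
\[
\int_{\mathbb{R}^l}\mathcal{H}^{m-l}\big(F^{-1}(\{z\})\cap A_p\cap\{J^lF=0\}\big)\dH{l}(z)=0,
\]
so for a.a.\ $z$ these fibre pieces are null. Taking the union over $p$ concludes the proof.

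I expect the main obstacle to be the bookkeeping of measurability and exceptional sets: making sure the Eilenberg inequality (an upper integral) really yields a.e.\ nullity, and that the a.e.\ agreement $DF=DG_p$ on $A_p$ is enough for the rank split. The first point I would handle by replacing $N$ with a Borel null superset. The second I would handle by intersecting with the set of density points of $A_p$ where $F$ is differentiable, which has full measure in $A_p$.
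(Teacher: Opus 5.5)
Your proposal is correct. The paper gives no proof of its own and only cites Federer's Theorem~3.2.22, whose argument has the same architecture as yours: bi-Lipschitz compact parametrizations of $W$ and $Z$ reduce the problem to a Lipschitz map $\mathbb{R}^m\to\mathbb{R}^l$, where the fibre pieces are rectifiable on the nondegenerate part, negligible on $\{J^lF=0\}$ by the coarea formula, and negligible on $\mathcal{H}^m$-null remainders by Eilenberg's inequality. One detail is worth stating explicitly: the paper's definition of countable $\mathcal{H}^{m-l}$-rectifiability presupposes $\mathcal{H}^{m-l}$-measurability, and your decomposition already gives this, since a.e.\ fibre is the union of countably many compact sets $g_i\big(F^{-1}(\{z\})\cap A\big)$ and an $\mathcal{H}^{m-l}$-null set.
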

We will need the following coarea formula for orthogonal projections.

\begin{rem}[\protect{\cite[Theorem~1.21 and Example~1.22]{zahle}}]
Let $L\subseteq\mathbb{R}^n$ be a linear subspace of dimension $m$ and let
$S\subseteq\mathbb{R}^n$ be a countably $\mathcal{H}^k$-rectifiable set with
$k+m\geq n$. Then
\[
\int\limits_{S} J^{n-m}\big(P_{L^{\perp}}|_{\textup{T}^k_w S}\big)\,\dH{k}(w)
=
\int\limits_{L^{\perp}}
\mathcal{H}^{k+m-n}\big(S\cap(z+L)\big)\,\dH{n-m}(z).
\] This formula can be naturally extended to nonnegative measurable functions.

Here, $$J^{n-m}\big(P_{L^{\perp}}|_{\textup{T}^k_w S}\big)=\sqrt{\det\big([P_{L^{\perp}}|_{\textup{T}^k_w S}][P_{L^{\perp}}|_{\textup{T}^k_w S}]^{\top}\big)}.$$
Moreover,
\[
J^{n-m}\big(P_{L^{\perp}}|_{\textup{T}^k_w S}\big)
=J^{n-k}\big(P_{(\textup{T}^{k}_w S)^{\perp}}|_L\big)\coloneq \sqrt{\det\big([P_{(\textup{T}^{k}_w S)^{\perp}}|_L][P_{(\textup{T}^{k}_w S)^{\perp}}|_L]^{\top}\big)}
.
\]

\end{rem}
 
 \begin{definition}
Let $S \subseteq \mathbb{R}^n$. We say that $S$ is a \emph{Lipschitz $k$-graph} (respectively, a $\mathcal{C}^1$ $k$-graph) if, up to a rotation of $\mathbb{R}^n$, the set $S$ can be written as the graph of a Lipschitz function from $\mathbb{R}^k$ to $\mathbb{R}^{n-k}$ (respectively, of a function of class $\mathcal{C}^1$).
\end{definition}

\begin{rem}[\protect{\cite[p. 80]{ambrosio}}]\label{remark}
    Given a countably $\mathcal{H}^{k}$-rectifiable set $S\subseteq\mathbb{R}^n$, there exists a countable family of pairwise disjoint compact subsets of $S$ that covers $S$ up to an $\mathcal{H}^{k}$-negligible set and each element of which is a subset of a $\mathcal{C}^1$ $k$-graph.
\end{rem}

We use such decompositions to define approximate tangent spaces for
countably $\mathcal{H}^k$-rectifiable sets of possibly infinite
$\mathcal{H}^k$-measure.

\begin{theorem}[\protect{\cite[Remark 2.80 and Theorem 2.83]{ambrosio}}]\label{klim}
        Let $S\subseteq\mathbb{R}^n$ be a countably $\mathcal{H}^k$-rectifiable set of finite $\mathcal{H}^k$-measure. Then for $\mathcal{H}^k$-a.a. $x\in S$, there exists a $k$-dimensional approximate tangent space at $x$ $\textup{T}^k_xS\in G_k(\mathbb{R}^n)$ uniquely determined by the following: \[
\lim_{r\to0_+}\frac{1}{r^k}\int\limits_S \phi\!\left(\frac{z-x}{r}\right)\,\mathrm{d}\mathcal{H}^k(z)
= \int\limits_{\textup{T}_x^kS} \phi(y)\,\mathrm{d}\mathcal{H}^k(y), \qquad \phi\in\mathcal{C}^{\infty}_c(\mathbb{R}^n).
\]
\end{theorem}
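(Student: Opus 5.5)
The plan is to reduce to the case of a compact subset of a $\mathcal{C}^1$ $k$-graph via Remark \ref{remark}, compute the blow-up there, and control the error coming from the rest of $S$ by density estimates. First, fix a decomposition $S=N\cup\bigcup_i K_i$ with $\mathcal{H}^k(N)=0$, the $K_i$ pairwise disjoint and compact, and $K_i\subseteq\Gamma_i$ for some $\mathcal{C}^1$ $k$-graph $\Gamma_i$. For $x\in K_i$, the natural candidate is $\textup{T}^k_xS\coloneq \textup{T}_x\Gamma_i$, the classical tangent plane of the graph. Uniqueness is immediate once existence is shown. If two planes $T,T'\in G_k(\mathbb{R}^n)$ both satisfy the limit identity, then $\int_T\phi\dH{k}=\int_{T'}\phi\dH{k}$ for all $\phi\in\mathcal{C}^\infty_c(\mathbb{R}^n)$. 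Hence $\mathcal{H}^k|_T=\mathcal{H}^k|_{T'}$ as Radon measures, and comparing supports gives $T=T'$. In particular, the resulting plane does not depend on the chosen decomposition.

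Second, I split the rescaled integral. Fix $\phi$ with $\operatorname{supp}\phi\subseteq B(0,R)$. I write
\[
\frac{1}{r^k}\int_S\phi\Big(\frac{z-x}{r}\Big)\dH{k}(z)=\frac{1}{r^k}\int_{\Gamma_i}\phi\Big(\frac{z-x}{r}\Big)\dH{k}(z)+E_1(r)-E_2(r),
\]
where $E_1$ is the integral over $S\setminus K_i$ and $E_2$ is the integral over $\Gamma_i\setminus K_i$. Both error terms are bounded by $\|\phi\|_\infty r^{-k}$ times the $\mathcal{H}^k$-measure of the corresponding set intersected with $B(x,Rr)$. The main term is handled by the area formula. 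Write $\Gamma_i$, after a rotation, as the graph of $u\in\mathcal{C}^1(\mathbb{R}^k;\mathbb{R}^{n-k})$ and let $x=(x',u(x'))$. Substituting $z=(x'+ry,u(x'+ry))$ turns the main term into
\[
\int_{\mathbb{R}^k}\phi\Big(y,\tfrac{u(x'+ry)-u(x')}{r}\Big)J_kF(x'+ry)\,\mathrm dy,\qquad F(t)=(t,u(t)).
\]
By the $\mathcal{C}^1$ property, the integrand converges locally uniformly to $\phi(y,Du(x')y)\,J_kF(x')$. Moreover, only $|y|\le R$ contributes. Hence the limit equals $\int_{\textup{T}_x\Gamma_i}\phi\dH{k}$, again by the area formula.

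Third, I show that $E_1(r),E_2(r)\to0$ for $\mathcal{H}^k$-a.e.\ $x\in K_i$. This is the step I expect to be the main obstacle. For $E_2$, I pull back via $F$ to $\mathbb{R}^k$, where $\mathcal{H}^k|_{\Gamma_i}$ corresponds to the measure $J_kF\,\lambda^k$. Since $F$ is bi-Lipschitz, the balls $B(x,Rr)\cap\Gamma_i$ correspond to sets comparable to Euclidean balls. The Lebesgue density theorem for $F^{-1}(K_i)$ then gives $\mathcal{H}^k\big((\Gamma_i\setminus K_i)\cap B(x,Rr)\big)=o(r^k)$ at a.e.\ point. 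For $E_1$, I use the finiteness of $\mathcal{H}^k(S)$. Consider the finite Radon measure $\mu=\mathcal{H}^k|_{S\setminus K_i}$; I aim to prove $\Theta^{*k}(\mu,x)=0$ for $\mathcal{H}^k$-a.e.\ $x\in K_i$. The standard density comparison is the route: for $t>0$ and every set $A$ on which $\Theta^{*k}(\mu,\cdot)>t$, one has $t\,\mathcal{H}^k(A)\le C\mu(U)$ for any open $U\supseteq A$. Applying this to $A=\{x\in K_i:\Theta^{*k}(\mu,x)>t\}$ and using outer regularity of $\mu$ (note $\mu(K_i)=0$) yields $\mathcal{H}^k(A)=0$. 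Letting $t\downarrow0$ along a sequence finishes this step. Combining the three steps, the identity holds at $\mathcal{H}^k$-a.e.\ point of each $K_i$, hence $\mathcal{H}^k$-a.e.\ on $S$. A countable dense family of test functions $\phi$ suffices, by a uniform approximation of a general $\phi$ combined with the uniform bound $r^{-k}\mathcal{H}^k(S\cap B(x,Rr))\le C$, which also follows from the above estimates.
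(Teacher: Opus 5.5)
Your proposal is correct and is essentially the standard argument behind the cited result (Ambrosio--Fusco--Pallara, Theorem 2.83(i)). You decompose $S$ into compact pieces $K_i$ of $\mathcal{C}^1$ $k$-graphs, compute the blow-up on the graph via the area formula, and discard $\Gamma_i\setminus K_i$ and $S\setminus K_i$. For the first you use Lebesgue density in the parameter domain, and for the second the upper-density estimate $\Theta^{*k}(\mathcal{H}^k|_{S\setminus K_i},x)=0$ for $\mathcal{H}^k$-a.e.\ $x\in K_i$.

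Two harmless remarks. First, $F$ is only locally bi-Lipschitz; globally you have only $|F(s)-F(t)|\geq|s-t|$, and that, together with local boundedness of $J_kF$, is all you actually use. Second, your final density-in-$\phi$ step is unnecessary, since none of your exceptional sets depends on $\phi$.
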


\begin{definition}[\protect{\cite[Definition 2.86]{ambrosio}}]
     Let $S\subseteq\mathbb{R}^n$ be a countably $\mathcal{H}^k$-rectifiable set, and let $\{S_i\}_{i=1}^{\infty}$ be a partition of $\mathcal{H}^k$-a.a. of $S$ into countably $\mathcal{H}^k$-rectifiable sets of finite $\mathcal{H}^k$-measure. For $\mathcal{H}^k$-a.e. $x\in S_i$, we define the \emph{approximate tangent space of $S$ at $x$}, written $\textup{T}_x^kS$, as $$\textup{T}_x^kS\coloneqq \textup{T}_x^kS_i,$$ where the latter is defined in Theorem \ref{klim}.
     
     In particular, if $k=n-1$, for $\mathcal{H}^{n-1}$-a.a. $x\in S$, there exists a unit vector $\nu_S(x)$, unique up to sign, that generates the linear subspace $(\textup{T}^{n-1}_xS)^{\perp}$.
\end{definition}
The following compatibility property shows that this definition is
independent, up to an $\mathcal{H}^k$-negligible set, of the chosen
decomposition.
\begin{rem}[\protect{\cite[Remark 2.87]{ambrosio}}]\label{consistency}
    If $S^{\prime}, S\subseteq\mathbb{R}^n$ are countably $\mathcal{H}^k$-rectifiable sets satisfying $S^{\prime}\subseteq S$, then $$\textup{T}^k_xS=\textup{T}_x^kS^{\prime}\qquad\text{for $\mathcal{H}^k$-a.a. $x\in S^{\prime}.$}$$ Furthermore, the approximate tangent-space map
\[
x\mapsto \textup{T}_x^kS, \qquad x\in S,
\]
is \(\mathcal H^k\)-measurable and is defined uniquely up to changes on an \(\mathcal H^k\)-negligible subset of \(S\).
\end{rem}

We now prove the following covering lemma suggested together with proof by the reviewer, which is a weaker version of the
Besicovitch covering theorem but is sufficient for our purposes.
\begin{lemma}\label{bes}
       Let $A\subseteq\mathbb{R}^n$, and let $\rho>0$. Then there exists an at most countable set $S\subseteq A$ such that \begin{align*}
          A\subseteq\bigcup_{x\in S}B(x, \rho) &&\text{and}&& \sum_{x\in S}\chi_{B(x, \rho)}\leq 3^n.
       \end{align*}
\end{lemma}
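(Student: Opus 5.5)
The plan is to take a maximal $\rho$-separated subset of $A$ and bound the overlap by a volume-packing argument. If $A=\emptyset$, take $S=\emptyset$. Otherwise, by Zorn's lemma (or a direct greedy construction on a countable dense subset, to avoid choice-theoretic subtleties), choose $S\subseteq A$ maximal with respect to inclusion among subsets of $A$ satisfying $|x-y|>\rho$ for all distinct $x,y\in S$.

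First, $S$ is at most countable. The open balls $U(x,\rho/2)$, $x\in S$, are pairwise disjoint, and each contains a point with rational coordinates. Second, $A\subseteq\bigcup_{x\in S}B(x,\rho)$: if some $a\in A$ had $|a-x|>\rho$ for all $x\in S$, then $S\cup\{a\}$ would still be $\rho$-separated, contradicting maximality.

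The main step is the overlap bound. Fix $z\in\mathbb{R}^n$ and let $S_z\coloneq\{x\in S: |x-z|\le\rho\}$, so that
\[
\sum_{x\in S}\chi_{B(x,\rho)}(z)=\#S_z.
\]
For $x\in S_z$, the open balls $U(x,\rho/2)$ are pairwise disjoint by separation. Each is contained in $B(z,3\rho/2)$, since $|y-z|\le|y-x|+|x-z|<\rho/2+\rho$. Comparing Lebesgue measures gives
\[
\#S_z\,\omega_n(\rho/2)^n\le\omega_n(3\rho/2)^n,
\]
hence $\#S_z\le 3^n$. In particular, $S_z$ is finite, so applying the estimate to any finite subfamily suffices.

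The only delicate point is the existence of a maximal separated set, and this is standard. To avoid Zorn's lemma, one can instead pick a countable dense subset $\{a_j\}$ of $A$ and build $S$ greedily: add $a_j$ if it is at distance $>\rho$ from all previously chosen points. Then every $a_j$ lies within distance $\rho$ of $S$. This covers $A$ only by slightly larger balls, since the $a_j$ are only dense in $A$. That loss is avoided by running the greedy selection with threshold $\rho/2$ on the dense set, which covers $A$ by balls of radius $\rho$. The packing count then changes to $\#S_z\le 5^n$, which is worse than $3^n$.

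Hence the Zorn argument on $A$ itself, which yields the constant $3^n$ claimed in the statement, is the route to take.
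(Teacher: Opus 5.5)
Your proof is correct and is essentially the paper's argument: take a maximal strictly $\rho$-separated subset of $A$, get countability from separability and the covering from maximality, and bound the overlap by packing disjoint balls of radius $\rho/2$ into $B(z,3\rho/2)$. One side remark: your concern about the greedy construction is unfounded, because with threshold $\rho$ on a countable dense subset of $A$ it already covers all of $A$ by the closed balls $B(x,\rho)$ (a $\rho$-separated set is locally finite and these balls are closed), so it also yields the constant $3^n$ without Zorn's lemma.
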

\begin{proof}
    Let $S\subseteq A$ be a strictly $\rho$-separated set (that is, whenever $x, y\in S$ with $x\neq y$, then $|x-y|>\rho$), maximal with respect to inclusion. Then $S$ is at most countable since $\mathbb{R}^n$ is separable. Maximality ensures that for every $y\in A$ there exists some $x\in S$ such that $|x-y|\leq \rho.$ Consequently, $$A\subseteq\bigcup_{x\in S}B(x, \rho).$$ Further, let $$S_{y}\coloneqq \{x\in S : y\in B(x, \rho)\}, \qquad y\in\mathbb{R}^n.$$ Let $y\in \mathbb{R}^n$. The balls $\{B(x, \rho/2)\}_{x\in S_{y}}$ are pairwise disjoint and contained in $B(y, 3\rho/2)$, which in turn implies $$\mathcal{H}^0(S_y)\omega_n(\rho/2)^n\leq \lambda^n\Big(\bigcup_{x\in S_y} B(x, \rho/2)\Big)\leq \omega_n(3\rho/2)^n.$$ Thus $\mathcal{H}^0(S_y)\leq 3^n.$ Finally, for any $y\in\mathbb{R}^n$ we have $$\sum_{x\in S}\chi_{B(x,\rho)}(y)=\mathcal{H}^0(S_y)\leq 3^n.$$
\end{proof}
\subsection{Convex Bodies}
In this subsection, we summarize the basic facts about convex bodies and their properties. For a more detailed exposition, we refer the reader to~\cite{rolf}.
\begin{definition}\label{convexity}
    A \emph{convex body} is a nonempty compact convex
subset of $\mathbb R^n$. We denote by $\mathcal C^n$ the family of all convex
bodies $C$ satisfying $0\in\relint(C)$. For a convex body $C$, the \emph{dimension}
$\dim(C)$ is the dimension of $\aff(C)$. 
    
    Further, we set \begin{align*}
        \mathcal{C}^{n, k}\coloneq\{C\in\mathcal{C}^n : \dim(C)=k\}, \qquad k\in\{0, \dots, n\}.
    \end{align*}
    \end{definition}

    In this paper, we work mainly with convex bodies $C$ with $0\in\relint(C)$. This nonstandard assumption is not restrictive when studying the volume $\lambda^n(S\oplus rC)$, since the Lebesgue measure is translation invariant.

    \begin{definition}
    Let $C\in\mathcal{C}^n$. The \textit{support function of $C$} is given by the formula $$h_C(y)\coloneq \sup_{x\in C}x\cdot y,\qquad y\in\mathbb{R}^n$$ 

     The \emph{radial function of $C$} is defined as $$\rho_{C}(x)\coloneq\max\{t\geq0 : tx\in C\}, \qquad x\in\mathbb{S}^{n-1}.$$
\end{definition}

\begin{rem} Let $C\in\mathcal{C}^n$.
    \begin{itemize}
        \item The function $x\mapsto h_C(x)$ is sublinear, hence convex, and Lipschitz with the property $h_{-C}(x)=h_{C}(-x)$ for all $x\in\mathbb{R}^n$. It is also true that $h_{aC}=ah_{C}$ for any $a>0$.
        \item We have $\rho_C(x)x\in C$ for any $x\in\mathbb{S}^{n-1}$, which in turn implies that $\rho_C(x)\leq\diam(C)$ for any $x\in\mathbb{S}^{n-1}.$
 \end{itemize}\end{rem}
\begin{definition}
    The mapping $d_H\colon \mathcal{C}^n\times\mathcal{C}^n\to[0, \infty)$ defined as $$d_H(C, K)\coloneq\max\big\{\sup_{x\in C}\dist(x, K), \sup_{y\in K}\dist(y, C)\big\},\qquad (C, K)\in\mathcal{C}^n\times\mathcal{C}^n,$$ is called the Hausdorff metric. The set $\mathcal{C}^n$ endowed with the mapping $d_H$ is a metric space.
\end{definition}
\begin{rem}
    For any pair $(C, K)\in\mathcal{C}^n\times\mathcal{C}^n$, it holds that $$d_H(C, K)=\Vert h_C-h_K\Vert_{\mathcal{C}(\mathbb{S}^{n-1})}=\inf\{t\geq 0 : C\subseteq K\oplus B(0, t),\, K\subseteq C\oplus B(0, t)\}.$$
\end{rem}

\medskip

Let us define the so-called mixed volumes (see \cite[Ch. 5]{rolf}).
\begin{definition}
    Let $C, K\subseteq\mathbb{R}^n$ be two compact convex sets. Then the measure $\lambda^n(K\oplus rC)$ can be expressed as follows:
\[
\lambda^n(K\oplus rC)
=\sum_{i=0}^n \binom{n}{i} V_n\bigl(K[i],\,C[n-i]\bigr)\,r^{\,n-i},\qquad r>0,
\]
where the coefficient $V_n\bigl(K[i],C[n-i]\bigr)$ is called the \emph{mixed volume of $i$ copies of $K$ and $n-i$ copies of $C$.}
\end{definition}

\begin{rem}
    If $C, K$ and $K^{\prime}$ are compact convex sets with $K\subseteq K^{\prime}$, then for any index $i\in\{0, \dots, n\}$, $x, y\in\mathbb{R}^n$, and for any $\alpha, \beta>0$, we have \begin{equation}\label{pp5}
    \begin{split}
        V_n\bigl(K[i],C[n-i]\bigr)&\leq V_n\bigl(K^{\prime}[i], C[n-i]\bigr)\\V_n\bigl(K[i],C[n-i]\bigr)&= V_n\bigl((x+K)[i], (y+C)[n-i]\bigr)\\ V_n\bigl((\alpha K)[i], (\beta C)[n-i]\bigr)&=\alpha^i \beta^{n-i}V_n\bigl(K[i], C[n-i]\bigr) \\ V_n\bigl(K[i], K[n-i]\bigr)&=\lambda^n(K).\end{split}\end{equation}
        
        Furthermore, the mapping $$(C, K)\mapsto V_n\bigl(K[i],C[n-i]\bigr)$$ is continuous from $\mathcal{C}^n\times\mathcal{C}^n$ to the reals.
\end{rem}

\section{Anisotropic $k$-dimensional Minkowski Content for Convex Bodies and Compact $\mathcal{C}^1$ $k$-graphs}\label{secDef}

Let us introduce \textit{$s$-dimensional anisotropic Minkowski content}.

\begin{definition}\label{def1} Let $C\in\mathcal{C}^n$, let  $E\subseteq\mathbb{R}^n$, and let $s\in [0, n]$. Putting $$\mathcal{M}^{s}_{r, C}(E)\coloneq \frac{\lambda^{n}(E\oplus rC)}{\omega_{n-s}r^{n-s}},\qquad r>0,$$ we define the \textit{$s$-dimensional lower} and \textit{upper $C$-anisotropic Minkowski content of $E$}, possibly taking the value $+\infty$, by \begin{align*}
    \mathcal{M}^s_{C}(E)_*\coloneq \liminf_{r\to0_+}\mathcal{M}^{s}_{r, C}(E)&& \text{and}&&\mathcal{M}^s_{C}(E)^*\coloneq \limsup_{r\to0_+}\mathcal{M}^{s}_{r, C}(E).
\end{align*} Whenever the two quantities are equal, that is, $\mathcal{M}^s_{C}(E)_*=\mathcal{M}^s_{C}(E)^*$, we define the \textit{$s$-dimensional $C$-anisotropic Minkowski content of $E$} by $$\mathcal{M}^s_{C}(E)\coloneq \mathcal{M}^s_{C}(E)^*.$$
\end{definition}

\begin{rem}\label{measurability}
As pointed out by the reviewer, if $C\in\mathcal{C}^n$ with $\dim(C)<n$ and $E\subseteq\mathbb{R}^n$ is measurable, then the Minkowski sum $E\oplus C$ need not be measurable. A simple example is the following: Let $F\subseteq\mathbb{R}$ be not $\lambda^1$-measurable, and let $$C\coloneqq \{0\}\times [-1, 1]\times [-1, 1], \qquad E\coloneqq F\times\{0\}\times\{0\}.$$ Then $E$ is $\lambda^3$-measurable, but $E\oplus C=F\times[-1, 1]\times [-1, 1]$ is not $\lambda^3$-measurable. Observe that in this case, the set $E$ is countably $\mathcal{H}^2$-rectifiable. Indeed, it is contained in a line, which implies that $E$ is $\mathcal{H}^2$-negligible. In Definition \ref{def1} we therefore work with the Lebesgue outer measure.

However, if $E$ is Borel, or more generally analytic, then $E\oplus C$ is measurable for every $C\in\mathcal{C}^n$ since the set $E\oplus C$ is analytic as the continuous image of the Borel (respectively analytic) set $E\times C$ under the continuous map $f\colon \mathbb{R}^n\times \mathbb{R}^n\to\mathbb{R}^n$ given by $f(x, y)\coloneqq x+y.$ 

On the other hand, if $\dim(C)=n$, then the Minkowski sum $E\oplus C$ is measurable even if $E$ itself is not measurable. Indeed, $$E\oplus \inter(C)=\overline{E}\oplus\inter(C).$$ The inclusion $\subseteq$ is trivial. Conversely, if $x=e+c\in \overline{E}\oplus\inter(C)$ for some $e\in\overline{E}$ and $c\in\inter(C)$, then there exists some $\varepsilon>0$ such that $B(c, \varepsilon)\subseteq \inter(C)$. Since $e\in\overline{E}$, there exists some $e^{\prime}\in E$ such that $|e-e^{\prime}|<\varepsilon$. But then $$x=e+c=e^{\prime}+(e-e^{\prime})+c\in E\oplus B(c, \varepsilon)\subseteq E\oplus \inter(C),$$ which completes the proof of the inclusion $\supseteq$. Furthermore, \begin{align}\label{pp20}
    \lambda^n(\overline{E}\oplus C)=\lambda^n\big(\overline{E}\oplus\inter(C)\big).
\end{align} (see \cite[p. 6, Eq. (10)]{Rataj5}). Next \begin{align*}
   \overline{E}\oplus\inter(C)=E\oplus \inter(C)\subseteq  E\oplus C\subseteq \overline{E}\oplus C.
\end{align*}
If $E$ is bounded, this together with \eqref{pp20} implies the measurability of $E\oplus C$. If $E$ is not bounded, we use that $$E\oplus C=\bigcup_{j\in\mathbb{N}}\Big(\big(E\cap B(0, j)\big)\oplus C\Big),$$ from which we conclude that the set $E\oplus C$ is again measurable.
\end{rem}

\begin{rem}\label{0inf}
    Let $C, C^{\prime}\in\mathcal{C}^n$ such that $\LO(C)=\LO(C^{\prime})$. Then $aC\subseteq C^{\prime}\subseteq bC$ for some $a, b>0$. Therefore, for a measurable set $E\subseteq\mathbb{R}^n$ and $s\in [0, n]$, we have $$a^{n-s}\mathcal{M}^s_{ar, C}(E)=\mathcal{M}^s_{r, aC}(E)\leq\mathcal{M}^s_{r, C^{\prime}}(E)\leq\mathcal{M}^s_{r, bC}(E)=b^{n-s}\mathcal{M}^s_{br, C}(E),\qquad r>0.$$ 

    Consequently, $$\mathcal{M}^s_{C}(E)_*=\infty \iff \mathcal{M}^s_{C^{\prime}}(E)_*=\infty$$ and $$\mathcal{M}^s_{C}(E)^*=0 \iff \mathcal{M}^s_{C^{\prime}}(E)^*=0.$$
\end{rem}

    The existence of the limit $\mathcal{M}^s_{C}(E)$ need not hold in general. For example, if $E$ is the \emph{Sierpi\'nski gasket} and $s=\log_2 3$, the limit fails to exist (see \cite[Example 3.3]{Rataj} for the isotropic case and \cite[Example 3.7]{frys2} for the anisotropic case).

    Even if $s\in [0, n]\cap \mathbb{N}$, the existence of the limit $\mathcal{M}^s_{C}(E)$ is not guaranteed. In \cite[3.2.40]{federer}, the author gives an example of a compact set $E\subseteq\mathbb{R}^2$ for which $$0=\mathcal{H}^1(E)=\mathcal{M}^1_{B(0,1)}(E)_*<\mathcal{M}^1_{B(0,1)}(E)^*=\infty.$$

   In our setting, we investigate the $k$-dimensional $C$-anisotropic content of a 
$k$-rectifiable, respectively countably $\mathcal{H}^k$-rectifiable, closed set 
for $k\in\{1, \dots, n-1\}$. 

The case $s=0$ and $n=\dim(C)$ is trivial, since for any measurable
set $E\subseteq\mathbb{R}^n$ and any $C\in\mathcal{C}^{n, n}$ (see \cite[Lemma~2.6]{frys2} for the case when $E$ is compact; the proof does not require compactness) we have
\begin{align}\label{0n}
    \mathcal{M}^0_{C}(E) = \frac{\mathcal{H}^0(E)\lambda^n(C)}{\omega_n}.
\end{align}

If $s=n$ and $E$ is compact, then for any $C\in\mathcal{C}^{n}$ it holds that \cite[Lemma~2.6]{frys2} $$\mathcal{M}^n_{C}(E)=\lim_{r\to0_+}\lambda^n(E\oplus rC)=\lambda^n(E).$$ More generally, if $C\in\mathcal{C}^{n}$ and $E\subseteq\mathbb{R}^n$, the mapping $r\mapsto \lambda^{n}(E\oplus rC)$ is nondecreasing. Hence, the limit $\mathcal{M}^n_C(E)$ always exists and coincides with $$\inf_{r>0}\lambda^{n}(E\oplus rC).$$

\begin{rem}\label{pp11}
    Let $C\in\mathcal{C}^n$, and let $E\subseteq\mathbb{R}^n$. Let $L\coloneqq\LO(C)$ and define $$\overline{E}^L\coloneqq \bigcap_{\sigma>0}(E\oplus \sigma C).$$ Note that the definition of $\overline{E}^L$ depends only on the linear span of $C$ and that if $\dim(C)=n,$ then $\overline{E}^{L}=\overline{E}$.
    
    Let $r>0$ and $s\in[0, n]$. Then $$E\oplus rC \subseteq \overline{E}^L\oplus rC\subseteq E\oplus r\sigma C\oplus rC, \qquad \sigma>0.$$ Consequently, $$\mathcal{M}^s_{r,C}(E)\leq \mathcal{M}^s_{r,C}(\overline{E}^L)\leq (1+\sigma)^{n-s}\mathcal{M}^s_{r(1+\sigma), C}(E),\qquad \sigma>0.$$

    Hence $$\mathcal{M}^s_{C}(E)_*\leq \mathcal{M}^s_{C}(\overline{E}^L)_*\leq (1+\sigma)^{n-s}\mathcal{M}^s_{C}(E)_*,\qquad \sigma>0$$ and $$\mathcal{M}^s_{C}(E)^*\leq \mathcal{M}^s_{C}(\overline{E}^L)^*\leq (1+\sigma)^{n-s}\mathcal{M}^s_{C}(E)^*,\qquad \sigma>0.$$
    Since $\sigma>0$ was arbitrary, we get \begin{align*}
        \mathcal{M}^s_{C}(E)_*=\mathcal{M}^s_{C}(\overline{E}^L)_*&&\text{and}&&\mathcal{M}^s_{C}(E)^*=\mathcal{M}^s_{C}(\overline{E}^L)^*.
    \end{align*} 
\end{rem}

\begin{definition}
    Let $C\in\mathcal{C}^n$, and let $S\subseteq\mathbb{R}^n$ be a countably $\mathcal{H}^k$-rectifiable set, where $k\in\{1, \dots, n-1\}.$ For $x\in S$ such that $\textup{T}^k_xS$ exists, put \begin{align*}
        \textup{N}^{n-k}_xS\coloneq(\textup{T}^k_xS)^{\perp},&&\textup{n}^{n-k}_{x}S\coloneq \mathbb{S}^{n-1}\cap\textup{N}^{n-k}_xS&&\text{and}&&C_x\coloneq P_{(\textup{T}^k_xS)^{\perp}}(C).
    \end{align*}
\end{definition}
 Clearly, $C_x$ is a convex body of dimension at most $n-k$ and by the area formula and Fubini's theorem \begin{equation}\label{Eq0}\begin{split}
     \mathcal{H}^{n-k}(C_x)&=\int\limits_{0}^{\infty}\int\limits_{\textup{n}^{n-k}_{x}S}\chi_{C_x}(s\nu)s^{n-k-1}\dH{n-k-1}(\nu)\dlambda{1}(s)\\&=\int\limits_{\textup{n}^{n-k}_{x}S}\int\limits_{0}^{\rho_{C_x}(\nu)}s^{n-k-1}\dlambda{1}(s)\dH{n-k-1}(\nu)\\&=\frac{1}{n-k}\int\limits_{\textup{n}^{n-k}_{x}S}\rho_{C_x}(\nu)^{n-k}\dH{n-k-1}(\nu).
 \end{split}
\end{equation}Furthermore, \begin{equation}\label{Eq17}
     \mathcal{H}^{n-k-1}(\textup{n}^{n-k}_{x}S)=(n-k)\omega_{n-k}.
 \end{equation}
 
\medskip 

In what follows we show that the limit $\mathcal{M}_C^k(S)$ always exists for sets $S$ which are compact subsets of $\mathcal{C}^1$ $k$-graphs and for convex bodies $C$ with $0\in\inter(C)$. In this case, the limit coincides with the following functional which depends in a natural way on $C$: \begin{equation}\label{limit}
\frac{1}{\omega_{n-k}}\int\limits_{S}\mathcal{H}^{n-k}(C_x)\dH{k}(x).
\end{equation} 
In contrast to Remark \ref{pp11}, the value of \eqref{limit} might change drastically when passing from $S$ to $\overline{S}^L$.

\medskip

The class of all sets which are compact subsets of $\mathcal{C}^1$ $k$-graphs contains, in particular, all convex bodies in $\mathbb{R}^n$ of dimension at most $k$.

For such convex bodies the situation is particularly simple.
If $K$ is a convex body of dimension at most $k$, then by \cite[Ch. 5]{rolf}
\begin{equation}\label{pp1}
    \lambda^n(K\oplus rC)
=\sum_{i=0}^k \binom{n}{i} V_n\bigl(K[i],\,C[n-i]\bigr)\,r^{\,n-i},\qquad r>0.
\end{equation}

Consequently,
\begin{equation}\label{pp3}
    \mathcal{M}^k_{C}(K)
=\binom{n}{k}\frac{V_n\bigl(K[k], C[n-k]\bigr)}{\omega_{n-k}}.
\end{equation}

Before proving that the $k$-dimensional $C$-anisotropic Minkowski content of a convex body of dimension at most $k$ coincides with the value of \eqref{limit}, we establish an equivalent form of \eqref{limit}.
\begin{lemma} Let $S$ be a countably $\mathcal{H}^k$-rectifiable set, where $k\in\{1, \dots, n-1\},$ and let $C\in\mathcal{C}^n$. Then \begin{equation*}\begin{split}\frac{1}{\omega_{n-k}}\int\limits_{S}\mathcal{H}^{n-k}(C_x)\dH{k}(x)=\int\limits_{S}\fint\limits_{\textup{n}^{n-k}_{x}S}\rho_{C_x}(\nu)^{n-k}\dH{n-k-1}(\nu)\dH{k}(x).
      \end{split}\end{equation*}
    \begin{proof}\newcommand\myeqf{\mathrel{\stackrel{\makebox[0pt]{\mbox{\normalfont\tiny (\ref{Eq17})}}}{=}}}\newcommand\myeqg{\mathrel{\stackrel{\makebox[0pt]{\mbox{\normalfont\tiny (\ref{Eq0})}}}{=}}} We have
    \begin{equation*}
    \begin{split}
        \int\limits_{S}\mathcal{H}^{n-k}&(C_x)\dH{k}(x)\\&\myeqg\frac{1}{n-k}\int\limits_{S}\int\limits_{\textup{n}^{n-k}_{x}S}\rho_{C_x}(\nu)^{n-k}\dH{n-k-1}(\nu)\dH{k}(x)\\&\myeqf\omega_{n-k}\int\limits_{S}\frac{1}{\mathcal{H}^{n-k-1}(\textup{n}^{n-k}_{x}S)}\int\limits_{\textup{n}^{n-k}_{x}S}\rho_{C_x}(\nu)^{n-k}\dH{n-k-1}(\nu)\dH{k}(x)\\&=\omega_{n-k}\int\limits_{S}\fint\limits_{\textup{n}^{n-k}_{x}S}\rho_{C_x}(\nu)^{n-k}\dH{n-k-1}(\nu)\dH{k}(x),
    \end{split}
\end{equation*}
       which completes the proof.
    \end{proof}
\end{lemma} 

We now prove that $\mathcal{M}^k_{C}(K)$ coincides with \eqref{limit} for all convex bodies $K$ with $\dim (K) \le k$.
It suffices to establish the statement for $K \in \mathcal{C}^{n,k}$, since otherwise both $\mathcal{M}^k_C(K)$ and the expression in \eqref{limit} vanish.

\begin{lemma}\label{inter}
    Let $K\subseteq\mathbb{R}^n$ be a convex body of dimension $k\in\{1, \dots, n-1\}$, and $C\in\mathcal{C}^n$. Then
        \begin{align}\label{pp2}
            \mathcal{M}^k_{C}(K)=\frac{1}{\omega_{n-k}}\int\limits_{K}\mathcal{H}^{n-k}(C_x)\dH{k}(x).
        \end{align}
        \begin{proof}
            By translation invariance of the Lebesgue measure, we may assume that $0\in\relint(K)$. Let $\varepsilon\in (0, 1)$, denote $L\coloneq\LO(K)$ and define \begin{align*}
                K_{\varepsilon}\coloneq\{x\in K : \dist(x, \partial_L K)>\varepsilon\}&&\text{and}&& K^{\prime}_{\varepsilon}\coloneq K\oplus B_{L}(0, \varepsilon).
            \end{align*}
            First, notice that by the Fubini theorem, we have
            $$\lambda^n(K\oplus rC)=\int\limits_{L}\mathcal{H}^{n-k}\big((K\oplus rC)\cap (x+L^{\perp})\big)\dH{k}(x), \qquad r>0.$$ 

            For $x\in L$, we have $$(K\oplus rC)\cap (x+L^{\perp})=\big\{x+rP_{L^{\perp}}(c) : c\in C,\, x-rP_L(c)\in K\big\}.$$ Moreover, if $x\in K_{\varepsilon}$ and $r>0$ is small enough (depending only on $\varepsilon$), then $x-rP_L(c)\in K,$ and hence $$(K\oplus rC)\cap (x+L^{\perp})=x+rP_{L^{\perp}}(C),$$ from which we conclude that for all sufficiently small $r>0$\begin{equation*}
                \begin{split}
                    \lambda^n(K\oplus rC)&\geq\int\limits_{K_{\varepsilon}}\mathcal{H}^{n-k}\big((K\oplus rC)\cap (x+L^{\perp})\big)\dH{k}(x)\\&=\int\limits_{K_{\varepsilon}}\mathcal{H}^{n-k}\big(P_{L^{\perp}}(rC)\big)\dH{k}\\&=r^{n-k}\int\limits_{K_{\varepsilon}}\mathcal{H}^{n-k}\big(P_{L^{\perp}}(C)\big)\dH{k},
                \end{split}
            \end{equation*}whence \begin{equation*}
                \begin{split}
                    \mathcal{M}^k_C(K)_*\geq\frac{1}{\omega_{n-k}}\mathcal{H}^k(K_\varepsilon)\mathcal{H}^{n-k}\big(P_{L^{\perp}}(C)\big), \qquad\varepsilon>0,
                \end{split}
            \end{equation*}from which it follows $$\mathcal{M}^k_C(K)_*\geq\frac{1}{\omega_{n-k}}\mathcal{H}^k(K)\mathcal{H}^{n-k}\big(P_{L^{\perp}}(C)\big)=\frac{1}{\omega_{n-k}}\int\limits_{K}\mathcal{H}^{n-k}(C_x)\dH{k}(x),$$ as $L=\textup{T}^k_xK$ for $\mathcal{H}^k$-a.a. $x\in K.$

            If $x\notin K_{\varepsilon}^{\prime}$ and $r>0$ is small enough, then $\mathcal{H}^{n-k}\big((K\oplus rC)\cap (x+L^{\perp})\big)=0$. So, for small enough $r>0$, it holds that \begin{equation*}
                \begin{split}
                    \lambda^n(K\oplus rC)&\leq\int\limits_{K_{\varepsilon}^{\prime}}\mathcal{H}^{n-k}\big((K\oplus rC)\cap (x+L^{\perp})\big)\dH{k}(x)\\&\leq\int\limits_{K_{\varepsilon}^{\prime}}\mathcal{H}^{n-k}\big(P_{L^{\perp}}(rC)\big)\dH{k}\\&=r^{n-k}\int\limits_{K^{\prime}_{\varepsilon}}\mathcal{H}^{n-k}\big(P_{L^{\perp}}(C)\big)\dH{k},
                \end{split}
            \end{equation*} where we used that $$(K\oplus rC)\cap (x+L^{\perp})\subseteq x+rP_{L^{\perp}}(C).$$ Hence \begin{equation*}
                \begin{split}
                    \mathcal{M}^k_C(K)^*\leq\frac{1}{\omega_{n-k}}\mathcal{H}^k(K^{\prime}_\varepsilon)\mathcal{H}^{n-k}\big(P_{L^{\perp}}(C)\big), \qquad\varepsilon>0,
                \end{split}
            \end{equation*}from which it follows $$\mathcal{M}^k_C(K)^*\leq\frac{1}{\omega_{n-k}}\mathcal{H}^k(K)\mathcal{H}^{n-k}\big(P_{L^{\perp}}(C)\big)=\frac{1}{\omega_{n-k}}\int\limits_{K}\mathcal{H}^{n-k}(C_x)\dH{k}(x),$$ which completes the proof.
        \end{proof}
\end{lemma}

Before proving an analogous result for $k$-rectifiable compact sets, we first examine the properties of the functional \eqref{limit}.

\begin{lemma}\label{continuity}
    The mapping $\Psi_{L}\colon \mathcal{C}^n\to\mathbb{R}$ given by $$\Psi_{L}\colon C\mapsto \mathcal{H}^{n-k}\big(P_{L^{\perp}}(C)\big)$$ is continuous uniformly with respect to $L\in G_k(\mathbb{R}^n)$. 
    
    In particular, the mapping $\Phi_S\colon \mathcal{C}^n\to\mathbb{R}$ given by $$\Phi_S\colon C\mapsto \frac{1}{\omega_{n-k}}\int\limits_{S}\mathcal{H}^{n-k}(C_x)\dH{k}(x)$$ is continuous for every countably $\mathcal{H}^k$-rectifiable set $S$ of finite $\mathcal{H}^k$-measure.
    \begin{proof}
        Let $\varepsilon\in(0, 1)$. If $K, C\in\mathcal{C}^n$ with $d_H(C, K)<\varepsilon,$ it holds that $C\subseteq K\oplus B(0, \varepsilon)$ and $K\subseteq C\oplus B(0, \varepsilon), $ from which we have \begin{equation*}
            \begin{split}
                \Psi_{L}(K)-\Psi_{L}(C)&\leq\Psi_{L}\big(C\oplus B(0, \varepsilon)\big)-\Psi_{L}(C)\\&=\mathcal{H}^{n-k}\Big(P_{L^{\perp}}(C)\oplus \varepsilon P_{L^{\perp}}\big(B(0, 1)\big)\Big)-\mathcal{H}^{n-k}\big(P_{L^{\perp}}(C)\big)\\&=\sum_{j=0}^{n-k-1}{n-k\choose j}\varepsilon^{n-k-j} V_{n-k}\big(P_{L^{\perp}}(C)[j], B_{L^{\perp}}(0, 1)[n-k-j]\big)\\&\leq\varepsilon\sum_{j=0}^{n-k-1}{n-k\choose j}\diam(C)^j V_{n-k}\big(B_{L^{\perp}}(0, 1)[j], B_{L^{\perp}}(0, 1)[n-k-j]\big)\\&=\varepsilon\sum_{j=0}^{n-k-1}{n-k\choose j}\diam(C)^j \omega_{n-k}.
            \end{split}
        \end{equation*}
        From symmetry, we obtain \begin{equation*}
            \begin{split}
                |\Psi_{L}(K)-\Psi_{L}(C)|\leq\varepsilon\sum_{j=0}^{n-k-1}{n-k\choose j}\big(\diam(C)+\diam(K)\big)^j \omega_{n-k},
            \end{split}
        \end{equation*}from which the first conclusion follows.

        Now, if $S$ is a countably $\mathcal{H}^k$-rectifiable set of finite $\mathcal{H}^k$-measure, then under the same assumptions  \begin{equation*}
            \begin{split}
                \left|\Phi_S(K)-\Phi_S(C)\right|&\leq\frac{1}{\omega_{n-k}}\int\limits_{S}|\Psi_{\textup{T}^k_xS}(K)-\Psi_{\textup{T}^k_xS}(C)|\dH{k}(x)\\&\leq \varepsilon\mathcal{H}^k(S)\sum_{j=0}^{n-k-1}{n-k\choose j}\big(\diam(C)+\diam(K)\big)^j,
            \end{split}
        \end{equation*} from which the second conclusion follows.
    \end{proof}
\end{lemma}
The following simple example illustrates why the assumption $\mathcal{H}^k(S) < \infty$ in Lemma \ref{continuity} is essential.

\begin{example}\label{E1}
    Let $x_n\coloneq\frac{1}{n}$ for $n\in\mathbb{N}$ and define \begin{align*}
        S_M\coloneq\bigcup_{n=1}^{M}\conv\left\{(-x_n, x_n), (x_n, x_n)\right\}&&\text{and}&&S\coloneq\{0\}\cup\bigcup_{M=1}^{\infty}S_M.
    \end{align*} Then $S$ is a countably $\mathcal{H}^1$-rectifiable compact subset of $\mathbb{R}^2$. 
    
    Let $C\coloneq[-1/2, 1/2]\times\{0\}$. Then clearly $$\mathcal{M}^1_{r, C}(S)=0\qquad\text{for every $r>0$.}$$ On the other hand, for $\mathcal{H}^1$-a.a. $x\in S$, it holds that $$\mathcal{H}^1(C_x)=0,$$ from which we see that $$\mathcal{M}^1_{C}(S)=\frac{1}{2}\int\limits_{S}\mathcal{H}^{1}(C_x)\dH{1}(x)=0.$$ However, if $\{C^{\varepsilon}\}_{\varepsilon>0}$ is a family of full-dimensional convex bodies in $\mathbb{R}^2$ satisfying $C^{\varepsilon}\xrightarrow[\varepsilon\to0_+]{d_H} C$, we have 
$$\mathcal{M}^{1}_{C^{\varepsilon}}(S)_*\geq\sup_{M\in\mathbb{N}}\mathcal{M}^{1}_{C^{\varepsilon}}(S_M)=\mathcal{H}^1\big(P_{\LO(e_2)}(C^{\varepsilon})\big)\sum_{n=1}^{\infty}\frac{1}{n}=\infty.$$ Here, $e_2=(0, 1)$. Similarly, $$\int\limits_S\mathcal{H}^1(C^{\varepsilon}_x)\dH{1}(x)\geq\sup_{M\in\mathbb{N}}\int\limits_{S_M}\mathcal{H}^1(C^{\varepsilon}_x)\dH{1}(x)=\mathcal{H}^1\big(P_{\LO(e_2)}(C^{\varepsilon})\big)\sum_{n=1}^{\infty}\frac{2}{n}=\infty,$$ from which we conclude that $$\frac{1}{2}\int\limits_{S}\mathcal{H}^{1}(C^{\varepsilon}_x)\dH{1}(x)\not\rightarrow\frac{1}{2}\int\limits_{S}\mathcal{H}^{1}(C_x)\dH{1}(x)$$ as $\varepsilon\to0_+.$
\end{example}

\begin{rem}\label{0inf2}
    Let $S\subseteq\mathbb{R}^n$ be a countably $\mathcal{H}^k$-rectifiable set, where $k\in\{1, \dots, n-1\}$. The functional $$\Phi_S\colon \mathcal{C}^n\to [0, \infty], \qquad \Phi_S(C)=\frac{1}{\omega_{n-k}}\int\limits_{S}\mathcal{H}^{n-k}(C_x)\dH{k}(x)$$ enjoys the following property: For $C, C^{\prime}\in\mathcal{C}^n$ with $\LO(C)=\LO(C^{\prime})$, it holds that $$a^{n-k}\Phi_S(C)=\Phi_S(aC)\leq\Phi_S(C^{\prime})\leq\Phi_S(bC)=b^{n-k}\Phi_S(C),$$ where $a$ and $b$ are positive reals such that $aC\subseteq C^{\prime}\subseteq bC.$ In particular, $$\Phi_S(C)=0\iff \Phi_S(C^{\prime})=0$$ and $$\Phi_S(C)=\infty\iff \Phi_S(C^{\prime})=\infty.$$ Furthermore, if $\dim(C)=n$, then $$\Phi_S(C)=\infty \iff \mathcal{H}^k(S)=\infty$$ and $$\Phi_S(C)=0 \iff \mathcal{H}^k(S)=0$$ since $\Phi_S\big(B(0, 1)\big)=\mathcal{H}^k(S).$
\end{rem}

\begin{lemma}\label{gong}
    Let $G\subseteq\mathbb{R}^k$ be open, $K\subseteq G$ be a nonempty compact set, and let $f\in\mathcal{C}^1(G; \mathbb{R}^{n-k})$. Define $$g(x)\coloneq\big(x, f(x)\big), \qquad x\in G.$$ Then the mapping
\[
g(x)\mapsto \textup{N}^{n-k}_{g(x)}g(G)
\]
is continuous from $g(K)$ to $G_{n-k}(\mathbb R^n)$.
    
    In particular, the mapping $$w\mapsto \mathcal{H}^{n-k}\big(P_{\textup{N}^{n-k}_w g(G)}(C)\big)$$ is continuous from $g(K)$ to $\mathbb{R}$, where $C\in\mathcal{C}^{n, n}$. \begin{proof}
        Since $\rank\big(Dg(x)\big)=k$ for any $x\in G$, we have $$P_{\textup{T}^k_{g(x)}g(G)}=Dg(x)\big(Dg(x)^{\top}Dg(x)\big)^{-1}Dg(x)^{\top}, \qquad x\in G.$$ Consequently, for any $x, y\in K$\begin{equation*}
        \begin{split}
           P_{\textup{T}^k_{g(x)}g(G)}-P_{\textup{T}^k_{g(y)}g(G)}&=\big(Dg(x)-Dg(y)\big)\big(Dg(x)^{\top}Dg(x)\big)^{-1}Dg(x)^{\top}\\&\quad\quad+Dg(y)\Big(\big(Dg(x)^{\top}Dg(x)\big)^{-1}-\big(Dg(y)^{\top}Dg(y)\big)^{-1}\Big)Dg(x)^{\top} \\&\quad\quad+Dg(y)\big(Dg(y)^{\top}Dg(y)\big)^{-1}\big(Dg(x)-Dg(y)\big)^{\top}.
        \end{split}
        \end{equation*}
        We see that $$Dg(x)^{\top}Dg(x)=I_k+Df(x)^{\top}Df(x),$$ from which we conclude that all eigenvalues of $Dg(x)^{\top}Dg(x)$ are bounded from below by $1$. Thus the eigenvalues of $\big(Dg(x)^{\top}Dg(x)\big)^{-1}$ are bounded from above by $1$, which in turn implies $$\left\Vert\big(Dg(x)^{\top}Dg(x)\big)^{-1}\right\Vert\leq 1.$$ Using this and $M\coloneq\sup_{x\in K}\Vert Dg(x)\Vert<\infty$, we conclude that \begin{equation*}
        \begin{split}\Vert P_{\textup{N}^{n-k}_{g(x)}g(G)}-P_{\textup{N}^{n-k}_{g(y)}g(G)}\Vert&=\Vert P_{\textup{T}^k_{g(x)}g(G)}-P_{\textup{T}^k_{g(y)}g(G)}\Vert\\&\leq 2M\Vert Dg(x)-Dg(y)\Vert\\&\quad\quad+M^ {2}\left\Vert \big(Dg(x)^{\top}Dg(x)\big)^{-1}-\big(Dg(y)^{\top}Dg(y)\big)^{-1}\right\Vert\\&\leq 2M\Vert Dg(x)-Dg(y)\Vert\\&\quad\quad+M^ {2}\left\Vert Dg(y)^{\top}Dg(y)-Dg(x)^{\top}Dg(x)\right\Vert\\&\leq (2M+2M^{3})\Vert Dg(x)-Dg(y)\Vert,
        \end{split}
        \end{equation*} from which the first conclusion follows.

        Now, let $C\in\mathcal{C}^{n, n}$. It suffices to show that the mapping $$L\mapsto \mathcal{H}^{n-k}\big(P_L(C)\big)$$ is continuous from $G_{n-k}(\mathbb{R}^n)$ to $\mathbb{R}$. To this end, fix $L, W\in G_{n-k}(\mathbb{R}^n)$. For an arbitrary pair $(c_1, c_2)\in C\times C$\begin{equation*}
            \begin{split}|P_W(c_1)-P_L(c_2)|&\leq  |P_W(c_1)-P_W(c_2)|+|P_W(c_2)-P_L(c_2)|\\&\leq |c_1-c_2|+\diam(C)\Vert P_W-P_L\Vert,\end{split}
        \end{equation*} which yields $$d_H\big(P_W(C), P_L(C)\big)\leq\diam(C)\Vert P_W-P_L\Vert.$$ Using this, we see that if $L_i\xrightarrow{i\to\infty} L$ in $G_{n-k}(\mathbb{R}^n),$ then $P_{L_{i}}(C)\xrightarrow{i\to\infty}P_{L}(C)$ in $\mathcal{C}^n$, which in turn implies $\mathcal{H}^{n-k}\big(P_{L_{i}}(C)\big)\xrightarrow{i\to\infty}\mathcal{H}^{n-k}\big(P_{L}(C)\big).$
    \end{proof}
\end{lemma}

\begin{lemma}\label{densit}
Let $K\subseteq\mathbb{R}^k$ be compact and let $F\in L^1(K)$ be nonnegative.
For every $\varepsilon>0$ and every $\theta\in(0, 1)$ there exist a compact set
$K_{\varepsilon, \theta}\subseteq K$ and a number
$\rho_{\varepsilon,\theta}>0$ such that
\[
\int\limits_{K\setminus K_{\varepsilon,\theta}}F(y)\dlambda{k}(y)<\varepsilon
\]
and
\[
\lambda^k\big(Q_{\mathbb{R}^k}(y, t)\setminus K\big)\leq \theta\,\lambda^k\big(Q_{\mathbb{R}^k}(y, t)\big)
\]
for every $y\in K_{\varepsilon, \theta}$ and every
$0<t<\rho_{\varepsilon, \theta}$.

Moreover, 
\[
Q_{\mathbb{R}^k}(y, t/2)\subseteq K\oplus B_{\mathbb R^k}(0, 3\sqrt{k}\theta^{1/k}t)
\]
for every $y\in K_{\varepsilon, \theta}$ and every
$0<t<\rho_{\varepsilon, \theta}$.
\end{lemma}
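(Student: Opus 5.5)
The plan is to combine the Lebesgue density theorem, Egorov's theorem and the absolute continuity of the integral, and then to use a simple volume comparison for the ``moreover'' part.

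\emph{Step 1 (uniform density).} By the Lebesgue density theorem, for $\lambda^k$-a.e.\ $y\in K$ we have
\[
\frac{\lambda^k\big(Q_{\mathbb{R}^k}(y,t)\setminus K\big)}{\lambda^k\big(Q_{\mathbb{R}^k}(y,t)\big)}\xrightarrow{t\to0_+}0 .
\]
The density theorem holds for cubes as well as balls, since the cubes $Q_{\mathbb{R}^k}(y,t)$ form a regular family. The functions
\[
\varphi_j(y)\coloneqq\sup_{0<t<1/j}\frac{\lambda^k\big(Q_{\mathbb{R}^k}(y,t)\setminus K\big)}{(2t)^k}
\]
are Borel measurable, because the ratio is continuous in $(y,t)$, so the supremum may be taken over rational $t$. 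They decrease to $0$ a.e.\ on $K$.

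\emph{Step 2 (choice of $K_{\varepsilon,\theta}$).} Since $F\in L^1(K)$, absolute continuity of the integral gives $\delta>0$ such that $\int_A F\dlambda{k}<\varepsilon$ whenever $\lambda^k(A)<\delta$. Egorov's theorem on the finite-measure set $K$ yields a measurable $E\subseteq K$ with $\lambda^k(K\setminus E)<\delta/2$ on which $\varphi_j\to0$ uniformly. Inner regularity then gives a compact $K_{\varepsilon,\theta}\subseteq E$ with $\lambda^k(K\setminus K_{\varepsilon,\theta})<\delta$, which yields the integral bound. Choose $j$ with $\varphi_j\le\theta$ on $K_{\varepsilon,\theta}$ and set $\rho_{\varepsilon,\theta}\coloneqq1/j$; this gives the density bound.

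\emph{Step 3 (the ``moreover'' part).} Fix $y\in K_{\varepsilon,\theta}$, $0<t<\rho_{\varepsilon,\theta}$, and $z\in Q_{\mathbb{R}^k}(y,t/2)$. Argue by contradiction: suppose $\dist(z,K)>3\sqrt{k}\,\theta^{1/k}t$. Put $s\coloneqq3\theta^{1/k}t$. The cube $Q_{\mathbb{R}^k}(z,s)$ has circumradius $\sqrt{k}\,s$, so it misses $K$.

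\begin{itemize}
\item If $s\le t/2$, this cube lies inside $Q_{\mathbb{R}^k}(y,t)$. Hence
\[
(2s)^k\le\lambda^k\big(Q_{\mathbb{R}^k}(y,t)\setminus K\big)\le\theta(2t)^k .
\]
This forces $3^k\theta\le\theta$, a contradiction.
\item If $s>t/2$, shrink the cube to $Q_{\mathbb{R}^k}(z,t/2)\subseteq Q_{\mathbb{R}^k}(y,t)$, which also misses $K$. Then $t^k\le\theta(2t)^k$, so $\theta\ge2^{-k}$. But then $3\sqrt{k}\,\theta^{1/k}t\ge\tfrac32\sqrt{k}\,t$, which exceeds $\dist(z,y)\le\tfrac{\sqrt{k}}{2}t$, while $y\in K$. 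This is again a contradiction.
\end{itemize}

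The constants have slack, so this step should go through. The main points needing care are the measurability in Step 1 (resolved by restricting to rational $t$) and the case split in Step 3.
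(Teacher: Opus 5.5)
Your proposal is correct and follows essentially the paper's route. Steps 1 and 2 are the paper's argument (Lebesgue density for cubes, Egorov's theorem and absolute continuity of the integral), with your measurability check for $\varphi_j$ and the inner-regularity step filling in details the paper leaves implicit. Step 3 is the same volume-comparison contradiction, and your case split mirrors the paper's reduction to $\theta^{1/k}<1/6$ (you use the cube of half-side $3\theta^{1/k}t$ instead of $\tfrac32\theta^{1/k}t$). Your second case can be closed immediately: $s>t/2$ already gives $3\sqrt{k}\,\theta^{1/k}t>\tfrac{\sqrt{k}}{2}t\ge|z-y|\ge\dist(z,K)$, with no volume estimate needed.
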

\begin{proof}
Since $K$ is compact, it has finite Lebesgue measure. By the Lebesgue density
theorem, for $\lambda^k$-a.e. $y\in K$ we have
\[
\frac{\lambda^k\big(Q_{\mathbb{R}^k}(y, t)\setminus K\big)}{\lambda^k\big(Q_{\mathbb{R}^k}(y, t)\big)}\xrightarrow{t\to0_+}0.
\]
By Egorov's theorem and the absolute continuity of the integral of $F$, there
exists a compact set $K_{\varepsilon, \theta}\subseteq K$ such that
\[
\int\limits_{K\setminus K_{\varepsilon, \theta}}F(y)\dlambda{k}(y)<\varepsilon
\]
and such that the above convergence is uniform on $K_{\varepsilon, \theta}$.
Thus there exists $\rho_{\varepsilon, \theta}>0$ such that
\[
\lambda^k\big(Q_{\mathbb{R}^k}(y, t)\setminus K\big)\leq \theta\,\lambda^k\big(Q_{\mathbb{R}^k}(y, t)\big)
\]
for every $y\in K_{\varepsilon, \theta}$ and every
$0<t<\rho_{\varepsilon, \theta}$.

It remains to prove the inclusion. Let $y\in K_{\varepsilon, \theta}$,
let $0<t<\rho_{\varepsilon, \theta}$, and let $z\in Q_{\mathbb{R}^k}(y, t/2)$. Since
$y\in K$, we have
\[
\dist(z, K)\leq |z-y|\leq \frac{\sqrt{k}}{2}t.
\]
Thus, the desired estimate is trivial if
$\theta^{1/k}\geq 1/6$. We may therefore assume that
$$\theta^{1/k}<1/6.$$

Suppose, to the contrary, that $z\notin K\oplus B_{\mathbb{R}^k}(0, 3\sqrt{k}\theta^{1/k}t).$ Equivalently,
\[
\dist(z, K)>3\sqrt{k}\theta^{1/k}t.
\]
Then the cube
\[
Q_{\mathbb{R}^k}\big(z, 3\theta^{1/k}t/2\big)
\]
is contained in $Q_{\mathbb{R}^k}(y, t)\setminus K$. Indeed, since $z\in Q_{\mathbb{R}^k}(y,t/2)$ and $\theta^{1/k}<1/6$, we have \[ \frac{3}{2}\theta^{1/k}t<\frac{t}{4}, \] and hence \[ Q_{\mathbb{R}^k}\big(z,3\theta^{1/k}t/2\big)\subseteq Q_{\mathbb{R}^k}(y,t). \] Moreover, if $u\in Q_{\mathbb{R}^k}\big(z,3\theta^{1/k}t/2\big)$, then \[ |u-z|\leq \sqrt{k}\,\frac{3}{2}\theta^{1/k}t <3\sqrt{k}\theta^{1/k}t<\dist(z,K), \] and therefore $u\notin K$.

Hence
\[
\lambda^k\big(Q_{\mathbb{R}^k}(y, t)\setminus K\big)
\geq
(3\theta^{1/k}t)^k=\theta (3t)^k.
\]

On the other hand,
\[
\lambda^k\big(Q_{\mathbb{R}^k}(y, t)\setminus K\big)
\leq
\theta\,\lambda^k\big(Q_{\mathbb{R}^k}(y, t)\big)
=
\theta(2t)^k,
\]
which gives a contradiction. Therefore
\[
\dist(z, K)\leq 3\sqrt{k}\theta^{1/k}t
\]
for every $z\in Q_{\mathbb{R}^k}(y, t/2)$, which proves
\[
Q_{\mathbb{R}^k}(y, t/2)\subseteq K\oplus B_{\mathbb R^k}(0, 3\sqrt{k}\theta^{1/k}t).
\]
\end{proof}

The following two lemmata generalize the results of Federer (\cite{federer}) and Lussardi and Villa (\cite{Villa2}). 

We note that the assumption $C\in\mathcal{C}^{n,n}$ in the following lemma can be relaxed to $C\in\mathcal{C}^{n}$, as follows from Proposition~\ref{LB} proved below.

\begin{lemma}\label{krecti}
Let $C\in\mathcal{C}^{n, n}$, and let $S\subseteq\mathbb{R}^n$ be a compact
subset of a $\mathcal{C}^1$ $k$-graph, where $k\in\{1, \dots, n-1\}$.
Then
\[
\mathcal{M}^k_{C}(S)_*
\geq
\frac{1}{\omega_{n-k}}
\int\limits_S\mathcal{H}^{n-k}(C_x)\dH{k}(x).
\]
\begin{proof}
Choose $\alpha, \beta>0$ such that
\[
B(0, \alpha)\subseteq C\subseteq B(0, \beta).
\]

After a rotation of $\mathbb{R}^n$, we may assume that
\[
S\subseteq \big\{\big(y, h(y)\big) : y\in G\big\},
\]
where $G\subseteq\mathbb R^k$ is an open bounded set and
$h\in\mathcal C^1(G; \mathbb R^{n-k})$. For brevity, define
\[
g(y)\coloneq \big(y, h(y)\big),\qquad y\in G,
\]
and put
\[
K\coloneq g^{-1}(S).
\]
Then $K$ is compact and $K\subseteq G$.

The function
\[
y\mapsto J_kg(y)\mathcal H^{n-k}
\big(P_{\textup{N}^{n-k}_{g(y)}g(G)}(C)\big)
\]
is continuous on $G$.

It is enough to prove
\[
\mathcal M_C^k(S)_*
\geq
\frac1{\omega_{n-k}}
\int\limits_K
J_kg(y)\mathcal H^{n-k}
\big(P_{\textup{N}^{n-k}_{g(y)}g(G)}(C)\big)\dlambda{k}(y).
\]
Indeed, by the area formula and by the fact that, for $\mathcal{H}^k$-a.e.
point of $S$, the approximate tangent space of $S$ agrees with the tangent
space of the ambient $\mathcal{C}^1$ $k$-graph,
\[
\int\limits_K
J_kg(y)\mathcal H^{n-k}
\big(P_{\textup{N}^{n-k}_{g(y)}g(G)}(C)\big)\dlambda{k}(y)
=
\int\limits_S \mathcal H^{n-k}(C_x)\dH{k}(x).
\]

Fix $\sigma\in(0, 1)$ and choose
\[
\eta\in\left(0, \min\left\{1, (4\beta)^{-1}\right\}\right).
\]
Further, choose $\delta_0>0$ such that
\[
K\oplus B_{\mathbb R^k}(0, \delta_0)\subseteq G,
\]and choose $\theta\in(0, 1)$ so small that
\[
\textup{Lip}(g)\theta^{1/k}\eta^{-1}6\sqrt{k}
\leq
\frac{\sigma\alpha}{4},
\]
where
$\textup{Lip}(g)$ is a Lipschitz constant of $g$ on $K\oplus B_{\mathbb R^k}(0, \delta_0).$

Apply Lemma~\ref{densit} to the compact set $K$ and to the function
\[
y\mapsto J_kg(y)\mathcal H^{n-k}
\big(P_{\textup{N}^{n-k}_{g(y)}g(G)}(C)\big).
\]
Thus, there exists a compact set $K_{\sigma, \theta}\subseteq K$ such that
\[
\int\limits_{K\setminus K_{\sigma,\theta}}
J_kg(y)\mathcal{H}^{n-k}
\big(P_{\textup{N}^{n-k}_{g(y)}g(G)}(C)\big)\dlambda{k}(y)
<\sigma
\]
and there exists $\rho>0$ such that
\[
\lambda^k\big(Q_{\mathbb{R}^k}(y, t)\setminus K\big)
\leq
\theta\lambda^k\big(Q_{\mathbb{R}^k}(y, t)\big)
\]
for every $y\in K_{\sigma,\theta}$ and every $0<t<\rho$. Moreover,
\[
Q_{\mathbb{R}^k}(y, t/2)
\subseteq
K\oplus B_{\mathbb R^k}\big(0, 3\sqrt{k}\theta^{1/k}t\big)
\]
for every such $y$ and $t$.

Let $r>0$ be small and set
\[
\ell\coloneq \eta^{-1}r.
\]
We assume that $r$ is so small that
\[
2\ell<\rho,
\qquad
\sqrt{k}\ell<\delta_0.
\]

Let $\mathcal{Q}_r$ be the finite family of cubes of a fixed grid in
$\mathbb R^k$ of side length $\ell$ which intersect $K_{\sigma,\theta}$.
For each $Q\in\mathcal{Q}_r$, choose a point
\[
y_Q\in Q\cap K_{\sigma,\theta}.
\]
Define the affine map
\[
g_Q(z)\coloneq g(y_Q)+Dg(y_Q)(z-y_Q),
\qquad z\in\mathbb R^k.
\]
Since $g$ is a graph map, the first $k$ coordinates of $g_Q(z)$ are exactly
the coordinates of $z$.

For each cube $Q\in\mathcal Q_r$, define $Q^-$ to be the cube with the same
center as $Q$ and side length $\ell-4\beta r>0$; this is positive because
$\ell=\eta^{-1}r$ and $\eta<(4\beta)^{-1}$.

We first prove that
\[
g_Q(Q^-)\oplus rC
\subseteq
S\oplus (1+\sigma)rC
\qquad\text{for all }Q\in\mathcal{Q}_r
\]
provided $r>0$ is sufficiently small.

Let $Q\in\mathcal{Q}_r$ and let $z\in Q^-$. Since $Q$ has side length $\ell$
and $y_Q\in Q$, we have
\[
Q\subseteq Q_{\mathbb{R}^k}(y_Q, \ell).
\]
Hence \(z\in Q_{\mathbb{R}^k}(y_Q,\ell)=Q_{\mathbb{R}^k}(y_Q,2\ell/2)\). Applying the last part of
Lemma~\ref{densit} with \(t=2\ell<\rho\), we find \(z_0\in K\) such that
\[
|z-z_0|\leq 6\sqrt{k}\theta^{1/k}\ell.
\]
Thus
\[
|g(z)-g(z_0)|
\leq
6\sqrt{k}\textup{Lip}(g)\theta^{1/k}\ell
=
6\sqrt{k}\textup{Lip}(g)\theta^{1/k}\eta^{-1}r
\leq
\frac{\sigma\alpha}{4}r.
\]
On the other hand, by the uniform continuity of \(Dg\) on
\(K\oplus B_{\mathbb R^k}(0,\delta_0)\), for all sufficiently small \(r\),
\[
|g(z)-g_Q(z)|
\leq
\frac{\sigma\alpha}{4}r.
\]
Consequently,
\[
|g_Q(z)-g(z_0)|
\leq
\frac{\sigma\alpha}{2}r
<
\sigma\alpha r.
\]
Since \(B(0,\alpha)\subseteq C\), we obtain
\[
g_Q(z)\in g(z_0)+\sigma rC\subseteq S\oplus \sigma rC.
\]
Therefore
\[
g_Q(z)+rC
\subseteq
S\oplus \sigma rC\oplus rC
\subseteq
S\oplus(1+\sigma)rC.
\]
This proves the inclusion.

Next, we prove that the sets
\[
g_Q(Q^-)\oplus rC,
\qquad Q\in\mathcal Q_r,
\]
are pairwise disjoint. Let
\[
\pi:\mathbb R^n\to\mathbb R^k
\]
denote the projection onto the first \(k\) coordinates. Since \(C\subseteq B(0, \beta)\),
if
\[
w\in g_Q(Q^-)\oplus rC,
\]
then
\[
\pi(w)\in Q^-\oplus B_{\mathbb R^k}(0, \beta r).
\]
By the definition of \(Q^-\), the set $Q^-\oplus B_{\mathbb R^k}(0, \beta r)$ is contained in the interior of \(Q\).
The interiors of distinct cubes of the grid are disjoint. Hence the sets
\(g_Q(Q^-)\oplus rC\), \(Q\in\mathcal Q_r\), are pairwise disjoint.

It follows that
\[
\lambda^n\big(S\oplus(1+\sigma)rC\big)
\geq
\sum_{Q\in\mathcal Q_r}
\lambda^n\big(g_Q(Q^-)\oplus rC\big).
\]

For each \(Q\in\mathcal Q_r\), the set \(g_Q(Q^-)\) is a compact convex set
of dimension \(k\). Using the polynomial expansion \eqref{pp1} for \(\lambda^n(g_Q(Q^-)\oplus rC)\), together with \eqref{pp3} and
the identity established in Lemma~\ref{inter}, see \eqref{pp2}, we obtain
\[
\frac{\lambda^n(g_Q(Q^-)\oplus rC)}{r^{n-k}}
\geq
\int\limits_{g_Q(Q^-)}
\mathcal{H}^{n-k}
\big(P_{\textup{N}^{n-k}_{g(y_Q)}g(G)}(C)\big)\dH{k}(y).
\]
By the area formula for the injective affine map \(g_Q\),
\[
\int\limits_{g_Q(Q^-)}
\mathcal H^{n-k}
\big(P_{\textup{N}^{n-k}_{g(y_Q)}g(G)}(C)\big)\dH{k}(y)
=
\int\limits_{Q^-}
J_kg_Q(z)
\mathcal H^{n-k}
\big(P_{\textup{N}^{n-k}_{g(y_Q)}g(G)}(C)\big)\dlambda{k}(z).
\]
Since \(Dg_Q=Dg(y_Q)\), we have
\[
J_kg_Q(z)=J_kg(y_Q)
\]
for every \(z\in Q^-\). Hence
\[
\begin{split}
\int\limits_{Q^-}
J_kg_Q(z)
\mathcal H^{n-k}
\big(P_{\textup{N}^{n-k}_{g(y_Q)}g(G)}(C)\big)\dlambda{k}(z)
=
J_kg(y_Q)
\mathcal H^{n-k}
\big(P_{\textup{N}^{n-k}_{g(y_Q)}g(G)}(C)\big)\lambda^k(Q^-).
\end{split}
\]
Thus
\begin{equation}\label{pp7}
\begin{split}
\frac{\lambda^n\big(S\oplus(1+\sigma)rC\big)}{r^{n-k}}
&\geq
\sum_{Q\in\mathcal{Q}_r}
J_kg(y_Q)
\mathcal{H}^{n-k}
\big(P_{\textup{N}^{n-k}_{g(y_Q)}g(G)}(C)\big)\lambda^k(Q^-).
\end{split}
\end{equation}

Set
\[
U_r\coloneq\bigcup_{Q\in\mathcal Q_r}Q^-.
\]
By the uniform continuity of the function
\[
y\mapsto
J_kg(y)\mathcal H^{n-k}
\big(P_{\textup{N}^{n-k}_{g(y)}g(G)}(C)\big)
\]
on \(K\oplus B_{\mathbb R^k}(0, \delta_0)\), for all sufficiently small \(r>0\),
\begin{equation*}
\begin{split}
\Big|
J_kg(z)\mathcal{H}^{n-k}
\big(P_{\textup{N}^{n-k}_{g(z)}g(G)}(C)\big)-
J_kg(y_Q)\mathcal{H}^{n-k}
\big(P_{\textup{N}^{n-k}_{g(y_Q)}g(G)}(C)\big)
\Big|
\leq\sigma
\end{split}
\end{equation*}
whenever \(z\in Q^-\). Therefore
\begin{equation}\label{pp8}
\begin{split}
&\sum_{Q\in\mathcal Q_r}
J_kg(y_Q)\mathcal{H}^{n-k}
\big(P_{\textup{N}^{n-k}_{g(y_Q)}g(G)}(C)\big)
\lambda^k(Q^-)
\\
&\quad\geq
\int\limits_{U_r}
J_kg(z)\mathcal{H}^{n-k}
\big(P_{\textup{N}^{n-k}_{g(z)}g(G)}(C)\big)\dlambda{k}(z)
-\sigma\lambda^k(U_r).
\end{split}
\end{equation}
For all small enough $r>0$, we have \(U_r\subseteq K\oplus B_{\mathbb{R}^k}(0, \delta_0)\), and hence
\begin{equation}\label{pp10}
    \lambda^k(U_r)\leq\lambda^k\big(K\oplus B_{\mathbb{R}^k}(0, \delta_0)\big).
\end{equation}

Further, let $R$ be a fixed cube of side length $L$ containing $K\oplus B_{\mathbb{R}^k}(0, \delta_0)$ and let $\mathcal S_\ell$ denote the skeleton of the whole grid of side length $\ell$. 
The cube $R$ and its side length $L$ are fixed independently of $r$.

We have
\[
K_{\sigma,\theta}\setminus U_r
\subseteq
\big\{w\in R:\dist(w,\mathcal S_\ell)<2\sqrt{k}\beta r\big\}.
\]
Indeed, if $w\in K_{\sigma,\theta}\setminus U_r$, then $w$ belongs to some
grid cube $Q\in\mathcal Q_r$, but $w\notin Q^-$. Hence its supremum distance
from $\partial Q$ is less than $2\beta r$. Therefore its Euclidean distance
from the grid skeleton is less than $2\sqrt{k}\beta r$.

In each coordinate
direction, at most
\[
\frac{L}{\ell}+2\leq 3\frac{L}{\ell}
\]
grid hyperplanes have their \(2\sqrt{k}\beta r\)-neighbourhood intersecting
\(R\), provided \(r>0\) is sufficiently small. The \(2\sqrt{k}\beta r\)-
neighbourhood of one such hyperplane intersects \(R\) in a strip of volume at
most
\[
4\sqrt{k}\beta r\,L^{k-1}.
\]
Therefore, in one coordinate direction, the total volume of the corresponding
strips is at most
\[
3\frac{L}{\ell}\,4\sqrt{k}\beta r\,L^{k-1}
=
12\sqrt{k}\beta L^k\frac{r}{\ell}
=
12\sqrt{k}\beta L^k\eta.
\]
Since there are \(k\) coordinate directions, we obtain
\[
\lambda^k(K_{\sigma,\theta}\setminus U_r)
\leq
12k\sqrt{k}\beta L^k\eta.
\]

Set
\[
M\coloneq
\sup_{z\in K\oplus B_{\mathbb{R}^k}(0, \delta_0)}
J_kg(z)\mathcal{H}^{n-k}
\big(P_{\textup{N}^{n-k}_{g(z)}g(G)}(C)\big)
<\infty.
\]
Then
\begin{equation}\label{pp9}
\begin{split}
&\int\limits_{U_r}
J_kg(z)\mathcal H^{n-k}
\big(P_{\textup{N}^{n-k}_{g(z)}g(G)}(C)\big)\dlambda{k}(z)
\\
&\quad\geq
\int\limits_{K_{\sigma,\theta}}
J_kg(z)\mathcal H^{n-k}
\big(P_{\textup{N}^{n-k}_{g(z)}g(G)}(C)\big)\dlambda{k}(z)
-
M 12k\sqrt{k}\beta L^k\eta.
\end{split}
\end{equation}
Combining \eqref{pp7}, \eqref{pp8}, \eqref{pp10} and \eqref{pp9}, we get
\[
\begin{split}
\frac{\lambda^n\big(S\oplus(1+\sigma)rC\big)}{r^{n-k}}
&\geq
\int\limits_{K_{\sigma,\theta}}
J_kg(z)\mathcal{H}^{n-k}
\big(P_{\textup{N}^{n-k}_{g(z)}g(G)}(C)\big)\dlambda{k}(z)
\\
&\quad-
M 12k\sqrt{k}\beta L^k\eta
-
\sigma\lambda^k\big(K\oplus B_{\mathbb{R}^k}(0, \delta_0)\big)
\end{split}
\]
for all sufficiently small \(r>0\).

Since
\[
(1+\sigma)^{n-k}\mathcal M^k_{(1+\sigma)r,C}(S)
=
\frac{\lambda^n(S\oplus(1+\sigma)rC)}
{\omega_{n-k}r^{n-k}},
\]
the preceding estimate can be written as
\[
\begin{split}
\omega_{n-k}(1+\sigma)^{n-k}
\mathcal M^k_{(1+\sigma)r,C}(S)
&\geq
\int\limits_{K_{\sigma,\theta}}
J_kg(z)\mathcal H^{n-k}
\big(P_{\textup{N}^{n-k}_{g(z)}g(G)}(C)\big)\dlambda{k}(z)
\\
&\quad-
M 12k\sqrt{k}\beta L^k\eta
-
\sigma\lambda^k\big(K\oplus B_{\mathbb{R}^k}(0, \delta_0)\big).
\end{split}
\]
Taking the limit inferior as \(r\to0_+\) yields
\[
\begin{split}
\omega_{n-k}(1+\sigma)^{n-k}
\mathcal M^k_C(S)_*
&\geq
\int\limits_{K_{\sigma,\theta}}
J_kg(z)\mathcal H^{n-k}
\big(P_{\textup{N}^{n-k}_{g(z)}g(G)}(C)\big)\dlambda{k}(z)
\\
&\quad-
M 12k\sqrt{k}\beta L^k\eta
-
\sigma\lambda^k\big(K\oplus B_{\mathbb{R}^k}(0, \delta_0)\big).
\end{split}
\]
By the choice of \(K_{\sigma,\theta}\),
\[
\int\limits_{K\setminus K_{\sigma,\theta}}
J_kg(z)\mathcal H^{n-k}
\big(P_{\textup{N}^{n-k}_{g(z)}g(G)}(C)\big)\dlambda{k}(z)
<\sigma.
\]
Therefore
\[
\begin{split}
\omega_{n-k}(1+\sigma)^{n-k}
\mathcal M^k_C(S)_*
&\geq
\int\limits_K
J_kg(z)\mathcal H^{n-k}
\big(P_{\textup{N}^{n-k}_{g(z)}g(G)}(C)\big)\dlambda{k}(z)
\\
&\quad-
\sigma
-
M 12k\sqrt{k}\beta L^k\eta
-
\sigma\lambda^k\big(K\oplus B_{\mathbb{R}^k}(0, \delta_0)\big).
\end{split}
\]
Letting \(\eta\to0_+\) and then \(\sigma\to0_+\), we conclude that
\[
\omega_{n-k}\mathcal{M}^k_C(S)_*
\geq
\int\limits_K
J_kg(z)\mathcal{H}^{n-k}
\big(P_{\textup{N}^{n-k}_{g(z)}g(G)}(C)\big)\dlambda{k}(z).
\]
Thus
\[
\mathcal{M}^k_C(S)_*
\geq
\frac1{\omega_{n-k}}
\int\limits_K
J_kg(z)\mathcal H^{n-k}
\big(P_{\textup{N}^{n-k}_{g(z)}g(G)}(C)\big)\dlambda{k}(z).
\]
Finally, the area formula gives
\[
\int\limits_K
J_kg(z)\mathcal{H}^{n-k}
\big(P_{\textup{N}^{n-k}_{g(z)}g(G)}(C)\big)\dlambda{k}(z)
=
\int\limits_S\mathcal{H}^{n-k}(C_x)\dH{k}(x).
\]
Therefore
\[
\mathcal{M}^k_C(S)_*
\geq
\frac1{\omega_{n-k}}
\int\limits_S\mathcal{H}^{n-k}(C_x)\dH{k}(x),
\]
as required.
\end{proof}
\end{lemma}

The following result provides an upper bound for the upper 
$k$-dimensional anisotropic Minkowski content of a compact $\mathcal{C}^1$ 
$k$-graph. Our argument is inspired by \cite[Lemma 3.2.38]{federer}.
\begin{lemma}\label{krect}
        Let $C\in\mathcal{C}^n$, and let $S\subseteq\mathbb{R}^n$ be a compact subset of a $\mathcal{C}^1$ $k$-graph, where $k\in\{1, \dots, n-1\}$. Then we have \begin{align*}
            \mathcal{M}^k_{C}(S)^*\leq\frac{1}{\omega_{n-k}}\int\limits_{S}\mathcal{H}^{n-k}(C_x)\dH{k}(x).
        \end{align*}
        \begin{proof} Let $G\subseteq\mathbb{R}^k$ be open and bounded, let $h\in\mathcal{C}^1(G; \mathbb{R}^{n-k})$, and let $K\subseteq G$ be a compact set such that $$\big\{\big(x, h(x)\big) : x\in K\}=S.$$ For the sake of brevity, set $$f(x)\coloneq \big(x, h(x)\big), \quad x\in G.$$ Hence $f(K)=S$ and, without loss of generality, we can assume that  $f$ is bi-Lipschitz on $G$.

        In this proof, for $x\in G$ we write $C_{f(x)}$ for
\[
P_{(\textup{T}^k_{f(x)}f(G))^\perp}(C).
\]
For $\mathcal H^k$-a.e. point of $S=f(K)$, this agrees with the notation
introduced above.

        Since $$\mathcal{H}^k(S)=\mathcal{H}^k\big(f(K)\big)\leq\textup{Lip}(f)^k\lambda^k(K)<\infty,$$ it suffices to show the conclusion for all convex bodies in $\mathcal{C}^{n, n}$ due to Lemma \ref{continuity} and the fact that $\mathcal{M}^k_{r, C}(S)\leq \mathcal{M}^k_{r, C^j}(S)$ for every $r>0$, where $C^j\coloneqq C\oplus B(0, j^{-1})$. So, let $C\in\mathcal{C}^{n, n}$. Choose positive real numbers $\alpha$ and $\beta$ such that \begin{equation}\label{constants}
            B(0, \alpha)\subseteq C \subseteq B(0, \beta).
        \end{equation}

            Fix some $\delta_0>0$ such that $K\oplus  B_{\mathbb{R}^k}(0, \delta_0)\subseteq G$. For every $p\in\mathbb{N}$, there exists some $\delta\in (0, \delta_0)$ such that 
            \begin{equation}\label{eq1}
                \lambda^k\Big(\big(K\oplus  B_{\mathbb{R}^k}(0, \delta)\big)\setminus K\Big)<\frac{1}{p}
            \end{equation} and for every $a\in K$ and every $x\in B(a, \delta)$ \begin{align}\label{eq2}
                |f(x)-f(a)-Df(a)(x-a)|\leq\frac{|x-a|}{p^2}.
            \end{align} and \begin{align}\label{eq00}
               |J_k{f(x)}-J_k{f(a)}|<\frac{1}{p},&&&&|\mathcal{H}^{n-k}(C_{f(x)})-\mathcal{H}^{n-k}(C_{f(a)})|<\frac{1}{p},
            \end{align} where we used the fact that the functions
$$
x\longmapsto J_kf(x)
\qquad\text{and}\qquad
x\longmapsto\mathcal H^{n-k}(C_{f(x)})
$$
are continuous on $G$ (the continuity of the latter follows from
Lemma~\ref{gong}), and hence uniformly continuous on the compact set
$K\oplus B_{\mathbb R^k}(0, \delta_0)$.

            Let $r\in(0, \min\{1, (\sqrt{k}p)^{-1}\delta \})$ and consider a tessellation of $\mathbb{R}^k$ into
 cubes of side length $pr$, and let $\mathcal{C}$ be the set of cubes from the tessellation
 hitting $K$. Then $ K\subseteq\bigcup \mathcal{C}$, and, taking \eqref{eq1} into account, \begin{align}\label{Eq4}
     \lambda^k\bigg(\Big(\bigcup\mathcal{C}\Big)\setminus K\bigg)<\frac{1}{p}.
 \end{align}

 For any cube $Q\in\mathcal{C}$, pick a point $a\in Q\cap K$ and consider the affine map $A_Q\colon\mathbb{R}^k\to\mathbb{R}^n$ given by $$A_Q(z)\coloneq f(a)+Df(a)(z-a), \quad\text{$z\in\mathbb{R}^k$.}$$ Observe that since $\rank Df(a)=k$, the function $A_Q$ is injective.

Note that if $z\in f(Q)\oplus rC$, then there exists some $y\in Q$ such that $z-f(y)\in rC$. Since $\diam(Q)\leq\sqrt{k}pr,$ \eqref{eq2} ensures that $$|f(y)-A_Q(y)|\leq\frac{\sqrt{k}r}{p}.$$  Thus \begin{equation}
    z-A_Q(y)\in rC \oplus B\big(0, \sqrt{k}rp^{-1}\big)\subseteq (1+\eta)r C
\end{equation}
with $\eta\coloneq\sqrt{k}(p\alpha)^{-1}$. It follows that \begin{equation}\label{Eq5}
    \lambda^n\big(f(Q)\oplus rC\big)\leq\lambda^n\big(A_Q(Q)\oplus (1+\eta)rC\big).
\end{equation} Since $\rank Df(x)=k$ at each point $x\in G$, the set $A_Q(Q)$ is a convex body in $\mathbb{R}^n$ of dimension exactly $k$.

Note that \begin{equation*}
    \diam\big(A_Q(Q)\big)\leq \textup{Lip}(f)\diam(Q)=\textup{Lip}(f)pr\sqrt{k}\eqqcolon t_r,
\end{equation*} which in turn implies \begin{equation}\label{pp4}
    A_Q(Q)\subseteq B(z_{Q}, t_r)=z_{Q}+B(0, t_r)
\end{equation} for some $z_{Q}\in\mathbb{R}^n$.

Using \eqref{pp4}, the monotonicity, translation invariance, and homogeneity properties in \eqref{pp5}, together with $\lambda^k(Q)=(pr)^k$ and $r\leq1$, we conclude  \newcommand\myeqhuhuj{\mathrel{\stackrel{\makebox[0pt]{\mbox{\normalfont\tiny (\ref{Eq5})}}}{\leq}}}\newcommand\myeqhuhujuj{\mathrel{\stackrel{\makebox[0pt]{\mbox{\normalfont\tiny (\ref{pp1})}}}{=}}}\newcommand\myeqhuhujujuj{\mathrel{\stackrel{\makebox[0pt]{\mbox{\normalfont\tiny (\ref{pp3})+(\ref{pp2})}}}{=}}}\newcommand\myeqhuhujujujuj{\mathrel{\stackrel{\makebox[0pt]{\mbox{\normalfont\tiny (\ref{constants})+(\ref{pp4})}}}{\leq}}}\begin{equation}\label{Eq1}
    \begin{split}
        \mathcal{M}^k_{r, C}\big(f(Q)\big)&\myeqhuhuj (1+\eta)^{n-k}\mathcal{M}^k_{(1+\eta)r, C}\big(A_Q(Q)\big)\\&\myeqhuhujuj\frac{1}{\omega_{n-k} r^{n-k}}\sum_{j=0}^k {n\choose j}V_n\big(A_Q(Q)[j], C[n-j]\big)r^{n-j}(1+\eta)^{n-j}\\&\myeqhuhujujuj\,\,\,\,\frac{(1+\eta)^{n-k}}{\omega_{n-k}}\int\limits_{A_Q(Q)}\mathcal{H}^{n-k}(C_x)\dH{k}(x)\\&\quad\quad+\frac{1}{\omega_{n-k}}\sum_{j=0}^{k-1} {n\choose j}V_n\big(A_Q(Q)[j], C[n-j]\big)r^{k-j}(1+\eta)^{n-j}\\&\myeqhuhujujujuj\,\,\,\,\,\frac{(1+\eta)^{n-k}}{\omega_{n-k}}\int\limits_{A_Q(Q)}\mathcal{H}^{n-k}(C_x)\dH{k}(x)\\&\quad\quad+\frac{1}{\omega_{n-k}}\sum_{j=0}^{k-1} {n\choose j}V_n\big(B(z_{Q}, t_r)[j], B(0, \beta)[n-j]\big)r^{k-j}(1+\eta)^{n-j}\\&\leq\frac{(1+\eta)^{n-k}}{\omega_{n-k}}\int\limits_{A_Q(Q)}\mathcal{H}^{n-k}(C_{f(a)})\dH{k}(x)\\&\quad\quad+\frac{L}{p\omega_{n-k}}(1+\eta)^{n}\lambda^k(Q),
    \end{split}
\end{equation} where $$L\coloneq\omega_{n}\sum_{j=0}^{k-1} {n\choose j}\textup{Lip}(f)^j\sqrt{k}^j\beta^{n-j}.$$

 The area formula and the injectivity of $A_Q$ imply
 \begin{equation*}
    \begin{split}
        \int\limits_{A_Q(Q)}\mathcal{H}^{n-k}(C_{f(a)})\dH{k}(x)=\int\limits_{Q}J_kA_Q(y)\mathcal{H}^{n-k}(C_{f(a)})\dlambda{k}(y).
        \end{split}
        \end{equation*}
        For every $x\in Q$, it holds that $$J_kA_Q(x)=J_kA_Q(a)=\sqrt{\det\big(Df(a)^{\top}Df(a)\big)}=J_kf(a),$$ which gives \newcommand\myeqhuhuju{\mathrel{\stackrel{\makebox[0pt]{\mbox{\normalfont\tiny (\ref{eq00})}}}{\leq}}}   \begin{equation}\label{Eq2}
    \begin{split}
        \int\limits_{Q}J_kA_Q(y)\mathcal{H}^{n-k}(C_{f(a)})\dlambda{k}(y)&=\int\limits_{Q}J_kf(a)\mathcal{H}^{n-k}(C_{f(a)})\dlambda{k}(x)\\&\myeqhuhuju\int\limits_{Q}\Big(J_kf(x)+\frac{1}{p}\Big)\Big(\mathcal{H}^{n-k}(C_{f(x)})+\frac{1}{p}\Big)\dlambda{k}(x)\\&\leq\int\limits_{Q}J_kf(x)\mathcal{H}^{n-k}(C_{f(x)})\dlambda{k}(x)\\&\quad\quad+\frac{\omega_{n-k}\diam(C)^{n-k}+M+1}{p}\lambda^k(Q), 
        \end{split}
        \end{equation}
where $$M\coloneq\sup\big\{|J_kf(x)| : x\in K\oplus B_{\mathbb{R}^k}(0, \delta_0)\big\}.$$
\medskip
\newcommand\myeqhuh{\mathrel{\stackrel{\makebox[0pt]{\mbox{\normalfont\tiny (\ref{Eq1})}}}{\leq}}}
\newcommand\myeqhu{\mathrel{\stackrel{\makebox[0pt]{\mbox{\normalfont\tiny (\ref{Eq2})}}}{\leq}}}
\newcommand\myeqh{\mathrel{\stackrel{\makebox[0pt]{\mbox{\normalfont\tiny (\ref{Eq4})}}}{\leq}}}

        Altogether, we have \begin{equation*}
            \begin{split}\mathcal{M}^k_{r, C}(S)&\leq\sum_{Q\in\mathcal{C}}\mathcal{M}^k_{r, C}\big(f(Q)\big)\\&\myeqhuh\frac{(1+\eta)^{n-k}}{\omega_{n-k}}\sum_{Q\in\mathcal{C}}\int\limits_{A_Q(Q)}\mathcal{H}^{n-k}(C_{f(a)})\dH{k}(x)+\frac{L}{p\omega_{n-k}}(1+\eta)^n\lambda^k\Big(\bigcup\mathcal{C}\Big)\\&\myeqhu\frac{(1+\eta)^{n-k}}{\omega_{n-k}}\int\limits_{\bigcup \mathcal{C}}J_kf(x)\mathcal{H}^{n-k}(C_{f(x)})\dlambda{k}(x)\\&\quad\quad+\frac{\omega_{n-k}\diam(C)^{n-k}+M+1+L}{p\omega_{n-k}}(1+\eta)^n\lambda^k\Big(\bigcup\mathcal{C}\Big)\\&\leq\frac{(1+\eta)^{n-k}}{\omega_{n-k}}\int\limits_{K}J_kf(x)\mathcal{H}^{n-k}(C_{f(x)})\dlambda{k}(x)\\&\quad\quad+M\diam(C)^{n-k}(1+\eta)^{n-k}\lambda^k\bigg(\Big(\bigcup\mathcal{C}\Big)\setminus K\bigg)\\&\quad\quad+\frac{\omega_{n-k}\diam(C)^{n-k}+M+1+L}{p\omega_{n-k}}(1+\eta)^n\lambda^k\Big(\bigcup\mathcal{C}\Big)\\&\myeqh\frac{(1+\eta)^{n-k}}{\omega_{n-k}}\int\limits_{K}J_kf(x)\mathcal{H}^{n-k}(C_{f(x)})\dlambda{k}(x)\\&\quad\quad+M\diam(C)^{n-k}\frac{(1+\eta)^{n-k}}{p}\\&\quad\quad+\frac{\omega_{n-k}\diam(C)^{n-k}+M+1+L}{p\omega_{n-k}}(1+\eta)^n\lambda^k\big(K\oplus B_{\mathbb{R}^k}(0, \delta_0)\big),
            \end{split}
        \end{equation*}
        from which it follows that \begin{equation*}
            \begin{split}
\mathcal{M}^k_{C}(S)^*&\leq\frac{(1+\eta)^{n-k}}{\omega_{n-k}}\int\limits_{K}J_kf(x)\mathcal{H}^{n-k}(C_{f(x)})\dlambda{k}(x)\\&\quad\quad+M\diam(C)^{n-k}\frac{(1+\eta)^{n-k}}{p}\\&\quad\quad+\frac{\omega_{n-k}\diam(C)^{n-k}+M+1+L}{p\omega_{n-k}}(1+\eta)^{n}\lambda^k\big(K\oplus B_{\mathbb{R}^k}(0, \delta_0)\big).
            \end{split}
        \end{equation*}
        Using area formula and the fact that $p\in\mathbb{N}$ was arbitrary, we obtain
     $$\mathcal{M}^k_{C}(S)^*\leq\frac{1}{\omega_{n-k}}\int\limits_{S}\mathcal{H}^{n-k}(C_{w})\dH{k}(w),$$ which completes the proof.
     \end{proof}
\end{lemma}

 Lemmata \ref{krecti} and \ref{krect} give us the following result. Later, we will see that the assumption $C\in\mathcal{C}^{n, n}$ can be relaxed to $C\in\mathcal{C}^{n}$.
\begin{lemma}\label{krectif}
    Let $C\in\mathcal{C}^{n, n}$, and let $S\subseteq\mathbb{R}^n$ be a compact subset of a $\mathcal{C}^1$ $k$-graph, where $k\in\{1, \dots, n-1\}$. Then we have \begin{align*}
            \mathcal{M}^k_{C}(S)=\frac{1}{\omega_{n-k}}
            \int\limits_{S}\mathcal{H}^{n-k}(C_x)\dH{k}(x).
        \end{align*}
\end{lemma}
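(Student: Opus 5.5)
The statement follows by combining the two one-sided estimates already proved for compact subsets of $\mathcal{C}^1$ $k$-graphs. We fix $C\in\mathcal{C}^{n,n}$ and a compact set $S$ contained in a $\mathcal{C}^1$ $k$-graph, and put
\[
I\coloneq\frac{1}{\omega_{n-k}}\int\limits_S\mathcal{H}^{n-k}(C_x)\dH{k}(x).
\]
We first check that $I<\infty$. After a rotation, $S=f(K)$ with $K$ compact and $f(x)=(x,h(x))$ Lipschitz on a neighbourhood of $K$. Hence $\mathcal{H}^k(S)\leq\textup{Lip}(f)^k\lambda^k(K)<\infty$. Moreover $\mathcal{H}^{n-k}(C_x)\leq\omega_{n-k}\diam(C)^{n-k}$ for every $x$, so the integrand is bounded. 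The integral is also well defined, because the approximate tangent map $x\mapsto\textup{T}^k_xS$ is $\mathcal{H}^k$-measurable (Remark~\ref{consistency}) and coincides $\mathcal{H}^k$-a.e.\ with the continuous tangent map of the ambient graph (Lemma~\ref{gong}).

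Next, Lemma~\ref{krecti} applies because $C\in\mathcal{C}^{n,n}$, and it gives $\mathcal{M}^k_C(S)_*\geq I$. Lemma~\ref{krect} holds for every $C\in\mathcal{C}^n$, in particular for ours, and it gives $\mathcal{M}^k_C(S)^*\leq I$. Since always $\mathcal{M}^k_C(S)_*\leq\mathcal{M}^k_C(S)^*$, we get the chain
\[
I\leq\mathcal{M}^k_C(S)_*\leq\mathcal{M}^k_C(S)^*\leq I .
\]
So the lower and upper contents coincide, the limit in Definition~\ref{def1} exists, and $\mathcal{M}^k_C(S)=I$.

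There is no real obstacle here, since all the analytic work is in Lemmata~\ref{krecti} and~\ref{krect}. The only thing to verify is that both lemmas are stated for the same quantity $I$. This holds because each of their proofs converts the integral over $K$ into the integral over $S$ by the area formula, using that $\mathcal{H}^k$-a.e.\ the approximate tangent space of $S$ equals the tangent space of the ambient graph.
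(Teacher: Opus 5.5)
Your proposal is correct and is the same argument as the paper's. The paper obtains Lemma~\ref{krectif} by combining the lower bound of Lemma~\ref{krecti} (which needs $C\in\mathcal{C}^{n,n}$) with the upper bound of Lemma~\ref{krect} (which holds for all $C\in\mathcal{C}^n$), exactly as in your chain $I\leq\mathcal{M}^k_C(S)_*\leq\mathcal{M}^k_C(S)^*\leq I$.
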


\section{Anisotropic $k$-dimensional Minkowski Content for Countably $\mathcal{H}^k$-rectifiable Sets}\label{count}
\subsection{Lower Bound for General Countably $\mathcal{H}^k$-rectifiable Sets} Let us first establish a lower bound on $\mathcal{M}^k_C(S)_*$ for countably $\mathcal{H}^k$-rectifiable sets $S$. We will proceed similarly to the proof of \cite[Lemma 6]{Rataj5}.

\begin{lemma}\label{LP}
Let $T\in G_k(\mathbb R^n)$ and let $L\in G_m(\mathbb R^n)$ with
$k+m\geq n$. Let $C\subseteq L$ be a convex body. Then
\[
J^{n-k}(P_{T^\perp}|_L)\,
\mathcal H^{n-k}\big(P_{(T\cap L)^{\perp_L}}(C)\big)
=
\mathcal H^{n-k}\big(P_{T^\perp}(C)\big),
\]
where $(T\cap L)^{\perp_L}$ denotes the orthogonal complement of $T\cap L$
inside $L$.
\begin{proof}
Since
\[
\dim(T\cap L)\geq k+m-n,
\]
there are two cases.

First suppose that
\[
\dim(T\cap L)>k+m-n.
\]
Then
\[
\dim P_{T^\perp}(L)
=
m-\dim(T\cap L)
<
m-(k+m-n)
=
n-k.
\]
Since $C\subseteq L$, we have
\[
P_{T^\perp}(C)\subseteq P_{T^\perp}(L),
\]
and therefore
\[
\mathcal H^{n-k}\big(P_{T^\perp}(C)\big)=0.
\]
At the same time, the rank of the linear map
\[
P_{T^\perp}|_L\colon L\to T^\perp
\]
is strictly smaller than $n-k$, and hence
\[
J^{n-k}(P_{T^\perp}|_L)=0.
\]
Thus both sides vanish.

Now suppose that
\[
\dim(T\cap L)=k+m-n.
\]
Set
\[
W:=(T\cap L)^{\perp_L}.
\]
Then
\[
L=(T\cap L)\oplus W
\]
orthogonally inside $L$, and
\[
\dim W
=
m-\dim(T\cap L)
=
m-(k+m-n)
=
n-k.
\]

Consider the restriction
\[
A:=P_{T^\perp}|_W\colon W\to T^\perp.
\]
We claim that $A$ is injective. Indeed, if $w\in W$ and $A(w)=0$, then
$P_{T^\perp}(w)=0$, and hence $w\in T$. Since also $w\in W\subseteq L$, we
have
\[
w\in T\cap L.
\]
But $w\in W=(T\cap L)^{\perp_L}$, so $w=0$. Thus $A$ is injective. Since
\[
\dim W=\dim T^\perp=n-k,
\]
the map $A$ is a linear isomorphism from $W$ onto $T^\perp$.

For every $c\in C\subseteq L$, write uniquely
\[
c=t+w,
\qquad
t\in T\cap L,\quad w\in W.
\]
Then
\[
P_W(c)=w
\]
and, since $t\in T$,
\[
P_{T^\perp}(c)
=
P_{T^\perp}(t+w)
=
P_{T^\perp}(w)
=
A(w).
\]
Therefore
\[
P_{T^\perp}(C)
=
A\big(P_W(C)\big)
=
A\big(P_{(T\cap L)^{\perp_L}}(C)\big).
\]
By the area formula for the linear isomorphism $A$,
\[
\mathcal H^{n-k}\big(P_{T^\perp}(C)\big)
=
J^{n-k}(A)\,
\mathcal H^{n-k}\big(P_{(T\cap L)^{\perp_L}}(C)\big).
\]

It remains to identify $J^{n-k}(A)$ with
$J^{n-k}(P_{T^\perp}|_L)$. With respect to the orthogonal decomposition
\[
L=(T\cap L)\oplus W,
\]
the map $P_{T^\perp}|_L$ vanishes on $T\cap L$ and coincides with $A$ on
$W$. Hence the nonzero singular values of $P_{T^\perp}|_L$ are precisely
the singular values of $A$. Therefore
\[
J^{n-k}(P_{T^\perp}|_L)=J^{n-k}(A).
\]
Combining this with the previous identity gives
\[
J^{n-k}(P_{T^\perp}|_L)\,
\mathcal H^{n-k}\big(P_{(T\cap L)^{\perp_L}}(C)\big)
=
\mathcal H^{n-k}\big(P_{T^\perp}(C)\big),
\]
as required.
\end{proof}
\end{lemma}

\begin{proposition}[Lower bound]\label{LB}
Let $S\subseteq\mathbb{R}^n$ be a countably $\mathcal{H}^k$-rectifiable set,
where $k\in\{1,\dots,n-1\}$, and let $C\in\mathcal{C}^n$. Then
\[
\mathcal{M}^k_{C}(S)_*
\geq
\frac{1}{\omega_{n-k}}
\int\limits_{S}\mathcal{H}^{n-k}(C_x)\,\dH{k}(x).
\]
\begin{proof}
Since $S$ is countably
$\mathcal H^k$-rectifiable, there exists a sequence
$\{S_i\}_{i=1}^{\infty}$ of pairwise disjoint compact subsets of $S$
covering $S$ up to an $\mathcal H^k$-negligible set, such that each $S_i$
is contained in a $\mathcal C^1$ $k$-graph. Set $$S^{\prime}\coloneqq\bigcup_{i=1}^{\infty}S_i.$$ Then $S^{\prime}$ is Borel and $\mathcal{H}^k(S\setminus S^{\prime})=0$.

First, assume that $C\in\mathcal{C}^{n, n}$. 

Let $N\in\mathbb N$ be arbitrary. Since $S_1,\dots,S_N$ are pairwise
disjoint compact sets, their mutual distances are positive. Hence, for all
sufficiently small $r>0$, the sets
\[
S_i\oplus rC,\qquad i\in\{1, \dots, N\},
\]
are pairwise disjoint. Therefore, by Lemma~\ref{krectif},
\[
\begin{split}
\sum_{i=1}^{N}
\frac{1}{\omega_{n-k}}
\int\limits_{S_i}\mathcal H^{n-k}(C_x)\,\dH{k}(x)
&=
\sum_{i=1}^{N}\mathcal M_C^k(S_i)
\\
&=
\lim_{r\to0_+}
\frac{1}{\omega_{n-k}r^{n-k}}
\lambda^n\Big(\bigcup_{i=1}^{N}(S_i\oplus rC)\Big)
\\
&\leq
\liminf_{r\to0_+}
\frac{\lambda^n(S^{\prime}\oplus rC)}{\omega_{n-k}r^{n-k}}
\\
&=
\mathcal M_C^k(S^{\prime})_*\\
&\leq
\mathcal M_C^k(S)_*.
\end{split}
\]
Letting $N\to\infty$ and taking Remark \ref{consistency} into account gives
\[
\frac{1}{\omega_{n-k}}
\int\limits_{S^{\prime}}\mathcal H^{n-k}(C_x)\,\dH{k}(x)=\frac{1}{\omega_{n-k}}
\int\limits_{S}\mathcal H^{n-k}(C_x)\,\dH{k}(x)
\leq
\mathcal M_C^k(S)_*.
\]
This proves the desired lower bound when $C$ is full-dimensional.

Now assume that $\dim(C)=m<n$ and denote by
\[
L:=\operatorname{span}(C)
\]
the linear span of $C$. Since $S^{\prime}$ is Borel, the sets $$S^{\prime}\oplus rC, \qquad r>0,$$ are measurable (see Remark \ref{measurability}).

If $k+m<n$, then for $\mathcal H^k$-a.e. $x\in S^{\prime}$ we have
\[
\mathcal H^{n-k}(C_x)
=
\mathcal H^{n-k}\big(P_{\textup N_x^{n-k}S^{\prime}}(C)\big)=0.
\]
Indeed, $P_{\textup N_x^{n-k}S^{\prime}}(C)$ has dimension at most $m<n-k$.
Consequently, taking Remark \ref{consistency} into account
\[
\frac{1}{\omega_{n-k}}
\int\limits_{S^{\prime}}\mathcal H^{n-k}(C_x)\,\dH{k}(x)=\frac{1}{\omega_{n-k}}
\int\limits_{S}\mathcal H^{n-k}(C_x)\,\dH{k}(x)=0
\leq
\mathcal M_C^k(S^{\prime})_*\leq\mathcal M_C^k(S)_*.
\]

It remains to consider the case $k+m\geq n$. Since $C\subseteq L$, Fubini's
theorem gives
\[
\lambda^n(S^{\prime}\oplus rC)
=
\int\limits_{L^\perp}
\mathcal H^m\Big(\big(S^{\prime}\cap(x+L)\big)\oplus rC\Big)\,\dH{n-m}(x).
\]
By Theorem~\ref{rezy}, the set $S^{\prime}\cap(x+L)$ is countably
$\mathcal H^{k+m-n}$-rectifiable for $\mathcal H^{n-m}$-a.e.
$x\in L^\perp$. Fatou's lemma, the lower bound already proved in the
full-dimensional case applied inside the space $L$ (if \(k+m-n=0\), we instead use formula \eqref{0n}), and the coarea formula
for the projection $P_{L^\perp}$ yield
\[
\begin{split}
\mathcal M_C^k(S^{\prime})_*
&\geq
\int\limits_{L^\perp}
\liminf_{r\to0_+}
\frac{
\mathcal H^m\Big(\big(S^{\prime}\cap(x+L)\big)\oplus rC\Big)}
{\omega_{n-k}r^{m-(k+m-n)}}
\,\dH{n-m}(x)
\\
&\geq
\frac{1}{\omega_{n-k}}
\int\limits_{L^\perp}
\int\limits_{S^{\prime}\cap(x+L)}
\mathcal H^{n-k}
\big(P_{(\textup T_y^kS^{\prime}\cap L)^{\perp_L}}(C)\big)
\,\dH{k+m-n}(y)\,\dH{n-m}(x)
\\
&=
\frac{1}{\omega_{n-k}}
\int\limits_{S^{\prime}}
J^{n-m}\big(P_{L^\perp}|_{\textup T_w^kS^{\prime}}\big)
\mathcal H^{n-k}
\big(P_{(\textup T_w^kS^{\prime}\cap L)^{\perp_L}}(C)\big)
\,\dH{k}(w)
\\
&=
\frac{1}{\omega_{n-k}}
\int\limits_{S^{\prime}}
J^{n-k}\big(P_{\textup N_w^{n-k}S^{\prime}}|_L\big)
\mathcal H^{n-k}
\big(P_{(\textup T_w^kS^{\prime}\cap L)^{\perp_L}}(C)\big)
\,\dH{k}(w).
\end{split}
\]
Here the normal space to the slice is understood inside the affine space
$x+L$, and hence equals
\[
(\textup T_w^kS^{\prime}\cap L)^{\perp_L}.
\]
In the non-transversal case the Jacobian
\[
J^{n-k}\big(P_{\textup N_w^{n-k}S^{\prime}}|_L\big)
\]
vanishes, while in the transversal case the tangent space to the slice is
\[
\textup T_w^{k+m-n}\big(S^{\prime}\cap(P_{L^\perp}(w)+L)\big)
=
\textup T_w^kS^{\prime}\cap L.
\]

Applying Lemma~\ref{LP} with
\[
T=\textup T_w^kS^{\prime}
\]
gives, for $\mathcal H^k$-a.e. $w\in S^{\prime}$,
\[
J^{n-k}\big(P_{\textup N_w^{n-k}S^{\prime}}|_L\big)
\mathcal H^{n-k}
\big(P_{(\textup T_w^kS^{\prime}\cap L)^{\perp_L}}(C)\big)
=
\mathcal H^{n-k}\big(P_{\textup N_w^{n-k}S^{\prime}}(C)\big).
\]
Since
\[
P_{\textup N_w^{n-k}S^{\prime}}(C)=C_w,
\]
we conclude, taking Remark \ref{consistency} into account once again, that
\[
\mathcal M_C^k(S)_*\geq \mathcal M_C^k(S^{\prime})_*
\geq
\frac{1}{\omega_{n-k}}
\int\limits_{S^{\prime}}\mathcal H^{n-k}(C_w)\,\dH{k}(w)=\frac{1}{\omega_{n-k}}
\int\limits_{S}\mathcal H^{n-k}(C_w)\,\dH{k}(w),
\]
which completes the proof.
\end{proof}
\end{proposition}

In general, to obtain a reasonable upper bound on $\mathcal{M}^k_C(S)^*$, one needs to impose additional assumptions, as we will see in the next subsection.

\subsection{The Family of AFP-conditions}
\label{SecDef}
In this section, we introduce a family of conditions commonly referred to as \emph{AFP-conditions}. The name reflects the initials of Ambrosio, Fusco and Pallara, in analogy with their foundational work \cite[p.~110]{ambrosio}, where these conditions are discussed.
\begin{definition}[AFP-$k$-condition]
    Let $S$ be a closed subset of $\mathbb{R}^n$, and let $k\in\{1, \dots, n-1\}.$ We say that $S$ satisfies the \emph{AFP-$k$-condition} provided that there exist a constant $\gamma>0$ and Radon measure $\mu$, absolutely continuous with respect to $\mathcal{H}^k$, such that \begin{equation}\label{pp21}
        \mu\big(B(x, r)\big)\geq\gamma r^{k},\quad\text{$x\in S$, $r\in(0, 1)$.}
    \end{equation}
\end{definition}
The following remark gives an example of a class of sets that satisfy the AFP-$k$-condition.
\begin{rem}[\protect{\cite[Remark 1]{Villa4}}]\label{rema}
    If $S\subseteq\mathbb{R}^n$ is a $k$-rectifiable compact set, where $k\in\{1, \dots, n-1\}$, then $S$ satisfies the AFP-$k$-condition.
\end{rem}

The validity of the AFP-
$k$-condition guarantees that, for a certain class of sets, the isotropic 
$k$-dimensional Minkowski content coincides with the 
$k$-dimensional Hausdorff measure.

\begin{theorem}[\protect{\cite[Theorem 2.104]{ambrosio}}]\label{pp14}
    If $S$ is a countably $\mathcal{H}^k$-rectifiable compact set and the AFP-$k$-condition holds for $S$, then $$\mathcal{M}^k_{B(0, 1)}(S)=\mathcal{H}^k(S).$$
\end{theorem}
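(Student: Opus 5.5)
The lower bound is a special case of Proposition~\ref{LB}, so the real work is the upper bound, which I plan to obtain by an Ambrosio--Fusco--Pallara type covering argument built on Lemma~\ref{bes}. For $C=B(0,1)$ and every $L\in G_k(\mathbb R^n)$ we have $\mathcal H^{n-k}\big(P_{L^\perp}(B(0,1))\big)=\omega_{n-k}$. Hence Proposition~\ref{LB} immediately gives
\[
\mathcal M^k_{B(0,1)}(S)_*\geq \mathcal H^k(S),
\]
and it remains to prove $\mathcal M^k_{B(0,1)}(S)^*\leq\mathcal H^k(S)$.

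\textbf{Step 1: a priori finiteness.} Since $S$ is compact and $\mu$ is Radon, $\mu(S_2)<\infty$. Fix $r\in(0,1)$ and apply Lemma~\ref{bes} to $A=S$ with $\rho=r$, giving centres $x_i\in S$. The balls $B(x_i,r)$ overlap at most $3^n$ times and lie in $S_1$, so their number $N_r$ satisfies $N_r\gamma r^k\leq 3^n\mu(S_1)$. Consequently
\[
\lambda^n(S_r)\leq N_r\,\omega_n(2r)^n\leq c\,r^{n-k},
\]
so $\mathcal M^k_{B(0,1)}(S)^*<\infty$. A standard covering argument also gives $\mathcal H^k(S)<\infty$ (alternatively, this follows from the lower bound just proved). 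Note that the constant here involves only $\gamma$, $n$, $k$ and the $\mu$-mass of a neighbourhood, which is what makes the next step work.

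\textbf{Step 2: splitting off a good compact part.} Fix $\delta\in(0,1)$ and $\varepsilon>0$. By Remark~\ref{remark}, $S$ is covered up to an $\mathcal H^k$-null set by countably many pairwise disjoint compact subsets $S_i$, each lying in a $\mathcal C^1$ $k$-graph. Since $\mu\ll\mathcal H^k$ and $\mathcal H^k(S)<\infty$, I choose $N$ so large that $S'\coloneqq S_1\cup\dots\cup S_N$ satisfies $\mu(S\setminus S')<\varepsilon$. The pieces are disjoint compacts, so for small $r$ their $r$-neighbourhoods are disjoint. Lemma~\ref{krectif} with $C=B(0,1)$ then gives
\[
\lambda^n(S'_{(1+\delta)r})\big/\big(\omega_{n-k}r^{n-k}\big)\longrightarrow(1+\delta)^{n-k}\mathcal H^k(S')\leq(1+\delta)^{n-k}\mathcal H^k(S).
\]
Now split $S=\{x\in S:\dist(x,S')\leq\delta r\}\cup R_r$. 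The first part has $r$-neighbourhood contained in $S'_{(1+\delta)r}$.

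\textbf{Step 3: controlling the remainder $R_r$ (main obstacle).} Apply Lemma~\ref{bes} to $R_r$ with $\rho=\delta r/2$, giving centres $x_i\in R_r$. The balls $B(x_i,\delta r/2)$ avoid $S'$, lie in $S_r\setminus S'$, and overlap at most $3^n$ times. By \eqref{pp21} their number is at most $3^n\mu(S_r\setminus S')/\big(\gamma(\delta r/2)^k\big)$. The $r$-neighbourhood of $R_r$ is covered by the balls $B(x_i,(1+\delta)r)$, hence
\[
\lambda^n\big((R_r)_r\big)\leq c(n,k,\gamma)\,\delta^{-k}\,\mu(S_r\setminus S')\,r^{n-k}.
\]
Since $S$ is compact and $\mu$ is finite near $S$, we have $\mu(S_r\setminus S')\to\mu(S\setminus S')<\varepsilon$ as $r\to0_+$. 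The delicate point is exactly this: the AFP mass must be charged to $S_r\setminus S'$ rather than to $S\setminus S'$. That is why the balls are built at scale $\delta r$ away from $S'$, and why $\varepsilon$ must be chosen after $\delta$.

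\textbf{Conclusion.} Combining Steps 2 and 3,
\[
\mathcal M^k_{B(0,1)}(S)^*\leq(1+\delta)^{n-k}\mathcal H^k(S)+c\,\delta^{-k}\varepsilon.
\]
Letting $\varepsilon\to0_+$ first and then $\delta\to0_+$ gives the upper bound, and together with the lower bound this yields $\mathcal M^k_{B(0,1)}(S)=\mathcal H^k(S)$.
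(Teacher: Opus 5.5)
Your argument is correct; the paper itself gives no proof of this statement and simply quotes it from Ambrosio--Fusco--Pallara. Your scheme is nevertheless the paper's own proof of the theorem for the case $k+m-n>0$ in the special case $L=\mathbb{R}^n$, $C=B(0,1)$ (so $C^{\varepsilon}=C$ and $\mathcal{H}^0\big(P_{\{0\}}(S\cap B(x,r))\big)=1$, after restricting $\mu$ to a neighbourhood of $S$ to make it finite). In both, the finitely many $\mathcal{C}^1$ pieces are handled by Lemma~\ref{krectif}, the remainder at distance of order $r$ from them is covered via Lemma~\ref{bes}, and the number of balls is controlled by charging $\mu$ to a shrinking neighbourhood of $S$ minus the good pieces; the only cosmetic difference is that you keep $\delta$ and $\varepsilon$ independent and take iterated limits, whereas the paper couples them through $\kappa=\varepsilon^{1/(k+m-n+1)}$.
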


In the anisotropic setting, only the case $k=n-1$ is well understood.
\begin{theorem}[\protect{\cite[Theorem 3.4]{Villa2}}]\label{pp13}
    If $S$ is a countably $\mathcal{H}^{n-1}$-rectifiable compact set and the AFP-$(n-1)$-condition holds for $S$, then \begin{equation}\label{Eq7}
        \mathcal{M}^{n-1}_{C}(S)=\frac{1}{2}\int\limits_{S}\big(h_C(\nu_S)+h_C(-\nu_S)\big)\dH{n-1}
    \end{equation} for any $C\in\mathcal{C}^{n, n}.$
\end{theorem}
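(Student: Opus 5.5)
By the computation below, for \(k=n-1\) the right-hand side of \eqref{Eq7} equals the functional \eqref{limit}. So I would deduce Theorem~\ref{pp13} by matching the lower bound of Proposition~\ref{LB} with an upper bound obtained from the AFP-$(n-1)$-condition.

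\emph{Step 1: identify the integrand.} For \(\mathcal H^{n-1}\)-a.e.\ \(x\in S\) we have \(\textup{N}^1_xS=\LO(\nu_S(x))\). Hence \(C_x=P_{\LO(\nu_S(x))}(C)\) is the segment \(\{t\nu_S(x): -h_C(-\nu_S(x))\le t\le h_C(\nu_S(x))\}\), and
\[
\mathcal H^1(C_x)=h_C(\nu_S(x))+h_C(-\nu_S(x)).
\]
Since \(\omega_1=2\), the right-hand side of \eqref{Eq7} equals \(\Phi_S(C)\). By Proposition~\ref{LB}, \(\mathcal M^{n-1}_C(S)_*\ge\Phi_S(C)\). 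The AFP condition on the compact set \(S\) gives \(\mathcal H^{n-1}(S)<\infty\), because a finite Radon measure has \(\mu(S\oplus B(0,1))<\infty\) and the usual density estimate applies. It therefore only remains to prove \(\mathcal M^{n-1}_C(S)^*\le\Phi_S(C)\).

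\emph{Step 2: split \(S\).} Fix \(\varepsilon>0\). Choose \(\alpha,\beta>0\) with \(B(0,\alpha)\subseteq C\subseteq B(0,\beta)\). By Remark~\ref{remark}, choose finitely many pairwise disjoint compact pieces \(S_1,\dots,S_N\) of \(\mathcal C^1\) \((n-1)\)-graphs with \(K\coloneqq\bigcup_{i\le N}S_i\) and \(\mathcal H^{n-1}(S\setminus K)\) as small as desired. Since \(\mu\ll\mathcal H^{n-1}\), this also makes \(\mu(S\setminus K)<\varepsilon\). Then
\[
\lambda^n(S\oplus rC)\le\sum_{i\le N}\lambda^n(S_i\oplus rC)+\lambda^n\big((S\setminus K)\oplus rC\big).
\]
By Lemma~\ref{krectif}, after dividing by \(2r\) the first sum tends to \(\Phi_K(C)\le\Phi_S(C)\).

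\emph{Step 3: the remainder.} I would cover \(R\coloneqq S\setminus K\) using Lemma~\ref{bes} with \(\rho=r\), obtaining centres \(x_j\in R\) whose balls overlap at most \(3^n\) times. Then \(R\oplus rC\subseteq\bigcup_j B(x_j,(1+\beta)r)\). By \eqref{pp21}, each of these balls has volume at most \(\omega_n(1+\beta)^n r^n\le\omega_n(1+\beta)^n\gamma^{-1}r\,\mu(B(x_j,r))\). Summing and using the bounded overlap gives
\[
\lambda^n(R\oplus rC)\le c\,r\,\mu\big(R\oplus B(0,r)\big),\qquad c=c(n,\beta,\gamma).
\]

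\emph{Main obstacle.} As \(r\to0\), \(\mu(R\oplus B(0,r))\to\mu(\overline R)\), and \(\overline R\) may be much larger than \(R\), since \(R\) can be dense in \(S\). I would get around this by splitting \(R\) according to the distance to \(K\). Points of \(R\) at distance greater than \(\tau r\) from \(K\) (with \(\tau\) large) are covered as above by balls \(B(x_j,r)\) that avoid \(K\). Their total contribution is then \(\lesssim r\,\mu(S\setminus K)<r\varepsilon\), which is fine. The remaining points of \(R\) lie within \(\tau r\) of \(K\), so their \(rC\)-neighbourhood lies in \(K\oplus(\tau+\beta)rB(0,1)\). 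Their contribution can therefore be compared with the isotropic tube of \(K\). To control the excess, I would use Theorem~\ref{pp14} together with Lemma~\ref{krectif} for \(B(0,1)\). Applied to both \(S\) and \(K\), they show that the isotropic tube \(\lambda^n(S\oplus sB(0,1))\) exceeds the sum of the separated tubes \(\lambda^n(S_i\oplus sB(0,1))\) only by \(2s\,\mathcal H^{n-1}(S\setminus K)+o(s)\). Translating this isotropic excess into the anisotropic \(rC\)-excess, with the scale change \(\alpha\) versus \(\tau+\beta\), is where I expect the real difficulty. I would first try to localize near each \(S_i\) by local graph coordinates, and otherwise follow Lussardi and Villa's argument, which is built on exactly this comparison. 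Letting \(r\to0\), then \(\tau\to\infty\) suitably, and finally \(\varepsilon\to0\) gives the upper bound.
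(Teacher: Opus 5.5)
Your Step 1 is exactly Lemma~\ref{h_C}, the lower bound is Proposition~\ref{LB}, and your estimate for the points of $R=S\setminus K$ at distance more than $\tau r$ from $K$ is fine. The gap is the step you flag as the main obstacle, and the direction you propose cannot close it. Bounding the $rC$-neighbourhood of the points of $R$ within $\tau r$ of $K$ by $K\oplus(\tau+\beta)rB(0,1)$ gives nothing useful. That tube has volume about $2(\tau+\beta)r\,\mathcal{H}^{n-1}(K)$, which exceeds the main term $2r\Phi_S(C)$ by a quantity of order $r$, not $o(r)$, and letting $\tau\to\infty$ makes it worse. No finer comparison with $K\oplus rC$ rescues this at scale $r$. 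Since $C\subseteq B(0,\beta)$, every $x\in R$ with $\dist(x,K)>2\beta r$ has $x+rC$ disjoint from $K\oplus rC$, so such points cannot be absorbed into the tube of $K$; they must be counted. For the same reason, the isotropic excess $2s\,\mathcal{H}^{n-1}(S\setminus K)+o(s)$ from Theorem~\ref{pp14} is correct but gives nothing when compared across the scales $\alpha r$ and $(\tau+\beta)r$. The real difficulty is the points at distance between $o(r)$ and $O(r)$ from $K$, and your plan does not handle them.

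The missing idea is to split near and far points at a scale $o(r)$, cover the far points by balls of that small size, and choose $K$ only \emph{after} fixing the small parameter. Fix $\delta\in(0,1)$ and choose $K$ with $\mu(S\setminus K)\le\delta^n$.
\begin{itemize}
\item Near points: if $\dist(x,K)\le\delta r$, then $x+rC\subseteq K\oplus(1+\delta/\alpha)rC$. Lemma~\ref{krectif}, applied to the finitely many $S_i$, bounds this part by $(1+\delta/\alpha)\Phi_S(C)$ in the limit.
\item Far points: the points with $\dist(x,K)>\delta r$ are covered, via Lemma~\ref{bes}, by balls of radius $\delta r/2$, and these balls avoid $K$. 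The AFP-$(n-1)$-condition and the overlap bound allow at most $2^{n-1}3^n\gamma^{-1}(\delta r)^{1-n}\mu\big((S\oplus B(0,\delta r))\setminus K\big)$ such balls. Hence the $rC$-neighbourhoods of the far points have volume at most a constant times $\delta^{1-n}\mu\big((S\oplus B(0,\delta r))\setminus K\big)\,r$.
\item Limits: since $S$ is closed, the $\mu$-factor tends to $\mu(S\setminus K)\le\delta^n$, so the far part contributes $O(\delta)$. Let $r\to0$, then $\delta\to0$.
\end{itemize}

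This is the mechanism behind the paper's route. The paper cites the statement from Lussardi and Villa and recovers it as the case $k=n-1$ of Theorem~\ref{sq}, via Remark~\ref{remark}, Lemma~\ref{krectif} and Theorem~\ref{s}. The same small-scale covering is written out in its proof of the case $k+m-n>0$, with threshold $\kappa r$ and balls of radius $\varepsilon\alpha\kappa r$. Your idea of feeding in Theorem~\ref{pp14} also works, but only at scale $\varepsilon r$. That is Theorem~\ref{theoremExistence} with $B(0,1)$ as the reference body, as in the paper's closing remark.
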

Let us mention that the right-hand side of \eqref{Eq7} is in the case $k=n-1$ exactly \eqref{limit}:
\begin{lemma}\label{h_C}
    Let $C\in\mathcal{C}^{n, n}$, and let $L$ be an $(n-1)$-dimensional linear subspace of $\mathbb{R}^n$. Then $$\rho_{P_{L^{\perp}}(C)}(\nu)=h_C(\nu)\qquad\text{for every $\nu\in L^{\perp}\cap\mathbb{S}^{n-1}.$}$$
    \begin{proof}
        Fix a $\nu\in L^{\perp}\cap\mathbb{S}^{n-1}$. Then $$C\subseteq\big\{y\in\mathbb{R}^n: y\cdot\nu\leq h_C(\nu)\big\}.$$ Thus, $$P_{L^{\perp}}(C)\subseteq\big\{y\in L^{\perp}: y\cdot\nu\leq h_C(\nu)\big\}.$$ It holds that $\rho_{P_{L^{\perp}}(C)}(\nu)\nu\in P_{L^{\perp}}(C)$, whence $$h_C(\nu)\geq\big(\rho_{P_{L^{\perp}}(C)}(\nu)\nu\big)\cdot\nu=\rho_{P_{L^{\perp}}(C)}(\nu).$$

On the other hand, fix an $x\in\big\{y\in\mathbb{R}^n: y\cdot\nu= h_C(\nu)\big\}\cap C.$ Then $$P_{L^{\perp}}(x)\in P_{L^{\perp}}\big(\big\{y\in\mathbb{R}^n: y\cdot\nu= h_C(\nu)\big\}\big)\cap P_{L^{\perp}}(C),$$ from which it follows $$h_C(\nu)\nu=P_{L^{\perp}}(x)\in \big\{y\in L^{\perp}: y\cdot\nu= h_C(\nu)\big\}\cap P_{L^{\perp}}(C).$$
        Hence 
        $$h_C(\nu)\leq\max\big\{\lambda\geq 0:\lambda\nu\in P_{L^{\perp}}(C)\big\}=\rho_{P_{L^{\perp}}(C)}(\nu),$$ from which the conclusion follows.
    \end{proof}
\end{lemma}
For $k=n-1$, we have $\textup{n}^{n-k}_{x}S=\{\nu_S(x), -\nu_{S}(x)\}$ for $\mathcal{H}^{n-1}$-a.a. $x\in S$.  In view of Lemma \ref{h_C}, we conclude that for any $C\in\mathcal{C}^{n, n}$\begin{equation*}
    \begin{split} \int\limits_{S}\fint\limits_{\textup{n}^{n-k}_{x}S}\rho_{C_x}(\nu)\dH{0}(\nu)\dH{n-1}(x)&=\int\limits_{S}\fint\limits_{\textup{n}^{n-k}_{x}S}h_C(\nu)\dH{0}(\nu)\dH{n-1}(x)\\&=\frac{1}{2}\int\limits_{S}\big(h_C(\nu_{S})+h_C(-\nu_{S})\big)\dH{n-1}.
    \end{split}
\end{equation*}

\begin{rem}\label{pp15}
Note that if the set $S\subseteq\mathbb{R}^n$ satisfies the AFP-$k$-condition for some $k\in\{1, \dots, n-1\}$ and some $\mu$ with $\mu(\mathbb{R}^n)<\infty$, then $S$ is totally bounded and $\mathcal{H}^k(S)<\infty$. Indeed, let $\rho\in (0, 1)$ and let $S(\rho)\subseteq S$ be a maximal strictly $\rho$-separated set (that is, for every $x, y\in S(\rho)$ such that $x\neq y$, it holds that $|x-y|> \rho$), then \begin{align*}
    S\subseteq\bigcup_{x\in S(\rho)}B(x, \rho)
\end{align*} and the balls $\{B(x, \rho/2)\}_{x\in S(\rho)}$ are pairwise disjoint. Consequently, $$\infty>\mu(\mathbb{R}^n)\geq\mu\Big(\bigcup_{x\in S(\rho)} B(x, \rho/2)\Big)=\sum_{x\in S(\rho)}\mu\big(B(x, \rho/2)\big)\geq\gamma\mathcal{H}^{0}\big(S(\rho)\big)(\rho/2)^k,$$ which in turn implies that $\mathcal{H}^{0}\big(S(\rho)\big)<\infty$, confirming the total boundedness of $S$. 

Furthermore, $S$ is covered by the balls $\{B(x, \rho)\}_{x\in S(\rho)}$ and the sum of their diameters satisfies $$\sum_{x\in S(\rho)}(2\rho)^k=4^k\mathcal{H}^{0}\big(S(\rho)\big)(\rho/2)^k\leq\frac{4^k}{\gamma}\mu(\mathbb{R}^n)<\infty.$$ Since $\rho>0$ is arbitrary, we get $\mathcal{H}^k(S)<\infty.$

On the other hand, if $S$ is compact and satisfies the AFP-$k$-condition for some $k\in\{1, \dots, n-1\}$, then the measure $\mu$ can be chosen finite. Indeed, we can replace $\mu$ by its restriction to $S\oplus B(0, 1)$.

Thus, the compactness of $S$ in Theorems \ref{pp14} and \ref{pp13} can be replaced by the closedness of $S$ and finiteness of $\mu.$ 

The closedness of $S$ cannot be omitted, as the following example illustrates.
\end{rem}

\begin{example}
    Let $S_1\coloneqq [0, 1]\times\{0\}\subseteq\mathbb{R}^2$ and $S_2\coloneqq S_1\cap (\mathbb{Q}\times \{0\})\subseteq\mathbb{R}^2$. Then both $S_1$ and $S_2$ are countably $\mathcal{H}^1$-rectifiable and satisfy the inequality \eqref{pp21} from the AFP-$1$-condition with $\mu\coloneqq\mathcal{H}^1\big|_{S_1}$. Furthermore, $\overline{S_2}=S_1$. Theorem \ref{pp14} and Remark \ref{pp11} imply $$\mathcal{M}^1_{B(0,1)}(S_2)=\mathcal{M}^1_{B(0,1)}(S_1)=\mathcal{H}^1(S_1)=1.$$ However, $\mathcal{H}^1(S_2)=0.$ 
\end{example}
\medskip

In a recent paper, Kiderlen and Rataj \cite{Rataj5} introduced a new variant of the AFP-condition with respect to a given linear subspace $L$.
\begin{definition}[AFP-condition relative to $L$]
    Let $S$ be a closed subset of $\mathbb{R}^n$, and let $L$ be an $m$-dimensional subspace of $\mathbb{R}^n$. We say that $S$ satisfies the \emph{AFP-condition relative to $L$} provided that there exist a constant $\gamma>0$ and Radon measure $\mu$, absolutely continuous with respect to $\mathcal{H}^{n-1}$, such that $$\mu\big(B(x, r)\big)\geq\gamma r^{m-1}\mathcal{H}^{n-m}\Big(P_{L^{\perp}}\big(S\cap B(x, r)\big)\Big),\qquad x\in S, r\in(0, 1).$$
\end{definition}
Note that in the above definition, the projection $P_{L^{\perp}}\big(S\cap B(x, r)\big)$ is compact, and hence $\mathcal{H}^{n-m}$-measurable.

\medskip

    The following theorem extends the result of Lussardi and Villa \cite[Theorem 3.4]{Villa2}. Namely, Kiderlen and Rataj confirmed that for countably $\mathcal{H}^{n-1}$-rectifiable compact sets, the validity of the AFP-condition relative to $L$ ensures the existence of the $C$-anisotropic $(n-1)$-dimensional Minkowski content for any $C\in\mathcal{C}^n$ with $C\subseteq L$.

\begin{theorem}[\protect{\cite[Theorem 8]{Rataj5}}]
    If $S$ is a countably $\mathcal{H}^{n-1}$-rectifiable compact set with finite $\mathcal{H}^{n-1}$-measure and $L$ is an $m$-dimensional subspace of $\mathbb{R}^n$ such that the AFP-condition relative to $L$ holds for $S$, then $$\mathcal{M}^{n-1}_{C}(S)=\frac{1}{2}\int\limits_{S}\big(h_C(\nu_S)+h_C(-\nu_S)\big)\dH{n-1}$$ for any $C\in\mathcal{C}^n$ with $C\subseteq L$.
\end{theorem}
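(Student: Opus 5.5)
The plan is to prove the two inequalities for $\mathcal M^{n-1}_C(S)_*$ and $\mathcal M^{n-1}_C(S)^*$ separately, and to identify the right-hand side with the functional \eqref{limit} for $k=n-1$. For the identification, Lemma~\ref{h_C} gives the equality when $\dim(C)=n$. For a general $C\subseteq L$, I will use that $\rho_{P_{L'^\perp}(C)}(\nu)=h_C(\nu)$ for $\nu\in L'^\perp$ ($\dim L'=n-1$). This follows from the same two-line argument as in Lemma~\ref{h_C}, which never used full dimensionality. Hence $\frac1{\omega_1}\int_S\mathcal H^1(C_x)\dH{n-1}(x)=\frac12\int_S(h_C(\nu_S)+h_C(-\nu_S))\dH{n-1}$. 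The lower bound is then exactly Proposition~\ref{LB} with $k=n-1$, so only the upper bound remains.

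For the upper bound, fix $\varepsilon>0$. By Remark~\ref{remark}, choose finitely many pairwise disjoint compact sets $S_1,\dots,S_N\subseteq S$, each contained in a $\mathcal C^1$ $(n-1)$-graph, with $\mathcal H^{n-1}(R)$ small, where $R\coloneqq S\setminus\bigcup_{i\le N}S_i$. Then
\[
S\oplus rC\subseteq\Big(\bigcup_{i\le N}S_i\oplus rC\Big)\cup(R\oplus rC).
\]
By Lemma~\ref{krect} (valid for all $C\in\mathcal C^n$), the first part contributes at most $\frac1{\omega_1}\int_{\bigcup S_i}\mathcal H^1(C_x)\dH{n-1}\le$ the target integral. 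For the remainder I will apply Lemma~\ref{bes} to cover $R$ by balls $B(x,r)$ with $x\in R$ and overlap at most $3^n$. Since $C\subseteq L$ and $C\subseteq B(0,\beta)$, Fubini over $L^\perp$ gives
\[
\lambda^n\big((S\cap B(x,r))\oplus rC\big)\le \omega_m\big((1+\beta)r\big)^m\,\mathcal H^{n-m}\Big(P_{L^\perp}\big(S\cap B(x,r)\big)\Big).
\]
Indeed, each slice by $z+L$ is contained in an $m$-ball of radius $(1+\beta)r$, and only $z\in P_{L^\perp}(S\cap B(x,r))$ contribute. The AFP-condition relative to $L$ bounds this by $c\,r\,\mu(B(x,r))$. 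Summing with the bounded overlap yields
\[
\lambda^n(R\oplus rC)\le c\,3^n\, r\,\mu\big(R\oplus B(0,r)\big),
\]
and therefore $\limsup_{r\to0_+}\lambda^n(R\oplus rC)/(2r)\le c'\,\mu(\overline R)$.

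The main obstacle is this last quantity. We have $\mu\ll\mathcal H^{n-1}$, but $\overline R$ may contain parts of $\bigcup S_i$ of positive measure, so smallness of $\mathcal H^{n-1}(R)$ does not directly give smallness of $\mu(\overline R)$. I would first try the Ambrosio–Fusco–Pallara device. Rather than covering all of $R$, cover only $R_r\coloneqq\{x\in S: x\notin\bigcup_i S_i\oplus B(0,\sigma r)\}$. Points of $S\setminus R_r$ are absorbed into $\bigcup S_i\oplus(rC\oplus B(0,\sigma r))$, whose volume is still controlled by Lemma~\ref{krect} applied to a slightly fattened $C$ together with Lemma~\ref{continuity}. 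The balls $B(x,\sigma r/2)$, $x\in R_r$, then avoid $\bigcup S_i$. This leads to the bound
\[
\limsup_{r\to0_+}\mu\Big(\bigcup_{x\in R_r}B(x,\sigma r/2)\Big)\le\mu\Big(\mathbb R^n\setminus\bigcup_i S_i\Big)\text{-type quantities near }S,
\]
which can be made small using the absolute continuity of $\mu$ together with $\mu(\mathbb R^n)<\infty$ (restricting $\mu$ to a neighbourhood of the compact $S$). Finally, let $\varepsilon\to0$ and then $\sigma\to0$.
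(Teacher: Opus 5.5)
Your proposal is correct and is essentially the paper's own argument, namely the proof of its generalisation in the subsection on the case $k+m-n>0$, specialised to $k=n-1$. The pieces $S_1,\dots,S_N$ are handled by Lemma~\ref{krect} for a slightly fattened full-dimensional body together with Lemma~\ref{continuity}. The part of $S$ at distance of order $r$ from $\bigcup_{i\le N}S_i$ is covered via Lemma~\ref{bes} by small balls missing $\bigcup_{i\le N}S_i$, and is estimated by slicing along $L$ and the relative AFP-condition. The $\mu$-measure of these balls is controlled by continuity from above of the finite measure $\mu$ on $(S\oplus B(0,\sigma r/2))\setminus\bigcup_{i\le N}S_i$, which decreases to $S\setminus\bigcup_{i\le N}S_i$; this is the precise form of your somewhat vague ``quantities near $S$''.

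The differences are cosmetic. You use the Euclidean buffer $B(0,\sigma r)$ and decouple the limits: $N$ is chosen after $\sigma$, then $\varepsilon\to0$, then $\sigma\to0$, which is needed because your constant grows like $\sigma^{1-m}$. The paper instead uses the anisotropic buffer $\kappa rC^{\varepsilon}$ with the coupling $\kappa=\varepsilon^{1/(k+m-n+1)}$. Your version has the small advantage of treating $\dim C<m$ and $m=1$ directly, without the paper's extra approximation step and separate Fubini argument.
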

Let us introduce the \emph{AFP-$k$-condition relative to $L$} which combines the previous two ideas.

\begin{definition}[AFP-$k$-condition relative to $L$]
    Let $S$ be a closed subset of $\mathbb{R}^n$, and let $L$ be an $m$-dimensional subspace of $\mathbb{R}^n$. We say that the set $S$ satisfies the \emph{AFP-$k$-condition relative to $L$} provided that there exist a constant $\gamma>0$ and Radon measure $\mu$, absolutely continuous with respect to $\mathcal{H}^k$, such that $$\mu\big(B(x, r)\big)\geq\gamma r^{k+m-n}\mathcal{H}^{n-m}\Big(P_{L^{\perp}}\big(S\cap B(x, r)\big)\Big),\quad\text{$x\in S$, $r\in(0, 1)$.}$$
\end{definition}

Our current objective is to prove that if the AFP-$k$-condition relative to an $m$-dimensional subspace $L$ is satisfied by a countably $\mathcal{H}^k$-rectifiable compact set $S$ with finite $\mathcal{H}^k$-measure, where $m+k-n>0$, then for any $C\in\mathcal{C}^n$ satisfying $C\subseteq L$, we have $$\mathcal{M}^k_{C}(S)=\frac{1}{\omega_{n-k}}
            \int\limits_{S}\mathcal{H}^{n-k}(C_x)\dH{k}(x).$$
            However, we prove something more general: We replace the compactness of $S$ by the 
            closedness of $S$ and finiteness of $\mu$. This gives us the result for a broader class of sets, since if $m<n$, then the AFP-$k$-condition relative to $L$ together with $\mu(\mathbb{R}^n)<\infty$ does not imply the total boundedness of $S$, as the following example shows.

\begin{example}
    Let $S\coloneqq\mathbb{R}^2\times\{0\}\subseteq\mathbb{R}^3$, and let $L\coloneqq S$. Then $S$ is countably $\mathcal{H}^2$-rectifiable,  and for every $x\in S$, we have $$\mathcal{H}^{1}\Big(P_{L^{\perp}}\big(S\cap B(x, r)\big)\Big)=\mathcal{H}^{1}(\{0\})=0,\qquad r>0.$$ So, we may choose $\mu\coloneqq0$. However, $S$ is not bounded.
\end{example}

We first treat the case \(m=n\). To confirm this, we use the following result:
\begin{theorem}[\protect{\cite[Theorem 4.4]{frys2}}]\label{s}
     Let $S$ be compact, $C\in\mathcal{C}^{n, n}$ and $s\in[0, n]$. Suppose that there exists a countable pairwise disjoint family $\{S_k\}_{k\in\mathbb{N}}$ of compact subsets of $S$ such that $\mathcal{M}_C^s(S_k)$ exists for any $k\in\mathbb{N}$ and  $$\mathcal{H}^s\Big(S\setminus\bigcup_{k\in\mathbb{N}}S_k\Big)=0,$$ and there is a positive Radon measure $\mu$ on $\mathbb{R}^n$ which is absolutely continuous with respect to $\mathcal{H}^s$, so that for some $\gamma>0$ and for each $x\in S$ and any $t\in(0, 1)$ we have $$\mu\big(B(x, t)\big)\geq\gamma t^{s}. $$ Then $\mathcal{M}_C^s(S)$ exists and $$\mathcal{M}_C^s(S)=\sum_{k\in\mathbb{N}}\mathcal{M}_C^s(S_k).$$
\end{theorem}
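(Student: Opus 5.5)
The plan is to prove $\liminf\ge\sum_k$ and $\limsup\le\sum_k$ separately. The lower bound follows from disjointness of the pieces. The upper bound uses the AFP-type density estimate to control the part of $S$ not covered by finitely many pieces, and the delicate point is to handle the part of that remainder lying very close to the chosen pieces.

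\textbf{Lower bound.} Fix $N$ and put $F_N\coloneqq S_1\cup\dots\cup S_N$. Since $S_1,\dots,S_N$ are pairwise disjoint compact sets, $S_i\oplus rC$, $i\le N$, are pairwise disjoint for all small $r>0$. Hence
\[
\mathcal M^s_{r,C}(S)\ge\sum_{k\le N}\mathcal M^s_{r,C}(S_k).
\]
Taking $\liminf_{r\to0_+}$ and using the existence of each $\mathcal M^s_C(S_k)$ gives $\mathcal M^s_C(S)_*\ge\sum_{k\le N}\mathcal M^s_C(S_k)$. Letting $N\to\infty$ gives the lower bound. The upper bound below will show in particular that the series is finite.

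\textbf{Preparations for the upper bound.} Choose $\alpha,\beta>0$ with $B(0,\alpha)\subseteq C\subseteq B(0,\beta)$. Replace $\mu$ by its restriction to $S\oplus B(0,1)$; it is then finite and still absolutely continuous with respect to $\mathcal H^s$ (Remark~\ref{pp15}). Fix $\tau\in(0,1)$ and $N$, and write $E\coloneqq S\setminus F_N$. For small $r$ split $E$ into
\[
E^{\rm near}_r\coloneqq E\cap\big(F_N\oplus B(0,\tau r)\big),\qquad E^{\rm far}_r\coloneqq E\setminus E^{\rm near}_r .
\]
Since $B(0,\tau r)\subseteq(\tau/\alpha)rC$, the near part satisfies $E^{\rm near}_r\oplus rC\subseteq F_N\oplus(1+\tau/\alpha)rC$. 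For small $r$ the sets $S_k\oplus(1+\tau/\alpha)rC$, $k\le N$, are disjoint. Therefore
\[
\limsup_{r\to0_+}\frac{\lambda^n\big((F_N\cup E^{\rm near}_r)\oplus rC\big)}{\omega_{n-s}r^{n-s}}\le(1+\tau/\alpha)^{n-s}\sum_{k\le N}\mathcal M^s_C(S_k).
\]

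\textbf{Far part (the main obstacle).} This is where the AFP condition enters, and the key is to keep the $\mu$-mass away from $F_N$. Apply Lemma~\ref{bes} to $E^{\rm far}_r$ with radius $\rho=\tau r/2$. This gives centres $x\in E^{\rm far}_r\subseteq S$ such that:
\begin{itemize}
\item the balls $B(x,\rho)$ cover $E^{\rm far}_r$ with overlap at most $3^n$;
\item $E^{\rm far}_r\oplus rC\subseteq\bigcup_x B(x,\rho+\beta r)$.
\end{itemize}
By the AFP condition, each $\mu(B(x,\rho))\ge\gamma\rho^s$. Combined with the bounded overlap, the number of centres is at most $3^n\mu(U_r)/(\gamma\rho^s)$, where
\[
U_r\coloneqq\bigcup_x B(x,\rho)\subseteq\big(S\oplus B(0,\tau r/2)\big)\setminus F_N .
\]
The last inclusion holds because $\operatorname{dist}(x,F_N)\ge\tau r$. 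Summing the volumes of the balls of radius $\rho+\beta r$ gives
\[
\lambda^n(E^{\rm far}_r\oplus rC)\le c(\tau,\beta,\gamma,n,s)\,r^{n-s}\,\mu\Big(\big(S\oplus B(0,\tau r/2)\big)\setminus F_N\Big).
\]
As $r\to0_+$ these sets decrease to $S\setminus F_N$ (as $S$ is closed), so by finiteness of $\mu$ the $\limsup$ of the far contribution is at most $c(\tau)\,\mu(S\setminus F_N)$.

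\textbf{Conclusion.} Combining the two parts,
\[
\mathcal M^s_C(S)^*\le(1+\tau/\alpha)^{n-s}\sum_{k\in\mathbb N}\mathcal M^s_C(S_k)+c(\tau)\,\mu(S\setminus F_N).
\]
First let $N\to\infty$ with $\tau$ fixed. Then $\mu(S\setminus F_N)\to\mu\big(S\setminus\bigcup_kS_k\big)=0$, because $\mathcal H^s\big(S\setminus\bigcup_kS_k\big)=0$ and $\mu\ll\mathcal H^s$. Then let $\tau\to0_+$ to obtain $\mathcal M^s_C(S)^*\le\sum_k\mathcal M^s_C(S_k)$. Together with the lower bound, this gives existence of $\mathcal M^s_C(S)$ and the claimed formula. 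The order of limits ($N$ before $\tau$) is essential, since $c(\tau)$ blows up as $\tau\to0_+$.
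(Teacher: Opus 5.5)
Your argument is correct. The lower bound via eventual disjointness of $S_k\oplus rC$, $k\le N$, is sound. So is the upper bound via the near/far splitting, the covering from Lemma~\ref{bes} with radius $\tau r/2$, the AFP density estimate, and continuity from above of the restricted finite measure $\mu$, with $N\to\infty$ taken before $\tau\to0_+$ to absorb the blow-up of $c(\tau)$.

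The paper does not reprove this statement; it quotes it from \cite{frys2}. Your scheme is essentially the one the paper itself uses in the theorem for the case $k+m-n>0$ and in Theorem~\ref{theoremExistence}. There the two parameters are coupled differently: the number of pieces is chosen so that the residual $\mu$-mass (respectively, residual volume) is below a power of $\varepsilon$ that compensates the factor coming from the small covering radius.
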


Note that the compactness of $S$ in Theorem \ref{s} can be replaced with the closedness of $S$ and finiteness of $\mu$, similarly as in Remark \ref{pp15}. 

\begin{theorem}\label{sq}
    Let $C\in\mathcal{C}^{n}$ be arbitrary, and let $S\subseteq\mathbb{R}^n$ be a countably $\mathcal{H}^k$-rectifiable closed set satisfying the AFP-$k$-condition for some finite Radon measure $\mu$, where $k\in\{1, \dots, n-1\}.$ Then $$\mathcal{M}^k_{C}(S)=\frac{1}{\omega_{n-k}}
            \int\limits_{S}\mathcal{H}^{n-k}(C_x)\dH{k}(x).$$
    \begin{proof}
        If $\dim(C)=n,$ the claim follows directly from Remark \ref{remark}, Lemma \ref{krectif} and Theorem~\ref{s}.
        
If $\dim(C)<n$, we construct a sequence of full-dimensional convex bodies 
$\{C^i\}_{i\in\mathbb{N}}$ such that $C^i \supseteq C$ for all $i\in\mathbb{N}$ and 
$C^i \xrightarrow[i\to\infty]{d_H} C$.

Using Proposition~\ref{LB} and the already established equality for full-dimensional 
convex bodies, we obtain the following chain of inequalities:
\begin{equation*}
    \begin{split}
        \frac{1}{\omega_{n-k}}
            \int\limits_{S}\mathcal{H}^{n-k}(C_x)\dH{k}(x)
        &\leq \mathcal{M}^k_C(S)_*
        \leq \mathcal{M}^k_C(S)^*
        \leq \mathcal{M}^k_{C^i}(S)\\&= \frac{1}{\omega_{n-k}}
            \int\limits_{S}\mathcal{H}^{n-k}(C^i_x)\dH{k}(x),
    \end{split}
\end{equation*}
which, together with Lemma~\ref{continuity} and the fact that $\mathcal{H}^k(S)<\infty$, following from Remark \ref{pp15}, implies the conclusion.
    \end{proof}
\end{theorem}
 
As a consequence of the previous theorem and Remark \ref{rema}, we obtain the following result.
\begin{theorem}
    Let $C\in\mathcal{C}^n$ be arbitrary, and let $S\subseteq\mathbb{R}^n$ be a $k$-rectifiable compact set, where $k\in\{1, \dots, n-1\}.$ Then $$\mathcal{M}^k_{C}(S)=\frac{1}{\omega_{n-k}}
            \int\limits_{S}\mathcal{H}^{n-k}(C_x)\dH{k}(x).$$
\end{theorem}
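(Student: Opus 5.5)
The plan is to reduce the statement directly to Theorem~\ref{sq}. For that, we need two things: $S$ is countably $\mathcal{H}^k$-rectifiable and closed, and $S$ satisfies the AFP-$k$-condition with a \emph{finite} Radon measure $\mu$.

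First, $S$ is compact, hence closed. By definition there are a bounded set $F\subseteq\mathbb{R}^k$ and a Lipschitz map $f\colon F\to\mathbb{R}^n$ with $f(F)=S$. Extending $f$ to a Lipschitz map on all of $\mathbb{R}^k$ (Kirszbraun, or \cite[Theorem~2.10.43]{federer}) shows that $S$ is countably $\mathcal{H}^k$-rectifiable, as already noted after the definition of rectifiability. Moreover, $\mathcal{H}^k(S)\leq \textup{Lip}(f)^k\mathcal{H}^k(F)<\infty$.

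Second, by Remark~\ref{rema}, the compact $k$-rectifiable set $S$ satisfies the AFP-$k$-condition for some Radon measure $\mu\ll\mathcal{H}^k$ and some $\gamma>0$. Since $S$ is compact, Remark~\ref{pp15} lets us replace $\mu$ by $\mu|_{S\oplus B(0,1)}$. This restricted measure is finite, because $S\oplus B(0,1)$ is compact and $\mu$ is Radon. It is still absolutely continuous with respect to $\mathcal{H}^k$. It also still satisfies $\mu(B(x,r))\geq\gamma r^k$ for $x\in S$ and $r\in(0,1)$, since $B(x,r)\subseteq S\oplus B(0,1)$.

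Theorem~\ref{sq} then applies for every $C\in\mathcal{C}^n$ and yields the claimed formula. Recall how that theorem works: the full-dimensional case uses Remark~\ref{remark}, Lemma~\ref{krectif} and Theorem~\ref{s}, and the lower-dimensional case uses approximation by $C^i\supseteq C$ together with Proposition~\ref{LB} and Lemma~\ref{continuity}.

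The only point that needs care is the cited Remark~\ref{rema} itself, i.e.\ constructing $\mu$ for a Lipschitz image. If one wanted to verify it directly, the natural choice is $\mu=\mathcal{H}^k|_S$ combined with a lower density bound. This bound is not automatic for Lipschitz images because of possible thin parts, which is why the cited construction, which uses the parametrization, is needed. We treat this as given, so the proof is a short combination of the earlier results.
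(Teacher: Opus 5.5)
Your proposal is correct and follows the paper, which obtains this theorem directly from Theorem~\ref{sq} together with Remark~\ref{rema}. Your check that $\mu$ can be taken finite, by restricting it to $S\oplus B(0,1)$ as in Remark~\ref{pp15}, spells out a step the paper leaves implicit.
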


\medskip

For an arbitrary lower-dimensional structuring element $C\in\mathcal{C}^n$ and for a countably $\mathcal{H}^k$-rectifiable closed set $S$, we proceed analogously to the proof of \cite[Theorem 8]{Rataj5}. 

\subsection{The Case $k+m-n>0$}
The proof of the following theorem is inspired by \cite[Theorem 8]{Rataj5}.
\begin{theorem}
        Let $S\subseteq\mathbb{R}^n$ be a closed countably $\mathcal{H}^k$-rectifiable set with finite $\mathcal{H}^k$-measure satisfying the AFP-$k$-condition relative to an $m$-dimensional linear subspace $L\subseteq\mathbb{R}^n$ with finite measure $\mu$, where $k\in\{1, \dots, n-1\}$ and $k+m-n>0$, that is, there exist a constant $\gamma>0$ and a finite Radon measure $\mu$, absolutely continuous with respect to $\mathcal{H}^k$, such that $$\mu\big(B(x, r)\big)\geq\gamma r^{k+m-n}\mathcal{H}^{n-m}\Big(P_{L^{\perp}}\big(S\cap B(x, r)\big)\Big),\quad\text{$x\in S$, $r\in (0, 1)$.}$$ Then, for any $C\in\mathcal{C}^n$ with $C\subseteq L$, we have \begin{align*}
            \mathcal{M}^k_{C}(S)=\frac{1}{\omega_{n-k}}
            \int\limits_{S}\mathcal{H}^{n-k}(C_x)\dH{k}(x).
        \end{align*}
        \begin{proof}
            Due to Proposition \ref{LB}, it suffices to show $$\mathcal{M}^k_{C}(S)^*\leq\frac{1}{\omega_{n-k}}
            \int\limits_{S}\mathcal{H}^{n-k}(C_x)\dH{k}(x).$$
            
            Assume that the dimension of $C$ is exactly $m$. Further, choose positive constants $\alpha$ and $\beta$ such that $B_L(0, \alpha)\subseteq C\cap (-C)$ and $C\subseteq B_L(0, \beta).$

            Let $\varepsilon\in (0, 1)$ and set $$C^{\varepsilon}\coloneq C\oplus B_{L^{\perp}}(0, \alpha\varepsilon).$$

            Let $\{S_i\}_{i=1}^{\infty}$ be a sequence of pairwise disjoint compact
subsets of $S$ such that each $S_i$ is contained in a $\mathcal C^1$
$k$-graph and
\[
\mathcal H^k\Big(S\setminus\bigcup_{i=1}^{\infty}S_i\Big)=0.
\]
The absolute continuity of $\mu$ with respect to $\mathcal{H}^k$ ensures that
\[
\mu\Big(S\setminus\bigcup_{i=1}^{\infty}S_i\Big)=0.
\] 

By the continuity from above and the finiteness of $\mu$, we have that for this $\varepsilon>0$, there exists some $M\in\mathbb{N}$ such that $$\mu\Big(S\setminus\bigcup_{i=1}^{M}S_i\Big)<\varepsilon^{k+m-n+1}.$$

Set $$\kappa\coloneqq\varepsilon^{1/(k+m-n+1)}$$, 
\[
A(r)\coloneqq
\bigl(S\oplus(-\kappa rC^\varepsilon)\bigr)
\setminus\bigcup_{i=1}^M S_i, \qquad r>0.
\] and $$S_{\varepsilon, r}\coloneq S\setminus \Big(\bigcup_{i=1}^{M}S_i\oplus \kappa rC^{\varepsilon}\Big),\qquad r>0.$$ The sets \(A(r)\) decrease as \(r\to0_+\). Since \(S\) is closed and
\(C^\varepsilon\) contains the origin in its interior,
\[
\bigcap_{r>0}A(r)
=
S\setminus\bigcup_{i=1}^M S_i.
\] The continuity from above
of the finite measure \(\mu\) implies that there exists $0<r_{\varepsilon}<(\varepsilon\alpha\kappa)^{-1}$ such that $$\mu\Big(\big(S\oplus (-\kappa r_{\varepsilon}C^{\varepsilon})\big)\setminus \bigcup_{i=1}^M S_i\Big)<\varepsilon^{k+m-n+1}.$$ 

Consequently, $$\mu\Big(\big(S\oplus (-\kappa rC^{\varepsilon})\big)\setminus \bigcup_{i=1}^M S_i\Big)<\varepsilon^{k+m-n+1},\qquad 0<r<r_{\varepsilon}.$$

      By Lemma \ref{bes}, there exists an at most countable set $I_{\varepsilon, r}\subseteq S_{\varepsilon, r}$ such that \begin{align}\label{N}
              S_{\varepsilon, r}\subseteq\bigcup_{x\in I_{\varepsilon, r}} B(x, \varepsilon\alpha\kappa r)&&\text{and}&&\sum_{x\in I_{\varepsilon, r}}\chi_{B(x, \varepsilon\alpha\kappa r)}\leq 3^n.
      \end{align} 
      
        The AFP-$k$-condition relative to $L$ and \eqref{N} give us \begin{equation}\label{eq7}
            \begin{split}
                \sum_{x\in I_{\varepsilon, r}}&\gamma(\varepsilon\alpha\kappa r)^{k+m-n}\mathcal{H}^{n-m}\Big(P_{L^{\perp}}\big(S\cap B(x, \varepsilon\alpha\kappa r)\big)\Big)\\&\leq\sum_{x\in I_{\varepsilon, r}}\mu\big(B(x, \varepsilon\alpha\kappa r)\big)\\&\leq 3^n\mu\Big(\bigcup_{x\in I_{\varepsilon, r}}B(x, \varepsilon\alpha\kappa r)\Big).\end{split}
        \end{equation}
        Observe $$\big(S_{\varepsilon, r}\oplus (-\kappa r C^{\varepsilon})\big)\cap\bigcup_{i=1}^M S_i=\emptyset.$$ Indeed, if $x\in s -\kappa r C^{\varepsilon}$ for some $s\in S_{\varepsilon, r}$ and, at the same time, $x\in\bigcup_{i=1}^M S_i$, we have $s\in \bigcup_{i=1}^M S_i\oplus \kappa r C^{\varepsilon}$, which contradicts the fact that $s\in S_{\varepsilon, r}.$

        Furthermore, since it holds that $$B(0, \alpha)\subseteq B_L(0, \alpha)\oplus B_{L^{\perp}}(0, \alpha)\subseteq-\frac{1}{\varepsilon}C\oplus \frac{1}{\varepsilon}B_{L^{\perp}}(0, \varepsilon\alpha)\subseteq -\frac{1}{\varepsilon}C^{\varepsilon},$$
we have \begin{equation*}
    \begin{split}
        \bigcup_{x\in I_{\varepsilon, r}}B(x, \varepsilon\alpha\kappa r)&\subseteq \big(S_{\varepsilon, r}\oplus (-\kappa rC^{\varepsilon})\big)\setminus\bigcup_{i=1}^M S_i\\&\subseteq \big(S\oplus (-\kappa rC^{\varepsilon})\big)\setminus\bigcup_{i=1}^M S_i,
    \end{split}
\end{equation*}
which coupled with \eqref{eq7} yields \begin{equation*}\begin{split}
\sum_{x\in I_{\varepsilon, r}}&\gamma(\varepsilon\alpha\kappa r)^{k+m-n}\mathcal{H}^{n-m}\Big(P_{L^{\perp}}\big(S\cap B(x, \varepsilon\alpha\kappa r)\big)\Big)\\&\leq 3^n\mu\Big(\big(S\oplus (-\kappa r C^{\varepsilon})\big)\setminus\bigcup_{i=1}^M S_i\bigg)\\&<\varepsilon^{k+m-n+1} 3^n,\end{split}
\end{equation*} from which it follows \begin{equation}\label{odhad}
    \sum_{x\in I_{\varepsilon, r}}\mathcal{H}^{n-m}\Big(P_{L^{\perp}}\big(S\cap B(x, \varepsilon\alpha\kappa r)\big)\Big)\leq \frac{\varepsilon 3^n}{\gamma(\alpha\kappa r)^{k+m-n}}=\frac{\varepsilon^{1/(k+m-n+1)} 3^n}{\gamma\alpha^{k+m-n} r^{k+m-n}}
\end{equation}

        Using \eqref{N} and \eqref{odhad}, we can estimate \begin{equation*}
            \begin{split}
                \lambda^n(S_{\varepsilon, r}\oplus rC)&\leq\lambda^n\big(S_{\varepsilon, r}\oplus B_{L}(0, \beta r)\big)\\&\leq \sum_{x\in I_{\varepsilon, r}}\lambda^n\Big(\big(S\cap B(x, \varepsilon\alpha\kappa r)\big)\oplus B_{L}(0, \beta r)\Big)\\&\leq \sum_{x\in I_{\varepsilon, r}}\mathcal{H}^{n-m}\Big(P_{L^{\perp}}\big(S\cap B(x, \varepsilon\alpha\kappa r)\big)\Big)\omega_{m}r^m(\varepsilon\alpha\kappa+\beta)^m\\&\leq (\alpha+\beta)^m\sum_{x\in I_{\varepsilon, r}}\mathcal{H}^{n-m}\Big(P_{L^{\perp}}\big(S\cap B(x, \varepsilon\alpha\kappa r)\big)\Big)\omega_{m}r^m\\&\leq\frac{\varepsilon^{1/(k+m-n+1)}\omega_{m} r^{n-k} (\alpha+\beta)^m 3^n}{\gamma\alpha^{k+m-n}}.
            \end{split}
        \end{equation*}
Finally, we have $$S\oplus rC\subseteq (S_{\varepsilon, r}\oplus rC) \cup\big((S\setminus S_{\varepsilon, r})\oplus rC\big)\subseteq (S_{\varepsilon, r}\oplus rC) \cup\Big(\bigcup_{i=1}^M S_i\oplus\kappa r C^{\varepsilon}\oplus rC^{\varepsilon}\Big),$$ whence
\begin{equation*}
            \begin{split}
                \mathcal{M}_C^k(S)^*&\leq \limsup_{r\to0_+}\frac{\lambda^n(S_{\varepsilon, r}\oplus rC)}{\omega_{n-k}r^{n-k}}+\limsup_{r\to0_+}\frac{1}{\omega_{n-k}r^{n-k}}\lambda^n\bigg(\Big(\bigcup_{i=1}^M S_i\Big)\oplus (1+\kappa )rC^{\varepsilon}\bigg)\\&\leq\limsup_{r\to0_+}\frac{\varepsilon^{1/(k+m-n+1)}\omega_{m} (\alpha+\beta)^m 3^n}{\gamma\alpha^{k+m-n}\omega_{n-k}}+(1+\kappa)^{n-k}\sum_{i=1}^M\mathcal{M}^k_{C^{\varepsilon}}(S_i)\\&\leq\frac{\varepsilon^{1/(k+m-n+1)}\omega_{m} (\alpha+\beta)^m 3^n}{\gamma\alpha^{k+m-n}\omega_{n-k}}+(1+\kappa )^{n-k}\sum_{i=1}^{\infty}\mathcal{M}^k_{C^{\varepsilon}}(S_i)\\&=\frac{\varepsilon^{1/(k+m-n+1)}\omega_{m} (\alpha+\beta)^m 3^n}{\gamma\alpha^{k+m-n}\omega_{n-k}}\\&\quad\quad+(1+\kappa )^{n-k}\frac{1}{\omega_{n-k}}\int\limits_{S}\mathcal{H}^{n-k}(C^{\varepsilon}_x)\dH{k}(x).
            \end{split}
        \end{equation*}
        Using Lemma \ref{continuity}, we get the desired inequality $$\mathcal{M}_C^k(S)^*\leq\frac{1}{\omega_{n-k}}\int\limits_{S}\mathcal{H}^{n-k}(C_x)\dH{k}(x),$$ which completes the proof for those convex bodies $C$ with $\LO(C)=L$.

        If $\LO(C)\subsetneq L$, we consider $$C_{r}\coloneq C\oplus B_{L}(0, r),\quad r>0.$$ Using the first part of the proof, we conclude that $$\Phi_S(C)\leq\mathcal{M}^k_C(S)_*\leq\mathcal{M}^k_C(S)^*\leq\mathcal{M}^k_{C_{r}}(S)^*\leq\Phi_S(C_{r}),\quad r>0.$$ 
        Lemma \ref{continuity} gives $$\mathcal{M}^k_C(S)=\frac{1}{\omega_{n-k}}\int\limits_{S}\mathcal{H}^{n-k}(C_x)\dH{k}(x),$$ which completes the proof.
        \end{proof}\end{theorem}
\subsection{The Case $k+m-n=0$}
In this subsection, we show that the limit $\mathcal{M}^k_C(S)$ always exists and coincides with the quantity $$\frac{\mathcal{H}^{n-k}(C)}{\omega_{n-k}}\int\limits_{L^{\perp}}\mathcal{H}^{0}\big(S\cap(x+L)\big)\dH{k}(x)$$ provided that $k+m=n$. \begin{theorem}\label{k+m=n}
    Let $S\subseteq\mathbb{R}^n$ be a Borel set, and let $C\in\mathcal{C}^{n, m}$. If $k+m=n$, then $$\mathcal{M}^k_C(S)=\frac{\mathcal{H}^{n-k}(C)}{\omega_{n-k}}\int\limits_{L^{\perp}}\mathcal{H}^{0}\big(S\cap(x+L)\big)\dH{k}(x),$$ where $L=\LO(C).$
    \begin{proof}
        Fubini's theorem yields $$\mathcal{M}^k_{r, C}(S)=\frac{1}{\omega_{n-k}r^{n-k}}\int\limits_{L^{\perp}}\mathcal{H}^{n-k}\Big(\big(S\cap (x+L)\big)\oplus rC\Big)\dH{k}(x).$$

        Fatou's lemma and \eqref{0n} give \begin{equation*}
            \begin{split}
                \mathcal{M}^k_{C}(S)_*&\geq\int\limits_{L^{\perp}}\liminf_{r\to0_+}\frac{1}{\omega_{n-k}r^{n-k}}\mathcal{H}^{n-k}\Big(\big(S\cap (x+L)\big)\oplus rC\Big)\dH{k}(x)\\&=\frac{\mathcal{H}^{n-k}(C)}{\omega_{n-k}}\int\limits_{L^{\perp}}\mathcal{H}^{0}\big(S\cap (x+L)\big)\dH{k}(x).
            \end{split}
        \end{equation*} We see that if the function $$x\mapsto \mathcal{H}^{0}\big(S\cap (x+L)\big), \qquad x\in L^{\perp},$$ is not integrable, then $$\mathcal{M}^k_{C}(S)=\frac{\mathcal{H}^{n-k}(C)}{\omega_{n-k}}\int\limits_{L^{\perp}}\mathcal{H}^{0}\big(S\cap (x+L)\big)\dH{k}(x)=\infty.$$ On the other hand, if it is integrable, we can use the reverse Fatou's lemma and \eqref{0n} to obtain \begin{equation*}
            \begin{split}
                \mathcal{M}^k_{C}(S)^*&\leq\int\limits_{L^{\perp}}\limsup_{r\to0_+}\frac{1}{\omega_{n-k}r^{n-k}}\mathcal{H}^{n-k}\Big(\big(S\cap (x+L)\big)\oplus rC\Big)\dH{k}(x)\\&=\frac{\mathcal{H}^{n-k}(C)}{\omega_{n-k}}\int\limits_{L^{\perp}}\mathcal{H}^{0}\big(S\cap (x+L)\big)\dH{k}(x),
            \end{split}
        \end{equation*} since $$\mathcal{H}^{n-k}\Big(\big(S\cap (x+L)\big)\oplus rC\Big)\leq \mathcal{H}^{n-k}(rC)\mathcal{H}^0\big(S\cap (x+L)\big).$$ We get $$\mathcal{M}^k_{C}(S)=\frac{\mathcal{H}^{n-k}(C)}{\omega_{n-k}}\int\limits_{L^{\perp}}\mathcal{H}^{0}\big(S\cap (x+L)\big)\dH{k}(x),$$ as claimed.
    \end{proof}
\end{theorem}
\begin{corollary}
    Let $S\subseteq\mathbb{R}^n$ be a countably $\mathcal{H}^k$-rectifiable Borel set, and let $C\in\mathcal{C}^{n, m}$. If $k+m=n$, then $$\mathcal{M}^k_C(S)=\frac{1}{\omega_{n-k}}\int\limits_{S}\mathcal{H}^{n-k}(C_x)\dH{k}(x).$$
    \begin{proof} For $\mathcal{H}^k$-a.a. $x\in S$ we have the equality $$\mathcal{H}^{n-k}(C)J^{n-k}(P_{\textup{N}^{n-k}_xS}|_L)=\mathcal{H}^{n-k}(C_x).$$
        Theorem \ref{k+m=n} implies \begin{equation*}
            \begin{split}
                \mathcal{M}^k_C(S)&=\frac{\mathcal{H}^{n-k}(C)}{\omega_{n-k}}\int\limits_{L^{\perp}}\mathcal{H}^{0}\big(S\cap (x+L)\big)\dH{k}(x)\\&=\frac{\mathcal{H}^{n-k}(C)}{\omega_{n-k}}\int\limits_{S}J^k(P_{L^{\perp}}|_{\textup{T}^{k}_xS})\dH{k}(x)\\&=\frac{\mathcal{H}^{n-k}(C)}{\omega_{n-k}}\int\limits_{S}J^{n-k}(P_{\textup{N}^{n-k}_xS}|_L)\dH{k}(x)\\&=\frac{1}{\omega_{n-k}}\int\limits_{S}\mathcal{H}^{n-k}(C_x)\dH{k}(x),
            \end{split}
        \end{equation*} which completes the proof.
    \end{proof}
\end{corollary}
\subsection{The Case $k+m-n<0$}
We already know from the proof of Proposition \ref{LB} that if the set $S$ is countably $\mathcal{H}^k$-rectifiable, and $C\in\mathcal{C}^{n, m}$, where $k+m<n$, then $$\frac{1}{\omega_{n-k}}\int\limits_{S}\mathcal{H}^{n-k}(C_x)\dH{k}(x)=0.$$ 
The following theorem shows that the limit $\mathcal{M}_{C}^k(S)$ exists and equals $0$ as well.
\begin{theorem}
Let $S$ be a countably $\mathcal{H}^k$-rectifiable set in $\mathbb{R}^n$ and let $m\in\mathbb{N}_0$ be such that $k+m<n$. Then for every $C\in\mathcal{C}^{n, m}$, we have $$\mathcal{M}^{k}_C(S)=0.$$
    \begin{proof}
        Let us express $S$ as follows: $$S=S_0\cup\bigcup_{i=1}^{\infty}S_i,$$ where $S_0$ is $\mathcal{H}^k$-negligible and for every $i\in\mathbb N$, $S_i$ is a compact subset of a Lipschitz $k$-graph. We have $$S\oplus rC\subseteq \bigcup_{i=0}^{\infty}(S_i\oplus rC).$$

For each $i\in\mathbb N$, the set $S_i$ is a compact subset of a Lipschitz
$k$-graph, and $C$ is an $m$-dimensional convex body. Hence
$S_i\times C$ is contained in the Lipschitz image of a bounded subset of
$\mathbb R^{k+m}$, and therefore
\[
\mathcal H^{k+m}(S_i\times C)<\infty.
\]
For $i=0$, since $\mathcal H^k(S_0)=0$, the standard product estimate gives
\[
\mathcal H^{k+m}(S_0\times C)=0.
\] 

        Let $r>0$ and define $f_{r}\colon S\times C\to\mathbb{R}^n$ by $$f_{r}(x, c)\coloneq x+rc.$$ Then $f_r$ is Lipschitz and for every $i\in\mathbb{N}_0$, we have \begin{equation*}\begin{split}\mathcal{H}^{k+m}(S_i\oplus rC)&=\mathcal{H}^{k+m}\big(f_r(S_i\times C)\big)\\&\leq\textup{Lip}(f_r)^{k+m}\mathcal{H}^{k+m}(S_i\times C)\\&<\infty.\end{split}\end{equation*}

        Altogether, we get $$\lambda^n(S_i\oplus rC)=0, \qquad r>0, \, i\in\mathbb{N}_0,$$
       which gives us $$\lambda^{n}(S\oplus rC)\leq\sum_{i\in\mathbb{N}_0}\lambda^n(S_i\oplus rC)=0, \quad r>0.$$ Hence, the limit $\mathcal{M}_{C}^k(S)$ exists and coincides with $$\frac{1}{\omega_{n-k}}\int\limits_{S}\mathcal{H}^{n-k}(C_x)\dH{k}(x)=0,$$ which completes the proof.
    \end{proof}
\end{theorem}
\section{Dependence on the Choice of $C$}\label{secFurther}
In this section, we prove that if the $k$-dimensional $C$-anisotropic Minkowski content of a countably $\mathcal{H}^k$-rectifiable set $S$ exists, where $C\in\mathcal{C}^{n, n}$, and coincides with $$\frac{1}{\omega_{n-k}}\int\limits_{S}\mathcal{H}^{n-k}(C_x)\dH{k}(x),$$ then the same holds for any $C^{\prime}\in\mathcal{C}^{n, n}.$ 

Our proof adapts the argument used in \cite[Theorem 3.1]{frys}. We note, however, that \cite[Theorem 3.1]{frys} deals with \emph{localized} anisotropic Minkowski contents, while here we consider only the \emph{non-localized} case.
\begin{theorem}
\label{theoremExistence}
Let $S$ be a countably $\mathcal{H}^k$-rectifiable set, where $k\in\{1, \dots, n-1\}$, and let $C, C^{\prime}\in\mathcal{C}^{n, n}$. Then
\[
\mathcal{M}^k_C(S)=\frac{1}{\omega_{n-k}}\int\limits_{S}\mathcal{H}^{n-k}(C_x)\dH{k}(x)
\]
if and only if
\[
\mathcal{M}^k_{C^{\prime}}(S)=\frac{1}{\omega_{n-k}}\int\limits_{S}\mathcal{H}^{n-k}(C^{\prime}_x)\dH{k}(x). 
\]
\begin{proof}
Suppose that
\[
\mathcal{M}^k_C(S)=\frac{1}{\omega_{n-k}}\int\limits_{S}\mathcal{H}^{n-k}(C_x)\dH{k}(x)\qquad \big(=\Phi_S(C)\big).
\]
If $\mathcal{H}^k(S)=0$, then by Remark \ref{0inf2}, we get  $\Phi_S(C)=0$, and hence also $\mathcal{M}^k_C(S)=0$. Due to Remarks \ref{0inf} and \ref{0inf2}, we know that $\mathcal{M}^k_{C^{\prime}}(S)=\Phi_S(C^{\prime})=0$. The same strategy applies if $\mathcal{H}^k(S)=\infty$. Hence, we can assume that $0<\mathcal{H}^k(S)<\infty.$

Due to Proposition \ref{LB}, it suffices to show
\begin{align}\label{nikl}
\mathcal{M}^k_{C^{\prime}}(S)^*\leq\frac{1}{\omega_{n-k}}\int\limits_{S}\mathcal{H}^{n-k}(C^{\prime}_x)\dH{k}(x).
\end{align}

Since $S$ is countably $\mathcal{H}^{k}$-rectifiable, there exists a sequence $\{S_i\}_{i=1}^{\infty}$ of pairwise disjoint compact subsets of $S$ that covers $S$ up to an $\mathcal{H}^{k}$-negligible set, each of which is compact and contained in a $\mathcal C^1$ $k$-graph. 
Since $\mathcal{H}^k(S)>0$, we may assume that $\mathcal{H}^k(S_i)>0$ for each $i\in\mathbb{N}.$ Indeed, any set \(S_i\) with \(\mathcal H^k(S_i)=0\) may simply be
discarded and included in the negligible residual set. If only finitely
many sets remain, one of them may be decomposed, up to an
\(\mathcal H^k\)-negligible set, into countably many pairwise disjoint
compact subsets of positive \(\mathcal H^k\)-measure. This follows from
the nonatomicity and inner regularity of
\(\mathcal H^k\big|_{S_i}\). After reindexing, if necessary, the above assumption holds.

Let us prove (\ref{nikl}). Observe
\begin{equation*}
\begin{split}
\mathcal{M}^k_C(S)&=\frac{1}{\omega_{n-k}}\int\limits_{S}\mathcal{H}^{n-k}(C_x)\dH{k}(x)\\
&=\sum_{i\in\mathbb{N}}\frac{1}{\omega_{n-k}}\int\limits_{S_i}\mathcal{H}^{n-k}(C_x)\dH{k}(x)=\sum_{i\in\mathbb{N}}\mathcal{M}^k_C(S_i),
\end{split}
\end{equation*}
where the last equality follows from Lemma \ref{krectif}. For a given $\varepsilon>0$, there exists $K\in\mathbb{N}$ such that
\[
\mathcal{M}^k_C(S)-\sum_{i=1}^K\mathcal{M}^k_C(S_i)<\varepsilon^{k+1}.
\]

Since $S_1, \dots, S_K$ are pairwise disjoint nonempty compact sets, their mutual
distances are positive. Hence, for all sufficiently small $r>0$, the sets
$S_i\oplus rC$, $i=1, \dots, K$, are pairwise disjoint. Hence
\begin{equation*}
\sum_{i=1}^K\mathcal{M}_C^k(S_i)=\lim_{r\to0_+}\frac{1}{\omega_{n-k}r^{n-k}}\lambda^n\Big(\bigcup_{i=1}^K (S_i\oplus rC)\Big),
\end{equation*}
so that
\begin{equation}\label{rum}
\lim_{r\to0_+}\frac{1}{\omega_{n-k}r^{n-k}}\lambda^n\bigg((S\oplus rC)\setminus\Big(\bigcup_{i=1}^K (S_i\oplus rC)\Big)\bigg)<\varepsilon^{k+1}.
\end{equation}

Let positive constants $a$, $b$ and $c$ satisfy $B(0,a)\subseteq C$ and $B(0, c)\subseteq C^{\prime}\subseteq B(0,b)$, and define
\[
S_{\varepsilon, r}\coloneq \Big\{x\in S : \dist\Big(x, \bigcup_{i=1}^K S_i\Big)>2\diam(C)r\varepsilon \Big\}, \quad r>0.
\]

By Lemma \ref{bes}, there exists an at most countable set $I_{\varepsilon, r}\subseteq S_{\varepsilon, r}$ such that
\begin{align}\label{pp12}
    S_{\varepsilon, r}\subseteq \bigcup_{x\in I_{\varepsilon, r}} B(x, ar\varepsilon)&&\text{and}&&\sum_{x\in I_{\varepsilon, r}}\chi_{B(x, ar\varepsilon)}\leq 3^n.
\end{align}
We have \begin{equation*}
    \begin{split}
        \mathcal{H}^0(I_{\varepsilon, r})\omega_n (ar\varepsilon)^n&=\sum_{x\in I_{\varepsilon, r}}\omega_n (ar\varepsilon)^n\\&=\sum_{x\in I_{\varepsilon, r}}\lambda^n\big(B(x, ar\varepsilon)\big)\\&\leq 3^n\lambda^n\Big(\bigcup_{x\in I_{\varepsilon, r}}B(x, ar\varepsilon)\Big).
    \end{split}
\end{equation*}

Notice
\[
\bigcup_{x\in I_{\varepsilon, r}} B(x, ar\varepsilon)\subseteq S_{\varepsilon, r}\oplus B(0, ar\varepsilon) \subseteq (S\oplus r\varepsilon C) \setminus \Big(\bigcup_{i=1}^K (S_i\oplus r\varepsilon C)\Big).
\]

Inequality \eqref{rum} implies
\begin{equation}\label{limsup}\limsup_{r\to0_+}\mathcal{H}^0(I_{\varepsilon, r}) r^{k}\leq \frac{\omega_{n-k}3^n\varepsilon}{a^n\omega_n}.
\end{equation}
We have $$S\subseteq (S\setminus S_{\varepsilon, r})\cup S_{\varepsilon, r}.$$

On the one hand,
\[
S\setminus S_{\varepsilon, r}=\Big\{x\in S : \dist\Big(x, \bigcup_{i=1} ^K S_i\Big)\leq 2\diam(C)\varepsilon r\Big\},
\]
from which we see that $$S\setminus S_{\varepsilon, r}\subseteq\bigcup_{i=1}^K S_i\oplus B\big(0, 3\diam(C)\varepsilon r\big),$$ whence \begin{equation}\label{limsup2}
    \begin{split}
        \lambda^n\big((S\setminus S_{\varepsilon, r})\oplus rC^{\prime}\big)\leq \lambda^n\Big(\bigcup_{i=1}^K S_i\oplus r\big(1+c^{-1}3\diam(C)\varepsilon\big)C^{\prime}\Big)
    \end{split}
\end{equation}

On the other hand, \newcommand\myeqfkl{\mathrel{\stackrel{\makebox[0pt]{\mbox{\normalfont\tiny (\ref{pp12})}}}{\subseteq}}}
\[
S_{\varepsilon, r} \oplus rC^{\prime}\myeqfkl \bigcup_{x\in I_{\varepsilon, r}} \big(B(x, ar\varepsilon)\oplus B(0, rb)\big)=\bigcup_{x\in I_{\varepsilon, r}} B\big(x, (a\varepsilon+b)r\big).
\]
Hence, using \eqref{limsup}, we obtain
\begin{equation}\label{kvak}
\begin{split}
\limsup_{r\to0_+}\frac{\lambda^n(S_{\varepsilon, r}\oplus rC^{\prime})}{\omega_{n-k}r^{n-k}}
&\leq \limsup_{r\to0_+}\frac{1}{\omega_{n-k}r^{n-k}}\lambda^n\Big(\bigcup_{x\in I_{\varepsilon, r}} B\big(x, (a\varepsilon+b)r\big)\Big)\\&\leq\frac{\omega_n(a\varepsilon+b)^n}{\omega_{n-k}}\limsup_{r\to0_+}\mathcal{H}^0(I_{\varepsilon, r})r^k\\&\leq\frac{(a\varepsilon+b)^n3^n\varepsilon}{a^n}.
\end{split}
\end{equation}
Finally, using \eqref{limsup2}, \eqref{kvak} and Lemma \ref{krectif}, we conclude that
\[
\begin{split}
\mathcal{M}^k_{C^{\prime}}(S)^* &\leq \limsup_{r\to0_+}\frac{\lambda^n(S_{\varepsilon, r}\oplus rC^{\prime})}{\omega_{n-k}r^{n-k}} + \limsup_{r\to0_+}\frac{\lambda^n\big((S\setminus S_{\varepsilon, r})\oplus rC^{\prime}\big)}{\omega_{n-k}r^{n-k}}\\
&\leq \frac{(a\varepsilon+b)^n3^n\varepsilon}{a^n}+ \limsup_{r\to0_+}\frac{1}{\omega_{n-k}r^{n-k}}\lambda^n\Big(\bigcup_{i=1}^K S_i\oplus r\big(1+c^{-1}3\diam(C)\varepsilon\big)C^{\prime}\Big)\\
&= \frac{(a\varepsilon+b)^n3^n\varepsilon}{a^n}+\big(1+c^{-1}3\diam(C)\varepsilon\big)^{n-k}\sum_{i=1}^K\mathcal{M}^k_{C^{\prime}}(S_i)\\&\leq \frac{(a\varepsilon+b)^n3^n\varepsilon}{a^n}+\big(1+c^{-1}3\diam(C)\varepsilon\big)^{n-k}\sum_{i=1}^{\infty}\mathcal{M}^k_{C^{\prime}}(S_i)\\&\leq \frac{(a\varepsilon+b)^n3^n\varepsilon}{a^n}+\big(1+c^{-1}3\diam(C)\varepsilon\big)^{n-k}\frac{1}{\omega_{n-k}}\int\limits_{S}\mathcal{H}^{n-k}(C^{\prime}_x)\dH{k}(x).
\end{split}
\]

Since $\varepsilon>0$ was arbitrary, we conclude
\[
\mathcal{M}^k_{C^{\prime}}(S)^* \leq \frac{1}{\omega_{n-k}}\int\limits_{S}\mathcal{H}^{n-k}(C^{\prime}_x)\dH{k}(x).
\]

The converse implication follows by interchanging the roles of \(C\) and \(C'\).
\end{proof}
\end{theorem}

\begin{rem}
Theorem~\ref{theoremExistence} implies, in particular, Theorem~\ref{sq}. Indeed, the validity of the AFP-$k$-condition ensures that
\[
\mathcal{M}^k_{B(0, 1)}(S) = \mathcal{H}^k(S)
\]
as shown in \cite[Theorem~2.104]{ambrosio} and $\mathcal{H}^k(S)<\infty$ (see Remark \ref{pp15}). Hence, by Theorem~\ref{theoremExistence},
\[
\mathcal{M}^k_{C}(S)
= \frac{1}{\omega_{n-k}}\int\limits_{S}\mathcal{H}^{n-k}(C_x)\dH{k}(x)
\]
for every $C \in \mathcal{C}^{n,n}$. By the standard approximation argument, we extend this to arbitrary convex body $C\in\mathcal{C}^n.$
\end{rem}

\end{document}